\newtheorem{proposition}{Proposition}
\newtheorem{corollary}{Corollaire}
\newtheorem{affirmation}{Affirmation}
\newtheorem{lemma}{Lemme}
\newtheorem{definition}{D\'efinition}
\title[Expression de la diff\'erentielle $d_3$]
{Expression de la diff\'erentielle $d_3$ \\
de la suite spectrale de Hochschild-Serre \\ en cohomologie born\'ee
r\'eelle}
\author[A. Bouarich]
{A. Bouarich}
\address{ Universit\'e Sultan Moulay Slimane  \\ Facult\'e des Sciences et T\'echniques\\
 B.P. 523, Beni Mellal\\
 Maroc/Morocco.}
\email{bouarich1@yahoo.fr or bouarich@fstbm.ac.ma}
\keywords{Cohomology of Groups, $\ell_1$-Homology of groups,  Bounded Cohomology of groups, Spectral Sequences, Banach Spaces}
\def \hfl#1{\smash{\mathop{\hbox to 7mm{\leftarrowfill}}
\limits^{\scriptstyle#1}}} \def \rfl#1#2{\smash{\mathop{\hbox to
7mm{\rightarrowfill}} \limits^{#1}_{\scriptstyle#2}}}
\begin{document}
\maketitle

{\footnotesize{ AMS Subject Class. (2000): 20J06, 55T05, 46A22 }}

\begin{abstract}
For discrete groups, we construct two bounded cohomology classes with
coefficients in the second space of the reduced real $\ell_1$-homology.
Precisely,  we associate to any discrete group $G$ a bounded cohomology class
of degree two noted $\frak{g}_2\in H_b^2(G, \overline{H}_2^{\ell_1}(G, \mathbb
R))$. For $G$ and $\Pi$ groups and  $\theta : \Pi\rightarrow Out(G)$ any
homomorphism we associate  a bounded cohomology class of degree three noted
$[\theta]\in H_b^3(\Pi, \overline{H}_2^{\ell_1}(G, \mathbb R))$. When the outer
homomorphism $\theta : \Pi\rightarrow Out(G)$ induces an extension of $G$ by
$\Pi$ we show that the class $\frak{g}_2$ is $\Pi$-invariant and that  the
differential $d_3$ of Hochschild-Serre spectral sequence sends the class
$\frak{g}_2$ on the class $[\theta]$  : $d_3(\frak{g}_2)=[\theta]$. Moreover,
we show  that for any integer $n\geq 0$ the differential $d_3 : E_3^{n,
2}\rightarrow E_3^{n+3, 0}$ of Hochschild-Serre spectral sequence in real
bounded cohomology is given as a cup-product by the class $[\theta]$.
\end{abstract}



\section{Introduction}

\subsection{Motivation}

Soit $G$ un groupe discret et  $C_b^n(G; \mathbb R)$ l'espace vectoriel r\'eel
des $n$-cocha\^\i nes born\'ees non homog\`enes,
 $c : G^n\rightarrow \mathbb R$. La diff\'erentielle de degr\'e $n\geq 0$ d'une
 $n$-cocha\^\i ne $c$ est d\'efinie par,
\begin{enumerate}
\item Pour $n\geq 1$ et pour tout $(g_0, g_1, \dots, g_n)\in G^{n+1}$ on
    pose,
\begin{eqnarray}
 d_nc(g_0, g_1, \dots, g_n) &=& c(g_1, \dots, g_n)  +
 \sum_{i=1}^{i=n}(-1)^ic(g_0, g_1, \dots, g_{i-1}g_i, g_{i+1}, \dots, g_n)
 \nonumber\\
&+& (-1)^{n-1}c(g_0, g_1, \dots, g_{n-1})\nonumber
\end{eqnarray}

\item $d_0 : C_b^0(G, \mathbb R) \rightarrow \mathbb R$ est l'application
    nulle.
\end{enumerate}

L'homologie du complexe diff\'erentiel $(C_b^*(G; \mathbb R), d_*)$ s'appelle
la cohomologie born\'ee r\'eelle du groupe $G$ au sens de Gromov (cf. \cite{I})
et est not\'ee  $H_b^*(G, \mathbb R)$.

Dans \cite{Bou} et \cite{Bou1}, en adaptant au contexte de cohomologie born\'ee
r\'eelle  les m\'ethodes de construction du deuxi\`eme et du troisi\`eme groupe
de cohomologie ordinaire d'un groupe $G$
 \`a la donn\'ee d'une extension de
groupes discrets $1 \rightarrow
G\stackrel{i}{\longrightarrow}\Gamma\stackrel{\sigma}{\longrightarrow}
\Pi\rightarrow 1$ nous avons associ\'e la suite exacte \`a quatre termes,
\begin{eqnarray}
0\longrightarrow H_b^2(\Pi, \mathbb R)\stackrel{\sigma_b}{\longrightarrow}H_b^2(\Gamma, \mathbb R)
\stackrel{i_b}{\longrightarrow} H_b^2(G, \mathbb R)^\Pi\stackrel{\delta}{\longrightarrow} H_b^3(\Pi, \mathbb R)
\end{eqnarray}
dans laquelle l'homomorphisme $H_b^2(G, \mathbb
R)^\Pi\stackrel{\delta}{\longrightarrow} H_b^3(\Pi, \mathbb R)$ s'appelle
op\'erateur de transgression, et o\`u $H_b^2(G, \mathbb R)^\Pi$ d\'esigne le
sous-espace des classes de cohomologie born\'ee r\'eelle de degr\'e deux
invariantes par l'action du groupe $\Pi$ qui est induite par la
repr\'esentation ext\'erieure $\theta : \Pi\longrightarrow Out(G)$ associ\'ee
\`a l'extension  $1 \rightarrow
G\stackrel{i}{\longrightarrow}\Gamma\stackrel{\sigma}{\longrightarrow}
\Pi\rightarrow 1$.

La suite exacte $(1)$ sugg\`ere qu'il existe une th\'eorie des suites spectrales en cohomologie born\'ee.
En effet, A. Noskov \cite{N1} (Voir aussi N. Monod et M. Burger \cite{Mo1}) a prouv\'e qu'on peut associer \`a une extension  de groupes discrets $1 \rightarrow
G\stackrel{i}{\longrightarrow}\Gamma\stackrel{\sigma}{\longrightarrow}
\Pi\rightarrow 1$ une suite spectrale de Hochschild-Serre en cohomologie born\'ee r\'eelle
$(E_{_{r}}^{^{p, q}}, d_r)$ qui converge vers la cohomologie born\'ee r\'eelle du groupe $\Gamma$. Cependant,  pour expliciter le second terme $E_2^{p, q}$,  A. Noskov    a stipul\'e dans \cite{N1}   que les espaces de  cohomologie born\'ee $H_b^q(G, \mathbb R)$ soient des espaces de Banach (voir aussi \cite{Mo1} cf. pr. 4.2.2 p. 264), or cette hypoth\`ese n'est pas toujours remplie  quand la dimension de l'espace vectoriel r\'eelle $H_b^q(G, \mathbb R)$ est infinie \cite{So}.

Dans \cite{Bou3}, pour contourner l'hypoth\`ese demand\'ee par \cite{N1} et \cite{Mo1},  nous nous sommes plac\'e dans la cat\'egorie des espaces vectoriels r\'eels semi-norm\'es pour prouver que tout complexe diff\'erentiel $(K^*, d_*)$ qui est muni d'une filtration positive d\'ecroisante r\'eguli\`ere induit une suite spectrale convergente $(E_r^{*,*}, d_r^{*,*})$ dont les termes sont des espaces vectoriels semi-norm\'es identifi\'es \`a une bijection lin\'eaire continue pr\`es (cf. 3.2.1).  Ainsi, par exemple,  \`a une extension de groupes discrets  $1 \longrightarrow G\stackrel{i}{\longrightarrow}\Gamma\stackrel{\sigma}{\longrightarrow} \Pi\longrightarrow 1$ nous pouvons  associer une suite spectrale de Hochschild-Serre $(E_{_{r}}^{^{p, q}},
d_r)$ dont les termes sont des espaces vectoriels semi-norm\'es et qui converge  vers la cohomologie born\'ee r\'eelle $H_b^{p+q}(\Gamma, \mathbb R)$. De plus, il existe une bijection canonique continue, non  n\'ec\'essairement bicontinue, qui est d\'efinie sur le second terme $E_2^{p, q}$  \`a valeurs dans l'espace vectoriel  r\'eel semi-norm\'e $H_b^p(\Pi, H_b^q(G, \mathbb R))$ de la cohomologie born\'ee avec coefficients ;  ceci  m\^eme si  l'espace vectoriel semi-norm\'e $H_b^q(G, \mathbb R)$ n'est pas s\'epar\'e (cf. 3.2.2).

En cons\'equence de ce r\'esultat nous pouvons   associer \`a toute extension
de groupes discrets  $1 \longrightarrow G\stackrel{i}{\longrightarrow}\Gamma
\stackrel{\sigma}{\longrightarrow} \Pi\longrightarrow 1$ la suite exacte \`a
cinq termes (cf. \cite{N1}, \cite{Mo1} et \cite{Bou3}) :
\begin{eqnarray}
0\longrightarrow H_b^2(\Pi, \mathbb R)\stackrel{\sigma_b}{\longrightarrow}H_b^2(\Gamma, \mathbb R)
\stackrel{i_b}{\longrightarrow} H_b^2(G, \mathbb R)^\Pi\stackrel{d_3}{\longrightarrow} H_b^3(\Pi,
\mathbb R)\stackrel{\sigma_b}{\longrightarrow} H_b^3(\Gamma, \mathbb R)
\end{eqnarray}
 qui diff\`ere de la suite exacte $(1)$  par le
terme suppl\'ementaire $H_b^3(\Gamma, \mathbb R)$ et  au lieu de l'op\'erateur
de transgression $\delta : H_b^2(G, \mathbb R)^\Pi\longrightarrow H_b^3(\Pi,
\mathbb R)$ nous avons la diff\'erentielle $d_{_3} : E_{_3}^{^{0, 2}}=H_b^2(G,
\mathbb R)^\Pi\longrightarrow E_{_3}^{^{3, 0}}=H_b^3(\Pi, \mathbb R)$. Ainsi,
suite \`a ces remarques, on se propose dans ce  travail de comparer les deux
op\'erateurs  $\delta : E_{_3}^{^{0, 2}}\longrightarrow E_{_3}^{^{3, 0}}$ et
$d_{_3} : E_{_3}^{^{0, 2}}\longrightarrow E_{_3}^{^{3, 0}}$ en suivant le plan
que nous d\'ecrirons dans le prochain paragraphe.

\subsection{Pr\'esentation des r\'esultats}

Dans la section 2, nous  \'etudions la notion d'homologie $\ell_1$ d'un groupe
discret $G$ \`a coefficients dans un $G$-module de Banach $V$. Les espaces
d'homologie $\ell_1$ seront not\'es $H_*^{\ell_1}(G, V)$ tandis que les espaces
 de Banach d'homologie $\ell_1$-r\'eduite  seront not\'es  $\overline{H}_*^{\ell_1}(G, V)$.

Afin de rendre le contenu de l'article auto suffisant,  nous allons consacrer la section 3  \`a un bref rappel sur quelques \'el\'ements de la cohomologie born\'ee utiles pour ce travail. Plus pr\'ecis\'ement, nous  rappelons la notion de la cohomologie
born\'ee d'un groupe discret \`a coefficients dans un module de Banach $V$ (cf.
\cite{Bou3}, \cite{I} et \cite{N}), nous d\'ecrivons la construction des termes d'une  suite sp\'ectrale $(E_r^{p, q}, d_r^{p, q})$  et nous expliquerons  aussi la
relation entre les quasi-morphismes et les $2$-cocycles born\'es r\'eels. Ensuite, nous d\'emontrerons notre premier
r\'esultat principal~:

\newtheorem*{thmprA}{Th\'eor\`eme principal A}\begin{thmprA}
Pour tout groupe discret $G$ il existe une unique classe de cohomologie
born\'ee \`a coefficients triviaux not\'ee,
 $\mathbf{g}_{_{2}}\in H_b^2(G, \overline{H}_2^{\ell_1}(G, \mathbb R))$,  qui poss\`ede les deux propri\'et\'es
  suivantes :
\begin{enumerate}
\item $\mathbf{g}_{_{2}}$ est nulle si et seulement si le second groupe de
    cohomologie born\'ee $H_b^2(G, \mathbb R)$ est nul.

\item  Pour toute classe de cohomologie born\'ee r\'eelle $x\in H_b^2(G,
    \mathbb R)$  on a la relation,
$$x\cup \mathbf{g}_{_{2}} = x
$$
o\`u le cup-produit $\cup$ est d\'efini par l'entrelacement naturel
(dualit\'e) entre les espaces de Banach $H_b^2(G, \mathbb R)$ et
$\overline{H}_2^{\ell_1}(G, \mathbb R)$.
\end{enumerate}
\end{thmprA}

Dans la section 4, \`a partir d'une repr\'esentation ext\'erieure   $\theta :
\Pi\rightarrow Out(G)$ nous construisons une classe de cohomologie born\'ee
de degr\'e trois not\'ee $[\theta]\in H_b^3(\Pi, \overline{H}_2^{\ell_1}(G,
\mathbb R))$,  o\`u l'action du groupe $\Pi$ sur $\overline{H}_2^{\ell_1}(G,
\mathbb R)$ est celle induite par la repr\'esentation ext\'erieure $\theta$.

Pour  construire la classe de cohomologie born\'ee avec coefficients,
$[\theta]\in H_b^3(\Pi, \overline{H}_2^{\ell_1}(G,  \mathbb R))$,  nous avons
adapt\'e au contexte de la cohomologie born\'ee avec coefficients les
m\'ethodes permettant la construction d'une  classe de cohomologie ordinaire de
degr\'e trois  \`a partir de  la repr\'esentation ext\'erieure $\theta :
\Pi\rightarrow Out(G)$ (cf. \cite{Ma} et \cite{Bro}).

Dans la section 5, en  supposant que la repr\'esentation ext\'erieure $\theta :
\Pi\rightarrow Out(G)$ est induite par une extension de groupes discrets $1
\rightarrow
G\stackrel{i}{\longrightarrow}\Gamma\stackrel{\sigma}{\longrightarrow}
\Pi\rightarrow 1$  nous  d\'emontrons que la classe de cohomologie born\'ee
$\mathbf{g}_{_{2}}\in H_b^2(G, \overline{H}_2^{\ell_1}(G, \mathbb R))$ est
$\Pi$-invariante. Il r\'esulte de cette invariance que $\mathbf{g}_{_{2}}$
d\'efinit un \'el\'ement de $E_{_3}^{0,2}= H_b^2(G, \overline{H}_2^{\ell_1}(G,
\mathbb R))^\Pi$. Par ailleurs, en remarquant que la classe de cohomologie $[\theta]$ d\'efinit  un
\'el\'ement du terme $E_{_3}^{3, 0}$ nous d\'emontrons  le th\'eor\`eme suivant~:

\newtheorem*{thmprC}{Th\'eor\`eme principal B}\begin{thmprC}
La diff\'erentielle $d_{_3} : E_{_3}^{0,2}\longrightarrow E_{_3}^{3, 0}$ de la
suite spectrale de Hochschild-Serre associ\'ee \`a l'extension de groupes
discrets $1 \longrightarrow
G\stackrel{i}{\longrightarrow}\Gamma\stackrel{\sigma}{\longrightarrow}
\Pi\longrightarrow 1$ en cohomologie born\'ee \`a coefficients dans le
$\Pi$-module de Banach $\overline{H}_2^{\ell_1}(G, \mathbb R)$   envoie la
classe $\mathbf{g}_{_{2}}$  sur la classe $[\theta]$.
\end{thmprC}

Ensuite, gr\^ace au r\'esultat du th\'eor\`eme principal B nous  d\'emontrons
le th\'eor\`eme   suivant qui donne l'expression explicite de la
diff\'erentielle $d_3$ en fonction de la classe de cohomologie born\'ee
$[\theta]$.

\newtheorem*{thmprD}{Th\'eor\`eme principal C}\begin{thmprD}
Soit $\theta : \Pi\longrightarrow Out(G)$ une repr\'esentation ext\'erieure
induite par une extension de groupes discrets $1 \rightarrow
G\stackrel{i}{\longrightarrow}\Gamma\stackrel{\sigma}{\longrightarrow}
\Pi\rightarrow 1$
  et soit $[\theta]\in H_b^3(\Pi,
\overline{H}_2^{\ell_1}(G, \mathbb R))$ la classe de cohomologie born\'ee
associ\'ee \`a   $\theta$.  Alors, pour tout entier $n\geq 0$ la
diff\'erentielle $d_{_{3}} : E_{_{3}}^{{n, 2}}\rightarrow E_{_{3}}^{{n+3,
0}}$ de la suite spectrale de Hochschild-Serre en cohomologie born\'ee r\'eelle
est donn\'ee par l'expression : $$d_{_{3}}(x) = (-1)^n x\cup [\theta], \qquad
\forall x\in E_{_{3}}^{{n, 2}}.$$
\end{thmprD}

En utilisant l'expression explicite de l'op\'erateur  de transgression $\delta
: H_b^2(G, \mathbb R)^\Pi\rightarrow H_b^3(\Pi, \mathbb R)$,   rappel\'ee
ci-dessous \`a la suite du  corollaire 4 de la section 4.3,  nous d\'eduisons
le~:

\newtheorem*{corol1}{Corollaire A}\begin{corol1}
L'op\'erateur de transgression $\delta : H_b^2(G, \mathbb R)^\Pi\rightarrow
H_b^3(\Pi, \mathbb R)$ associ\'e \`a  l'extension  $1 \rightarrow
G\stackrel{i}{\longrightarrow}\Gamma\stackrel{\sigma}{\longrightarrow}
\Pi\rightarrow 1$    est \'egal \`a la diff\'erentielle $d_{_{3}} :
E_{_{3}}^{{0, 2}}\rightarrow E_{_{3}}^{{3, 0}}$.
\end{corol1}

Le r\'esultat du th\'eor\`eme principal C nous permet aussi   de d\'eduire le~:

\newtheorem*{corol2}{Corollaire B}\begin{corol2}
Si la classe de cohomologie born\'ee $[\theta]\in H_b^3(\Pi,
\overline{H}_2^{\ell_1}(G, \mathbb R))$ est nulle, alors l'op\'erateur
$0\rightarrow H_b^3(\Pi, \mathbb R)\stackrel{\sigma_b}{\longrightarrow}
H_b^3(\Gamma, \mathbb R)$ est injectif.
\end{corol2}

Enfin, notons que le r\'esultat du corollaire B nous sugg\`ere les deux
questions suivantes :

\noindent{\bf Question 1 :}  La classe  $[\theta]$ est-elle une obstruction \`a
l'exactitude \`a gauche du foncteur de cohomologie born\'ee r\'eelle $H_b^3(-,
\mathbb R)$ ? C'est-\`a-dire, si un homomorphisme  surjectif $\sigma :
\Gamma\rightarrow \Pi$ induit un op\'erateur injectif
$H_b^3(\Pi, \mathbb R)\stackrel{\sigma_b}{\longrightarrow}
H_b^3(\Gamma, \mathbb R)$ ; la classe $[\theta]\in H_b^3(\Pi,
\overline{H}_2^{\ell_1}(G, \mathbb R))$ est-elle nulle ?

\noindent{\bf Question 2 :}  La classe  $[\theta]\in H_b^3(\Pi,
\overline{H}_2^{\ell_1}(G, \mathbb R))$ est-elle triviale lorsque
$\theta(\Pi)\subset Out(G)$ est moyennable ?  En effet, dans
\cite{Bou2}, nous avons d\'emontr\'e que si l'image de la repr\'esentation
ext\'erieure  $\theta : \Pi\rightarrow Out(G)$ est moyennable alors pour
tout entier $n\geq 0$ l'op\'erateur $\sigma_b : H_b^n(\Pi, \mathbb
R)\stackrel{}{\rightarrow} H_b^n(\Gamma, \mathbb R)$ est injectif.

\section{Homologie $\ell_1$ d'un groupe discret}

\subsection{$G$-modules de Banach relativement projectifs} Soient $G$ un
groupe discret et  $E$ un espace de Banach.  On dira que  $E$ est un $G$-module
de Banach s'il est muni d'une action du groupe $G$ telle que chaque \'el\'ement
$g\in G$ induit un op\'erateur lin\'eaire born\'e $g : E\rightarrow E$ de norme
$\parallel g\parallel\leq 1$. On notera par $g.v$ l'action de $g$ sur un
\'el\'ement $v$ de $E$.

Les $G$-modules de Banach constituent une cat\'egorie dont les $G$-morphismes
sont tous les op\'erateurs lin\'eaires continus  $f : E\rightarrow F$   qui
sont $G$-\'equivariants,
$$f(g\cdot x) = g\cdot f(x), \qquad \forall x\in E, \forall g\in G.$$

\`A un $G$-module de Banach $E$ on associe un sous-espace vectoriel  de
vecteurs  $G$-invariants $E^G:=\{v\in E \ ;  g\cdot v=v \ \forall g\in G\}$  et
un espace quotient de Banach  de vecteurs $G$-coinvariants $E_{_{G}} :=
\displaystyle{E\over \overline{E}(G)}$ ; o\`u $\overline{E}(G)$ d\'esigne
l'adh\'erence du sous-espace vectoriel de $E$ engendr\'e par tous les vecteurs
$g\cdot v - v $ avec $g\in G$ et $v\in E$.

Notons que si on se donne deux $G$-modules de Banach  $E$ et $F$ on  d\'efinit
une structure de $G$-module de Banach sur  leur  produit tensoriel projectif
compl\'et\'e $E\widehat{\otimes}F$ (cf. \cite{Gro}) en posant~:
$$g\cdot (x\otimes y) := g\cdot x\otimes g^{-1}\cdot y, \qquad   \forall g\in G,
x\in E, y\in F$$
L'espace de Banach des vecteurs $G$-coinvariants du $G$-module de Banach
$E\widehat{\otimes}F$ sera d\'esign\'e par l'expression,
$E\widehat{\otimes}_{_{G}}F :=(E\widehat{\otimes}F)_{_{G}}$.

Soient $E$ et $X$ deux $G$-modules de Banach. On dira qu'un  $G$-morphisme
surjectif $p: E\rightarrow X$  est  admissible s'il existe un op\'erateur
lin\'eaire continu $r \in{\mathcal L}(X, E)$,  non n\'ecessairement
$G$-\'equivariant,   tel que $p\circ r=id_{_{X}}$. De m\^eme, on dira qu'un
$G$-module de Banach $V$ est relativement projectif si pour tout $G$-morphisme
surjectif admissible $\begin{xy} (0,0)*+{E}="a"; (15, 0)*+{X}="b";
(25,0)*+{0}="o" \ar@{->}^{p} "a" ; "b" <3pt> \ar@{<-}_{r} "a"; "b" <-3pt>
\ar@{->} "b"; "o"\end{xy}$ et pour tout $G$-morphisme $\alpha : V\rightarrow X$
il existe au moins un $G$-morphisme  $\beta : V\rightarrow E$ tel que $p\circ
\beta=\alpha$,
$$\begin{xy}
(15, 15)*+{V}="v"; (0,0)*+{E}="a"; (15, 0)*+{X}="b"; (25,0)*+{0.}="o"
\ar@{->}^{p} "a" ; "b" <3pt>
\ar@{<-}_{r} "a"; "b" <-3pt>
\ar@{->} "b"; "o"
\ar@{->}^{\alpha} "v"; "b"
\ar@{<--}^{\beta} "a"; "v"
\end{xy}$$

\'Etant donn\'e un groupe discret $G$, on d\'esigne par $C_n^{\ell_1}(G,
\mathbb R)$ le compl\'et\'e de l'espace vectoriel des $n$-cha\^\i nes r\'eelles
$C_n(G, \mathbb R):=\mathbb R[G^n]$ qui est engendr\'e par les \'el\'ements de
l'ensemble $G^n$ et est muni par la norme $\ell^1$,
\begin{eqnarray}
\textrm{ si } \quad z = \sum_{i=1}^{i=m}a_i(g_1^i, \cdots,
g_n^i)\in\mathbb R[G^n] \quad \textrm{ on pose } \quad \parallel
z\parallel_1 = \sum_{i=1}^{i=m}\mid a_i\mid \in\mathbb R^+.
\end{eqnarray}

Sur l'espace de Banach $C_n^{\ell_1}(G, \mathbb R)$ nous avons une structure de
$G$-module de Banach naturelle d\'efinie par la $G$-action suivante, $$g\cdot
(g_1, \cdots, g_n) :=(gg_1, \cdots, gg_n), \quad \forall g, g_1, \cdots, g_n\in
G$$

\begin{lemma} Pour tout groupe discret $G$ et pour tout  $G$-module
de Banach $V$,   le produit tensoriel projectif compl\'et\'e $C_n^{\ell_1}(G,
\mathbb R)\widehat{\otimes}V$ est un $G$-module de Banach relativement
projectif.
\end{lemma}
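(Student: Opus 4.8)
The plan is to exploit the freeness of the $G$-action on $G^n$ in order to identify $C_n^{\ell_1}(G,\mathbb R)\widehat{\otimes}V$, up to an isometric $G$-isomorphism, with a \emph{free} Banach $G$-module of the form $\ell^1(G)\widehat{\otimes}W$, for which an equivariant lift can be written by hand; such free modules are the prototypical relatively projective objects. Indeed the action $g\cdot(g_1,\dots,g_n)=(gg_1,\dots,gg_n)$ on $G^n$ is free, and each orbit contains a unique representative whose first entry is $e$. Writing $g_1$ for the first entry of $\gamma=(g_1,\dots,g_n)$ and $\bar\gamma=(e,g_1^{-1}g_2,\dots,g_1^{-1}g_n)$ for that representative, one has $\gamma=g_1\cdot\bar\gamma$ with $g_1$ and $\bar\gamma$ uniquely determined by $\gamma$. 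Passing to $\ell^1$-completions this gives a $G$-equivariant identification $\ell^1(G^n)\cong\ell^1(G)\widehat{\otimes}\ell^1(G\backslash G^n)$, the left regular representation on the first factor and the trivial action on the second; tensoring with $V$ then realises $C_n^{\ell_1}(G,\mathbb R)\widehat{\otimes}V$ as $\ell^1(G)\widehat{\otimes}W$ with $W=\ell^1(G\backslash G^n)\widehat{\otimes}V$.

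Granting this reduction, I would verify relative projectivity directly, without even invoking the intermediate untwisting. Fix an admissible surjective $G$-morphism $p:E\rightarrow X$ together with a bounded (not necessarily equivariant) linear section $r:X\rightarrow E$, $p\circ r=\mathrm{id}_X$, and let $\alpha:C_n^{\ell_1}(G,\mathbb R)\widehat{\otimes}V\rightarrow X$ be an arbitrary $G$-morphism. The idea is to spread the non-equivariant map $r\circ\alpha$ from the normalized representatives by the $G$-action: on generators I set
$$\beta(\gamma\otimes v):=g_1\cdot r\big(\alpha(\bar\gamma\otimes g_1^{-1}\cdot v)\big),$$
extended by bilinearity to the algebraic tensor product $\mathbb R[G^n]\otimes V$. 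The twist $g_1^{-1}\cdot v$ on the $V$-factor is exactly the one forced by the tensor-action convention so that $\beta$ becomes equivariant.

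The verifications then split into three points, all short. First, $G$-equivariance: since the action on $G^n$ is free and $\bar\gamma$ is a complete orbit invariant, applying $h$ leaves $\bar\gamma$ unchanged and turns $g_1$ into $hg_1$, while the paired vector is preserved because the inverse of the new first entry cancels the new group element acting on the $V$-factor; comparing with $h\cdot\beta(\gamma\otimes v)=(hg_1)\cdot r(\cdots)$ yields $\beta\circ h=h\circ\beta$. Second, $p\circ\beta=\alpha$: using that $p$ and $\alpha$ are equivariant and $p\circ r=\mathrm{id}_X$ one finds $p(\beta(\gamma\otimes v))=g_1\cdot\alpha(\bar\gamma\otimes g_1^{-1}\cdot v)=\alpha\big(g_1\cdot(\bar\gamma\otimes g_1^{-1}\cdot v)\big)=\alpha(\gamma\otimes v)$. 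Third, a norm estimate on generators: as every $g\in G$ acts on $E$ and on $V$ by operators of norm $\leq 1$ and $\|\delta_{\bar\gamma}\|_1=1$, one gets $\|\beta(\gamma\otimes v)\|\leq\|r\|\,\|\alpha\|\,\|v\|$.

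The hard part will be promoting this last estimate from the algebraic tensor product to the \emph{completed} module, i.e.\ checking that $\beta$ is bounded for the projective tensor norm and hence extends continuously to $C_n^{\ell_1}(G,\mathbb R)\widehat{\otimes}V=\ell^1(G^n)\widehat{\otimes}V$. This is where the isometric identification $\ell^1(G^n)\widehat{\otimes}V\cong\ell^1(G^n,V)$ of the completed projective tensor product with the space of absolutely summable $V$-valued families is essential: it reduces the estimate to the elementary summable bound $\|\beta(\xi)\|\leq\|r\|\,\|\alpha\|\,\|\xi\|$, the contraction property $\|g\cdot(-)\|\leq\|-\|$ of the action being precisely what prevents any norm blow-up. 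The only remaining care is the book-keeping of the inverse-twists coming from the module structure on $\widehat{\otimes}$, which is also what makes $\bar\gamma$ and the paired vector orbit-invariant and thus $\beta$ equivariant. The resulting $\beta$ is then the $G$-morphism lifting $\alpha$ through $p$ required by the definition of relative projectivity.
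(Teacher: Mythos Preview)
Your approach is essentially the paper's: build the lift by averaging the non-equivariant section $r$ along the $G$-action, using the first coordinate of $G^n$ as the acting element. The paper first reduces to $n=1$ via the isometric identification $C_n^{\ell_1}(G,\mathbb R)\cong C_1^{\ell_1}(G,\mathbb R)\widehat\otimes C_{n-1}^{\ell_1}(G,\mathbb R)$ and then sets $\beta(g\otimes v)=g\cdot r\bigl(g^{-1}\cdot\alpha(g\otimes v)\bigr)$, whereas you keep $n$ general and normalize to the orbit representative $\bar\gamma$; these are the same construction. You also make explicit the continuous extension to the completed tensor product via $\ell^1(G^n)\widehat\otimes V\cong\ell^1(G^n,V)$, which the paper leaves implicit.

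There is, however, a sign slip coming from the tensor-action convention. The paper uses $g\cdot(x\otimes y):=g\cdot x\otimes g^{-1}\cdot y$, and with this convention your formula $\beta(\gamma\otimes v)=g_1\cdot r\bigl(\alpha(\bar\gamma\otimes g_1^{-1}\cdot v)\bigr)$ is \emph{not} $G$-equivariant: acting by $h$ sends $\gamma\otimes v$ to $h\gamma\otimes h^{-1}v$, so the normalized $V$-component becomes $(hg_1)^{-1}\cdot(h^{-1}v)=g_1^{-1}h^{-2}\cdot v$, not $g_1^{-1}\cdot v$ as your check asserts. Your verification is in fact correct for the \emph{diagonal} convention $g\cdot(x\otimes y)=gx\otimes gy$. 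Under the paper's convention the right formula is $\beta(\gamma\otimes v)=g_1\cdot r\bigl(g_1^{-1}\cdot\alpha(\gamma\otimes v)\bigr)$, which, using the equivariance of $\alpha$, unwinds to $g_1\cdot r\bigl(\alpha(\bar\gamma\otimes g_1\cdot v)\bigr)$; with this single correction your three verifications (equivariance, $p\circ\beta=\alpha$, and the norm bound) all go through unchanged.
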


\begin{proof}[D\'emonstration] Puisque pour tout couple d'entiers naturels $m\geq 1$ et $n\geq 1$ les espaces de
Banach $C_m^{\ell_1}(G, \mathbb R)\widehat{\otimes} C_n^{\ell_1}(G, \mathbb R)$
et $C_{m+n}^{\ell_1}(G, \mathbb R)$ sont canoniquement  isomorphismes, il
suffit  de d\'emontrer le lemme pour $n=1$.

Soit  $\begin{xy} (0,0)*+{E}="a"; (15, 0)*+{X}="b"; (25,0)*+{0}="o"
\ar@{->}^{p} "a" ; "b" <3pt> \ar@{<-}_{r} "a"; "b" <-3pt> \ar@{->} "b";
"o"\end{xy}$ un $G$-morphisme surjectif admissible. Pour tout  $G$-morphisme
$\alpha : C_1^{\ell_1}(G, \mathbb R)\widehat{\otimes}V \rightarrow X$ et
pour tous $g\in G$ et $v\in V$ posons
$$\beta(g\otimes v) = g\cdot r(g^{-1}\cdot \alpha(g\otimes v)).$$

Les op\'erateurs  $\alpha$ et $p$ \'etant  $G$-\'equivariants, l'op\'erateur
continu $\beta : C_1^{\ell_1}(G, \mathbb R)\widehat{\otimes}V\rightarrow E$
v\'erifie l'identit\'e   $p\circ \beta= \alpha$. De plus, comme pour tous les
\'el\'ements $g$ et $h\in G$ et pour tout vecteur $v\in V$ on a,
\begin{eqnarray}
h\cdot \beta(h^{-1}\cdot (g\otimes v)) &=& h\cdot \beta(h^{-1}g\otimes  h\cdot v) \nonumber \\
&=& h \cdot [h^{-1}g \cdot r(g^{-1}h \cdot \alpha(h^{-1}g\otimes  h\cdot v))] \nonumber \\
&=& g\cdot r(g^{-1}h \cdot \alpha(h^{-1}\cdot(g\otimes v))) \nonumber \\
&=& g\cdot r(g^{-1}\cdot \alpha(g\otimes v))\nonumber \\
&=&\beta(g\otimes v)\nonumber   \end{eqnarray} on en d\'eduit que l'op\'erateur
continu $\beta : C_1^{\ell_1}(G, \mathbb R)\widehat{\otimes}V\rightarrow E$
est $G$-\'equivariant. Par suite, le $G$-module de Banach $C_1^{\ell_1}(G,
\mathbb R)\widehat{\otimes}V$ est relativement projectif.
\end{proof}

\subsection{R\'esolutions relativement projectives}

Soit $V$ un $G$-module de Banach. On appelle  $G$-r\'esolution homologique de
$V$ dans la cat\'egorie relative des $G$-modules de Banach la donn\'ee d'un
complexe diff\'erentiel $(K_*, d_*)$,
$$\begin{xy}
(-5,0)*+{}="w"; (10, 0)*+{K_3}="v"; (25, 0)*+{K_2}="k2"; (40, 0)*+{K_1}="k1";
(55, 0)*+{K_0}="k0"; (70, 0)*+{V}="e" ; (85, 0)*+{0}="t";
\ar@{.>} "w"; "v" \ar@{->}^{d_3} "v"; "k2"
\ar@{->}^{d_2} "k2"; "k1" \ar@{->}^{d_1} "k1";
"k0" \ar@{->}^{d_0=\varepsilon} "k0"; "e"
\ar@{->} "e"; "t"
 \end{xy}$$
dont les fl\`eches sont exactes (i.e. $Im d_{n+1}=Ker d_{n}$) et dont les
termes $K_n$ sont des $G$-modules de Banach. On rappelle que le complexe
diff\'erentiel $(K_*, d_*)$ poss\`ede une homotopie contractante s'il existe
une suite d'op\'erateurs continus $s_n : K_n\rightarrow K_{n+1}$ tels que
$\forall n\in \mathbb N, s_{n-1}\circ d_n+d_{n+1}\circ s_n=id_{K_n}$,
$d_{1}\circ s_0=id_{K_0}$ et tel que la norme $\parallel s_n\parallel\leq 1$,
$$\begin{xy}
(-5,0)*+{}="w"; (10, 0)*+{K_3}="v"; (25, 0)*+{K_2}="k2"; (40, 0)*+{K_1}="k1";
(55, 0)*+{K_0}="k0"; (70, 0)*+{V}="e" ; (85, 0)*+{0}="t";
\ar@{.>} "w"; "v" <3pt> \ar@{<.} "w"; "v" <-3pt>
\ar@{->}^{d_3} "v"; "k2" <3pt> \ar@{<-}_{s_2} "v"; "k2" <-3pt>
\ar@{->}^{d_2} "k2"; "k1" <3pt> \ar@{<-}_{s_1} "k2"; "k1" <-3pt>
\ar@{->}^{d_1} "k1"; "k0" <3pt> \ar@{<-}_{s_0} "k1"; "k0" <-3pt>
\ar@{->}^{d_0=\varepsilon} "k0"; "e"
\ar@{->} "e"; "t"
 \end{xy}.$$ Quand une $G$-r\'esolution
homologique $(K_*, d_*)$ d'un $G$-module de Banach $V$ poss\`ede une homotopie
contractante $s_* : K_*\rightarrow K_{*+1}$ on dira que $(K_*, d_*, s_*)$ est
une  r\'esolution homologique forte du  $G$-module de Banach $V$.

Dans ce qui va suivre on va se servir du lemme 1 pour associer \`a chaque
$G$-module de Banach une r\'esolution relativement projective forte.

D'abord, notons que lorsque le groupe $G$ agit trivialement sur  $\mathbb R$,
vue comme espace de Banach,  le lemme 1 permet de voir que pour tout entier
$n\geq 0$ l'espace de Banach des  $n$-cha\^\i nes non normalis\'ees
$$C_{n, 0}^{\ell_1}(G, \mathbb R):= C_{n+1}^{\ell_1}(G, \mathbb R)$$
est un $G$-module de Banach relativement projectif. En fait, si  on
consid\`ere la suite d'op\'erateurs diff\'erentiels continus de degr\'e $-1$,
$$\partial_n =\displaystyle{\sum_{i=0}^{i=n+1}}
(-1)^id_i : C_n^{\ell_1}(G, \mathbb R)\rightarrow
C_{n-1}^{\ell_1}(G, \mathbb R) \quad \textrm{o\`u} \quad d_i(g_0,
g_1, \cdots, g_n) = (g_0, \cdots, \widehat{g_i}, \cdots, g_n)$$
on obtient une $G$-r\'esolution forte $(C_{*, 0}^{\ell_1}(G, \mathbb R),
\partial_*)$ de $\mathbb R$. De plus, si pour tout entier $n\geq 0$ on pose
$s_n(g_0, \cdots, g_n) = (1, g_0, \cdots, g_n)$ on d\'efinit ainsi une
homotopie contractante qui rend le complexe diff\'erentiel des $G$-modules de
Banach $(C_{*, 0}^{\ell_1}(G, \mathbb R),
\partial_*, s_*)$  une r\'esolution homologique forte de l'espace
de Banach $\mathbb R$  qui s'appelle la   bar r\'esolution du groupe $G$ par
les cha\^\i nes  non normalis\'ees.

Dans le cas d'un  $G$-module de Banach  $V$ quelconque,  une  r\'esolution
homologique  forte peut \^etre construite de la mani\`ere suivante.

Soient $E$, $F$ et $V$ des espaces de Banach et  $q : F\rightarrow Q$ un
op\'erateur lin\'eaire continu  surjectif. D'apr\`es le th\'eor\`eme de
l'application ouverte (cf. \cite{Hb} et \cite{Yo}),  l'op\'erateur lin\'eaire
$q\widehat{\otimes}V : E\widehat {\otimes}V\rightarrow Q\widehat{\otimes}V$ qui
est obtenu  en  tensorisant l'op\'erateur $q$ par l'espace de Banach $V$ est
lui m\^eme continu et surjectif. De m\^eme,  le th\'eor\`eme l'application
ouverte permet de d\'eduire l'identification : $Ker(q\widehat{\otimes}V)\simeq
Ker(q)\widehat{\otimes}V$ (cf. prop. 3 de \cite{Gro} page 38). En cons\'equence
de ces remarques, on conclut  que le foncteur de tensorisation
$-\widehat{\otimes}V$ pr\'eserve les suites exactes dans la cat\'egorie des
espaces de Banach. D'o\`u la~:

\begin{proposition}
Soient $G$ un groupe discret et $V$ un $G$-module de Banach. Alors,  le
complexe diff\'erentiel $(C_{*, 0}^{\ell_1}(G, \mathbb R)\widehat{\otimes}V,
\partial_*, s_*)$ qui est obtenu en tensorisant la bar r\'esolution
$(C_{*,0}^{\ell_1}(G, \mathbb R), \partial_n*, s_*)$ du groupe $G$ par le
$G$-module de Banach $V$ est une r\'esolution forte relativement projective de
$V$.
\end{proposition}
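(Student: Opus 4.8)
The plan is to transport, along the functor $-\widehat{\otimes}V$, the three structural features of the bar resolution already in hand: the relative projectivity of its terms, its exactness, and its norm-$\le 1$ contracting homotopy. Concretely, I would show that $(C_{*,0}^{\ell_1}(G,\mathbb R)\widehat{\otimes}V,\,\partial_*\widehat{\otimes}\mathrm{id}_V,\,s_*\widehat{\otimes}\mathrm{id}_V)$ is a strong relatively projective resolution of $V$ by checking the three defining conditions in turn, each of which reduces to an already available fact.

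The term-by-term relative projectivity is immediate: since $C_{n,0}^{\ell_1}(G,\mathbb R)=C_{n+1}^{\ell_1}(G,\mathbb R)$, every term $C_{n,0}^{\ell_1}(G,\mathbb R)\widehat{\otimes}V$ is precisely of the shape treated in Lemme 1, hence relatively projective. For the strong condition I would simply set $\sigma_n:=s_n\widehat{\otimes}\mathrm{id}_V$. By functoriality and bilinearity of the completed projective tensor product, applying $-\widehat{\otimes}\mathrm{id}_V$ to the homotopy identity $s_{n-1}\circ\partial_n+\partial_{n+1}\circ s_n=\mathrm{id}$ yields $\sigma_{n-1}\circ(\partial_n\widehat{\otimes}\mathrm{id}_V)+(\partial_{n+1}\widehat{\otimes}\mathrm{id}_V)\circ\sigma_n=\mathrm{id}$, while the projective tensor norm gives $\|\sigma_n\|\le\|s_n\|\,\|\mathrm{id}_V\|\le 1$. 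Thus the tensored complex already carries a contracting homotopy of norm $\le 1$; in particular it is acyclic, so exactness comes for free.

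It then remains only to identify the object resolved. Tensoring the augmentation $\varepsilon:C_{0,0}^{\ell_1}(G,\mathbb R)\to\mathbb R$ by $V$ and using the canonical isometric identification $\mathbb R\widehat{\otimes}V\cong V$ exhibits $V$ as the cokernel of $\partial_1\widehat{\otimes}\mathrm{id}_V$; alternatively, exactness at every spot follows directly from the exactness-preservation of $-\widehat{\otimes}V$ recorded just before the statement (surjections go to surjections and $\mathrm{Ker}(q\widehat{\otimes}V)\simeq\mathrm{Ker}(q)\widehat{\otimes}V$). I do not expect any genuine obstacle: the whole argument rests on two standard properties of $\widehat{\otimes}$ — the estimate $\|f\widehat{\otimes}g\|\le\|f\|\,\|g\|$ for the projective norm and its good behaviour with respect to kernels — both already available from the cited references, so the only real care required is the bookkeeping of degrees in the homotopy identities and the verification of the norm bound.
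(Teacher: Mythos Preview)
Your proposal is correct and follows essentially the same route as the paper: relative projectivity from Lemme~1, the contracting homotopy obtained by tensoring $s_*$ with $\mathrm{id}_V$, and exactness either from that homotopy or from the exactness-preservation of $-\widehat{\otimes}V$ recorded just before the statement. The paper in fact emphasises the latter route (the ``D'o\`u la'' refers to the open-mapping/Grothendieck discussion), whereas you point out that the tensored contracting homotopy already forces acyclicity, making the appeal to exactness-preservation redundant; this is a harmless simplification.
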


Le lemme suivant nous  montre que toutes les r\'esolutions fortes relativement
projectives d'un $G$-module de Banach donn\'e  sont homotopiquement
\'equivalentes. Sa preuve sera omise,  car,  elle est analogue au cas classique
(cf. \cite{Gu} et \cite{Ma}).

\begin{lemma}
Soient $(U_*, d_*, s_*)$ une r\'esolution homologique forte de $G$-modules de
Banach et $(V_*, d_*)$ un complexe diff\'erentiel homologique de $G$-modules de
Banach relativement projectifs. Alors, tout $G$-morphisme  continu $u :
V_{_{-1}}\rightarrow U_{_{-1}}$  se prolonge en un morphisme $u_* :
V_*\rightarrow U_*$ de complexes diff\'erentiels de $G$-modules de Banach
unique \`a homotopie pr\`es.
\end{lemma}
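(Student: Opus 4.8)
Le plan est de reprendre la preuve du th\'eor\`eme de comparaison de l'alg\`ebre homologique classique (cf. \cite{Gu}, \cite{Ma}) en l'adaptant au cadre relatif : la seule pr\'ecaution nouvelle consiste \`a s'assurer que toutes les surjections auxquelles on applique la projectivit\'e relative des $V_n$ sont \emph{admissibles}. C'est ici qu'intervient de fa\c{c}on essentielle l'homotopie contractante $s_*$ de la r\'esolution forte $(U_*, d_*, s_*)$ : pour chaque $n\geq 1$, la corestriction $d_n : U_n\rightarrow Ker(d_{n-1})=Im(d_n)$ est un $G$-morphisme surjectif admissible, une section lin\'eaire continue (non n\'ecessairement $G$-\'equivariante) \'etant fournie par la restriction de $s_{n-1}$ \`a $Ker(d_{n-1})$ ; en effet, l'identit\'e d'homotopie $s_{n-2}d_{n-1}+d_ns_{n-1}=id_{U_{n-1}}$ donne $d_ns_{n-1}(x)=x$ pour tout $x\in Ker(d_{n-1})$. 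De m\^eme, l'augmentation $\varepsilon^U=d_0 : U_0\rightarrow U_{-1}$ est une surjection admissible, scind\'ee par l'homotopie contractante.

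Je construirais alors le morphisme $u_* : V_*\rightarrow U_*$ par r\'ecurrence sur le degr\'e. Pour $n=0$, on rel\`eve le $G$-morphisme $u\circ\varepsilon^V : V_0\rightarrow U_{-1}$ \`a travers la surjection admissible $\varepsilon^U : U_0\rightarrow U_{-1}$, ce qui est possible car $V_0$ est relativement projectif ; on obtient un $G$-morphisme $u_0 : V_0\rightarrow U_0$ avec $\varepsilon^U u_0=u\,\varepsilon^V$. Supposons $u_0,\dots,u_{n-1}$ d\'ej\`a construits, les carr\'es correspondants \'etant commutatifs. Comme $d_{n-1}(u_{n-1}d_n)=u_{n-2}d_{n-1}d_n=0$, l'image de $u_{n-1}d_n : V_n\rightarrow U_{n-1}$ est contenue dans $Ker(d_{n-1})=Im(d_n)$ (exactitude de $U_*$) ; la projectivit\'e relative de $V_n$ et l'admissibilit\'e de $d_n : U_n\rightarrow Ker(d_{n-1})$ permettent de relever ce morphisme en un $G$-morphisme $u_n : V_n\rightarrow U_n$ tel que $d_nu_n=u_{n-1}d_n$. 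Il faut noter que la projectivit\'e relative fournit directement un rel\`evement \emph{\'equivariant}, l'\'equivariance de la section $s_{n-1}$ n'\'etant jamais requise.

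Pour l'unicit\'e \`a homotopie pr\`es, je poserais $f_*=u_*-u'_*$ pour deux rel\`evements $u_*,u'_*$ de $u$, de sorte que $f_*$ rel\`eve $0 : V_{-1}\rightarrow U_{-1}$, puis je construirais par la m\^eme r\'ecurrence une homotopie de $G$-morphismes $h_n : V_n\rightarrow U_{n+1}$ v\'erifiant $f_n=d_{n+1}h_n+h_{n-1}d_n$ (avec $h_{-1}=0$). \`A chaque \'etape, en posant $g_n=f_n-h_{n-1}d_n$, un calcul direct utilisant que $f_*$ est un morphisme de cha\^\i nes, la relation d'homotopie au rang $n-1$ et $d_{n-1}d_n=0$ montre que $d_ng_n=0$, donc $g_n$ est \`a valeurs dans $Ker(d_n)=Im(d_{n+1})$ ; on rel\`eve alors $g_n$ \`a travers la surjection admissible $d_{n+1} : U_{n+1}\rightarrow Ker(d_n)$ gr\^ace \`a la projectivit\'e relative de $V_n$. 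Le point d\'elicat de toute la d\'emonstration n'est donc pas la r\'ecurrence elle-m\^eme, formellement identique au cas classique, mais la v\'erification syst\'ematique de l'\emph{admissibilit\'e} des corestrictions $d_n : U_n\rightarrow Ker(d_{n-1})$ : c'est exactement l'hypoth\`ese de r\'esolution \emph{forte} (existence de l'homotopie contractante continue $s_*$) qui la garantit et rend licite l'emploi de la projectivit\'e relative \`a chaque cran.
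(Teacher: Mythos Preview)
Your proof is correct and follows exactly the approach the paper has in mind: the paper omits the proof entirely, stating only that it is \og analogue au cas classique (cf.\ \cite{Gu} et \cite{Ma})\fg. You have correctly identified the one genuinely new point in the relative setting, namely that the contracting homotopy $s_*$ is what guarantees the admissibility of the corestrictions $d_n : U_n \to Ker(d_{n-1})$, so that relative projectivity can be invoked at each step; the rest is the classical comparison theorem verbatim.
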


\subsection{D\'efinition et propri\'et\'es  de l'homologie $\ell_1$}

\begin{definition}
Soient $V$ un $G$-module de Banach et $(P_*, d_*)$ une r\'esolution homologique
forte relativement projective de $V$. Les groupes d'homologie du sous-complexe
diff\'erentiel des vecteurs $G$-coinvariants $(P_*)_{_{G}}$ s'appellent espaces
de $\ell_1$-homologie  du groupe $G$ \`a coefficients dans le $G$-module de
Banach $V$ et se notent  $H_*^{\ell_1}(G, V):=H_*((P_*)_{_{G}})$.
\end{definition}

D'apr\`es le  lemme 2 on sait  que toutes les r\'esolutions fortes relativement
projectives d'un $G$-module de Banach $V$ sont homotopiquement \'equivalentes.
Donc, les espaces d'homologie $H_*^{\ell_1}(G, V)$ ne d\'ependent pas de la
r\'esolution relativement projective choisie. En particulier, si on prend  la
bar r\'esolution $(C_{*,0}^{\ell_1}(G, \mathbb R)\widehat{\otimes}V,
\partial_*, s_*)$ associ\'ee au groupe $G$  on obtient un isomorphisme
canonique  :
\begin{eqnarray}
H_n^{\ell_1}(G, V) \simeq H_n(C_{*, 0}^{\ell_1}(G, \mathbb
R)\widehat{\otimes}_{_{G}}V, \partial_*)
\end{eqnarray}
o\`u $\partial_n$ d\'esigne l'op\'erateur diff\'erentiel du complexe des
n-cha\^\i nes { normalis\'ees } de type $\ell_1$,
$$C_{n+1, 0}^{\ell_1}(G, \mathbb
R)\widehat{\otimes}_{_{G}}V\simeq C_n^{\ell_1}(G, \mathbb
R)\widehat{\otimes}V$$ dont  l'expression explicite est donn\'ee pour tout
$[g_1\mid \cdots\mid g_n]\otimes v\in C_n^{\ell_1}(G, \mathbb
R)\widehat{\otimes}V$ par :
\begin{eqnarray*}\partial_n([g_1\mid \cdots\mid g_n]\otimes v) &=&
[g_2\mid \cdots\mid g_n]\otimes g_1\cdot v
-[g_1g_2\mid  \cdots\mid g_n]\otimes v \\ &+& \cdots +
(-1)^n[g_1\mid \cdots\mid g_{n-1}]\otimes v\end{eqnarray*}

Maintenant, gr\^ace \`a l'isomorphisme (4) on voit ais\'ement que le groupe
 $H_0^{\ell_1}(G, V) = V_{_{G}}$. De m\^eme, si on suppose
que le groupe  $G$ agit trivialement sur l'espace de Banach $V$ on en d\'eduit
que le groupe $H_1^{\ell_1}(G, V)=0$ (cf. \cite{Mi}). En effet, si pour tous
les \'el\'ements $g\in G$ et $v\in V$ on pose,
\begin{eqnarray}
\mathbf{m}(g, v) = \sum_{n\geq 1}{1\over 2^n}[g^{2^{n-1}}\mid
g^{2^{n-1}}]\otimes v\in C_2^{\ell_1}(G, \mathbb R)\widehat{\otimes}V
\end{eqnarray}
on obtient une  $2$-cha\^\i ne normalis\'ee dont le  bord est \'egal \`a
$\partial_2(\mathbf{m}(g, v)) = [g]\otimes v\in B_1^{\ell_1}(G, V).$

Rappelons aussi que puisque l'image d'un op\'erateur lin\'eaire continu n'est
pas en g\'en\'eral ferm\'ee  ceci implique  que la restriction  de la norme
$\ell_1$ sur l'espace des $\ell_1$-cycles $Z_n^{\ell_1}(G, V)$ peut
d\'eg\'en\'erer sur le  quotient, $\displaystyle\frac{Z_n^{\ell_1}(G,
V)}{B_n^{\ell_1}(G, V)}= H_n^{\ell_1}(G, V)$. En effet, S. Soma a construit des
exemples de  vari\'et\'es de dimension trois $M^3$ pour  lesquelles la
semi-norme $\parallel\cdot\parallel_1$ induite  sur l'espace $H_2^{\ell_1}(M,
\mathbb R)$ d\'eg\'en\`ere (cf. \cite{So}).

\begin{definition}
Le groupe quotient de l'espace $H_n^{\ell_1}(G, V)$ par le noyau
 $Ker(\parallel\cdot\parallel_1)$ s'appelle groupe
d'homologie $\ell_1$-r\'eduite du groupe $G$ \`a coefficients dans le
$G$-module de Banach $V$  et se note $\overline{H}_n^{\ell_1}(G, V)$.
\end{definition}

Pour finir cette section nous donnerons deux r\'esultats classiques qui
concernent l'action d'un groupe discret sur ses propres espaces de
$\ell_1$-homologie.

\begin{proposition}
Soit $V$ un $G$-module de Banach. Alors, pour tout entier $n\geq 0$ et pour
tout \'el\'ement $g\in G$ l'automorphisme int\'erieur $i_g : G\rightarrow G$
induit l'application identique sur l'espace d'homologie $H_n^{\ell_1}(G, V)$
(resp. $\overline{H}_n^{\ell_1}(G, V)$).
\end{proposition}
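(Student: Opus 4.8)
The plan is to reduce the statement to a concrete computation on the bar resolution and to exhibit an explicit chain homotopy between the map induced by the inner automorphism $i_g$ and the identity map. Recall from the isomorphism (4) that $H_n^{\ell_1}(G,V)$ can be computed from the normalized complex $C_n^{\ell_1}(G,\mathbb R)\widehat\otimes_G V$, with the differential $\partial_n$ written explicitly just before Definition 5. An inner automorphism $i_g(h)=ghg^{-1}$ induces, together with the compatible action $v\mapsto g\cdot v$ on the coefficients, a morphism of strong resolutions $(i_g)_*$ on the bar complex. By Lemma 2, any two morphisms of strong resolutions lifting the same map on $V$ are homotopic, hence induce the same map on homology; so it suffices to check that on the coefficient space $V=H_0^{\ell_1}(G,V)=V_G$ the map induced by $i_g$ is the identity.

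First I would pin down the morphism $(i_g)_*$ at the chain level: on the normalized bar complex it is the composite of the substitution $[h_1\mid\cdots\mid h_n]\mapsto [gh_1g^{-1}\mid\cdots\mid gh_ng^{-1}]$ with the coefficient action $v\mapsto g\cdot v$. Passing to $G$-coinvariants, the class of $[h_1\mid\cdots\mid h_n]\otimes v$ is identified with the class of the conjugated chain tensored with $g\cdot v$; the point is that conjugation is exactly absorbed by the coinvariance relation $g\cdot x\otimes w\sim x\otimes g^{-1}\cdot w$. Concretely, I would write $gh_ig^{-1}$ and use the $G$-action $g\cdot(h_1,\dots,h_n)=(gh_1,\dots,gh_n)$ defining the $G$-module structure on $C_n^{\ell_1}(G,\mathbb R)$ to move the left multiplication by $g$ across the tensor sign onto $v$, leaving a right multiplication by $g^{-1}$ that I then handle by a telescoping homotopy. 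In degree $0$ this already gives that $(i_g)_*$ equals the identity on $V_G$, which is the input needed for the Lemma 2 argument.

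The cleanest route, and the one I would actually carry out, is to build the homotopy directly rather than invoke Lemma 2 abstractly: define operators $s_n\colon C_n^{\ell_1}\widehat\otimes_G V\to C_{n+1}^{\ell_1}\widehat\otimes_G V$ by inserting the element $g$ in each of the $n+1$ slots with alternating signs, of the standard form
$$
s_n([h_1\mid\cdots\mid h_n]\otimes v)=\sum_{j=0}^{n}(-1)^{j}[gh_1g^{-1}\mid\cdots\mid gh_jg^{-1}\mid g\mid h_{j+1}\mid\cdots\mid h_n]\otimes v,
$$
and then verify $\partial_{n+1}s_n+s_{n-1}\partial_n=(i_g)_*-\mathrm{id}$ by the usual cancellation of adjacent terms. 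Because $g$ is inserted as a fixed group element and the chains carry the $\ell_1$-norm with $\parallel s_n\parallel$ bounded independently of the chain, the $s_n$ are bounded operators and descend to the coinvariants, so the homotopy is valid in the category of Banach $G$-modules and passes to the quotient defining $\overline H_n^{\ell_1}(G,V)$ as well.

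The main obstacle I anticipate is bookkeeping rather than anything deep: getting the conjugations $gh_ig^{-1}$ to interact correctly with both the insertion of $g$ and the coinvariance relation, so that the boundary terms telescope to leave precisely the conjugated chain $[gh_1g^{-1}\mid\cdots\mid gh_ng^{-1}]\otimes g\cdot v$ on one side and the original chain on the other. A secondary point to check is uniform boundedness of $s_n$ in the $\ell_1$-norm, which is needed for the homotopy to be continuous and hence to survive passage to reduced homology; since each $s_n$ is a signed sum of $n+1$ isometric insertions this gives $\parallel s_n\parallel\leq n+1$, which is finite in each fixed degree and therefore suffices for the degreewise statement. Once the homotopy is verified on $C_*^{\ell_1}\widehat\otimes_G V$, the same formula, being bounded, induces a homotopy on the reduced complex, giving the identity on $\overline H_n^{\ell_1}(G,V)$ and completing both cases of the proposition simultaneously.
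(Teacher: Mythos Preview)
Your approach is correct, but the paper takes a much shorter route that avoids constructing a chain homotopy altogether. The paper works on an arbitrary strong relatively projective resolution $(P_*,\partial_*)$ of $V$ and observes that the chain map induced by $i_{g_0}$ can be taken to be $U_n(v_n)=g_0\cdot v_n$; this satisfies the required twisted equivariance $U_n(g\cdot v_n)=i_{g_0}(g)\cdot U_n(v_n)$, and since $U_n(v_n)-v_n=g_0\cdot v_n-v_n$ lies in the subspace $\overline{P_n}(G)$ killed by passing to coinvariants, $U_n$ already induces the identity on $(P_n)_G$ \emph{before} taking homology. No homotopy is needed. Your explicit bar-complex homotopy works and has the virtue of being completely hands-on, but it trades a one-line observation for a telescoping computation and a degreewise bound $\|s_n\|\leq n+1$; the paper's argument is resolution-independent, uniform in $n$, and sidesteps all the bookkeeping you flag as the main obstacle.
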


\begin{proof}[D\'emonstration] Soient $V$ un $G$-module de Banach et $(P_n*, \partial_*, s_*)$ une r\'esolution homo-logique forte relativement projective de $V$. Pour chaque $g_0\in G$  et $n\in\mathbb N$   posons  pour tout vecteur $v_n\in P_n$, $U_n(v_n) = g_0\cdot v_n$. Ainsi, puisque les morphismes $U_n : P_n\rightarrow P_n$ v\'erifient les deux relations,
$$U_n(g\cdot v_n) = i_{_{g_0}}(g)\cdot U_n(v_n) \quad \textrm{et}\quad U_n(v_n) = v_n + (g_0\cdot v_n-v_n), \quad \forall v_n\in P_n, \forall g\in G$$ on  en d\'eduit que  $U_n$ induit l'identit\'e sur
l'espace des vecteurs $G$-coinvariants $(P_n)_{_{G}}$. Par cons\'equent,
l'automorphisme int\'erieur  $i_{_{g_0}} : G\rightarrow G$ induit l'application
identique sur les groupes d'homologie  $H_n((P_*)_{_{G}})=H_n^{\ell_1}(G, V)$.
\end{proof}

\begin{corollary}
Soit $\Gamma$ un groupe discret et $G$ sous-groupe normal de $\Gamma$. Alors,
la conjugaison dans le groupe $\Gamma$ induit une action  naturelle du groupe
quotient $\displaystyle\frac{\Gamma}{G}$ sur les espaces d'homologie
$H_*^{\ell_1}(G, V)$ (resp. $\overline{H}_*^{\ell_1}(G, V)$).
\end{corollary}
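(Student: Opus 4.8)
Le plan est de d\'eduire ce corollaire de la proposition 2, en munissant d'abord les espaces $H_*^{\ell_1}(G, V)$ d'une action du groupe $\Gamma$ tout entier, puis en montrant que le sous-groupe normal $G$ agit trivialement, de sorte que cette action se factorise \`a travers le quotient $\Gamma/G$. On suppose ici que $V$ est un $\Gamma$-module de Banach, le groupe $G$ agissant sur $V$ par restriction (dans l'application principale, $V=\mathbb R$ est muni de l'action triviale).

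D'abord, \`a chaque \'el\'ement $\gamma\in\Gamma$ j'associerais le couple form\'e de l'automorphisme de conjugaison $c_\gamma : G\rightarrow G$, $g\mapsto \gamma g\gamma^{-1}$, et de l'op\'erateur born\'e $f_\gamma : V\rightarrow V$, $v\mapsto \gamma\cdot v$. Ce couple est compatible, au sens o\`u $f_\gamma(g\cdot v) = \gamma\cdot(g\cdot v) = (\gamma g\gamma^{-1})\cdot(\gamma\cdot v) = c_\gamma(g)\cdot f_\gamma(v)$ pour tous $g\in G$ et $v\in V$, c'est-\`a-dire que $f_\gamma$ est $c_\gamma$-\'equivariant. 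Sur le complexe des cha\^\i nes normalis\'ees intervenant dans l'isomorphisme $(4)$, je d\'efinirais alors l'op\'erateur
$$\Phi_\gamma([g_1\mid\cdots\mid g_n]\otimes v) = [c_\gamma(g_1)\mid\cdots\mid c_\gamma(g_n)]\otimes f_\gamma(v).$$
Comme $c_\gamma$ permute les g\'en\'erateurs de $C_n^{\ell_1}(G, \mathbb R)$, l'op\'erateur $\Phi_\gamma$ est bien d\'efini et de norme $\parallel\Phi_\gamma\parallel\leq 1$ ; et la $c_\gamma$-\'equivariance de $f_\gamma$, jointe au fait que $c_\gamma$ est un homomorphisme, montre par un calcul direct que $\Phi_\gamma$ commute avec l'op\'erateur de bord $\partial_*$. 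Ainsi $\Phi_\gamma$ induit un endomorphisme $(\gamma)_*$ de $H_n^{\ell_1}(G, V)$ pour tout $n\geq 0$, ind\'ependant de la r\'esolution relativement projective forte choisie d'apr\`es le lemme 2.

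Ensuite, je v\'erifierais que $\gamma\mapsto(\gamma)_*$ est un homomorphisme de groupes. Comme $c_{\gamma_1\gamma_2}=c_{\gamma_1}\circ c_{\gamma_2}$ et $f_{\gamma_1\gamma_2}=f_{\gamma_1}\circ f_{\gamma_2}$, on a $\Phi_{\gamma_1\gamma_2}=\Phi_{\gamma_1}\circ\Phi_{\gamma_2}$ au niveau des cha\^\i nes, d'o\`u $(\gamma_1\gamma_2)_*=(\gamma_1)_*\circ(\gamma_2)_*$ en homologie ; ceci munit $H_n^{\ell_1}(G, V)$ d'une structure de $\Gamma$-module. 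Le point crucial est alors que pour $\gamma=g_0\in G$ l'automorphisme $c_{g_0}$ n'est autre que l'automorphisme int\'erieur $i_{g_0}$, tandis que $f_{g_0}$ co\"\i ncide exactement avec l'action de $g_0$ sur $V$ utilis\'ee dans la preuve de la proposition 2. Par cons\'equent $(g_0)_*$ est pr\'ecis\'ement l'application induite par $i_{g_0}$, et la proposition 2 affirme que $(g_0)_*=id$. Le sous-groupe $G$ est donc contenu dans le noyau de l'homomorphisme $\Gamma\rightarrow Aut(H_n^{\ell_1}(G, V))$, et l'action de $\Gamma$ se factorise \`a travers le quotient $\Gamma/G$.

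Enfin, pour l'homologie $\ell_1$-r\'eduite, il suffit d'observer que chaque $\Phi_\gamma$, \'etant de norme $\leq 1$, pr\'eserve le noyau $Ker(\parallel\cdot\parallel_1)$, donc passe au quotient et induit un endomorphisme de $\overline{H}_n^{\ell_1}(G, V)$ ; la m\^eme factorisation \`a travers $\Gamma/G$ s'applique alors mot pour mot. La seule \'etape demandant un peu de soin est la bonne d\'efinition de $(\gamma)_*$ en homologie ind\'ependamment de la r\'esolution choisie, mais elle r\'esulte directement du lemme 2 ; tout le reste se ram\`ene \`a la proposition 2.
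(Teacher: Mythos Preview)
Your proposal is correct and follows exactly the approach the paper has in mind: the corollary is stated without proof as an immediate consequence of Proposition 2, and you have simply spelled out the standard argument---define the $\Gamma$-action on the normalized bar complex via conjugation on $G$ and the module action on $V$, check it is a chain map of norm $\leq 1$, and then invoke Proposition 2 to see that $G$ acts trivially so the action descends to $\Gamma/G$. There is nothing to add.
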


\section{Cohomologie born\'ee d'un groupe discret}

\subsection{D\'efinition et propri\'et\'es}

\'Etant donn\'e un $G$-module de Banach $V$, pour tout entier $n\geq 0$ on
d\'esigne  par $C_b^n(G, V)$ l'espace de Banach des $n$-cocha\^\i nes  non
homog\`enes born\'ees $f : G^n\rightarrow V$. C'est-\`a-dire, il existe un
r\'eel $k>0$ tel que la norme $\ell_\infty$,
$$\parallel f\parallel_\infty = \sup\{\parallel f(g_1,\cdots, g_n)\parallel /
g_1, \cdots, g_n\in G\}\leq k.$$

Notons que la norme $\|\cdot\|_\infty$  permet de voir que l'espace $C_b^n(G,
V)$ est isomorphe \`a l'espace de Banach des op\'erateurs lin\'eaires continus
$\mathcal{L}(\mathbb R[G^n], V)$.

Sur le complexe des cocha\^\i nes born\'ees non homog\`enes $C_b^*(G, V)$ on
d\'efinit une diff\'erentielle $d_n : C_b^n(G, V)\rightarrow C_b^{n+1}(G, V)$
par l'expression,
\begin{eqnarray}
d_n(f)(g_1,\cdots ,g_{n{\scriptstyle+}1})&=& g_1.f(g_2,\cdots ,
   g_{n{\scriptstyle+}1}) \nonumber \\
   &+& \sum_{i=1}^{i=n}(-1)^i f(g_1,\cdots ,g_ig_{i{\scriptstyle+}1}, \cdots,
g_{n{\scriptstyle+}1}){\scriptstyle+}(-1)^{n{\scriptstyle+}1}f(g_1, \cdots,
g_n) \nonumber
\end{eqnarray}

Les groupes quotient $H_b^n(G, V):=\displaystyle\frac{Ker(d_n)}{Im(d_{n-1})}$
s'appellent espaces de cohomologie born\'ee du groupe $G$ \`a valeurs dans le
$G$-module de Banach $V$. Notons qu'on a  $H_b^0(G, V)=V^{G}$ et que
$$H_b^1(G, V)=
\displaystyle{ \{f\in C_b^1(G, V)\mid f(g_1g_2)=g_1.f(g_2)+f(g_1),
 \forall g_1, g_2\in G\} \over \{f\in C_b^1(G, V) \mid \exists v\in V, \forall
 g\in G, f(g)=g.v-v\}}.$$

En particulier, quand le groupe $G$ agit trivialement sur l'espace vectoriel
r\'eel $V$ on voit qu'on a $H_b^1(G, V){\scriptstyle=}0$ puisque  $\{0\}$ est
le seul sous-groupe  additif born\'e dans  $V$. Notons aussi que la norme
$\|\cdot\|_\infty$ d\'efinie sur les espaces $C_b^n(G, V)$ induit une
semi-norme sur les espaces de cohomologie born\'ee $H_b^n(G, V)$. Quand le
groupe $G$ agit trivialement sur $V=\mathbb R$,  N. Ivanov \`a d\'emontr\'e
que $H_b^2(G, \mathbb R)$ est un espace de Banach (cf. \cite{Iv}).

Les groupes de cohomologie born\'ee $H_b^n(G, V)$ peuvent \^etre d\'efinis \`a
partir du sous-complexe diff\'erentiel des coha\^ines  homog\`enes
$G$-invariantes,
$$(C_{b, 0}^n(G, V))^G:=(\mathcal{L}(\mathbb R[G^{n+1}], V))^G
\simeq C_b^n(G, V)$$ o\`u la diff\'erentielle d'une $n$-cocha\^\i ne homog\`ene
$f : G^{n+1}\rightarrow V$ est donn\'ee par l'expression,
\begin{eqnarray}
\delta_n(f)(g_0,g_1,\cdots ,g_{n+1})=\sum_{i=0}^{i=n+1}(-1)^if(g_0,\cdots
,{\widehat g_i},\cdots ,
g_{n+1}), \nonumber
\end{eqnarray}
o\`u $\widehat g_i$ veut dire omettre la composante $g_i$.

Pour finir ce paragraphe on donnera quelques r\'esultats utiles pour la suite
de ce travail dont les d\'emonstrations se trouvent dans l'article  \cite{I} ou
\cite{Bou3}.

\begin{proposition}
Soient $G$ un groupe discret et $V$ un $G$-module de Banach. Pour tout
\'el\'ement
 $g_0$ fix\'e dans le groupe $G$ on note  $i_{g_0} : G \rightarrow G$
 l'automorphisme int\'erieur qui lui est associ\'e. Alors l'op\'erateur $({i_{g_0}})_b :
H_b^n(G, V) \rightarrow H_b^n(G, V)$ est \'egal \`a l'identit\'e.
\end{proposition}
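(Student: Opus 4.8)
The plan is to dualise the argument already used for Proposition 2 (the $\ell_1$-homology version), replacing the space of $G$-coinvariants by the space of $G$-invariant cochains. I will work with the homogeneous bounded model $(C_{b,0}^n(G,V))^G\simeq C_b^n(G,V)$ recalled in the present section, a homogeneous cochain being a bounded map $f:G^{n+1}\to V$ with $f(gx_0,\dots,gx_n)=g\cdot f(x_0,\dots,x_n)$. Two ingredients enter. The first is the left-translation operator $U_n$ on $\mathbb{R}[G^{n+1}]$ defined by $U_n(x_0,\dots,x_n)=(g_0x_0,\dots,g_0x_n)$, which is exactly the operator used in the proof of Proposition 2; by duality (precomposition) it induces $f\mapsto f\circ U_n$ on cochains. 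The second is the natural coefficient identification $\alpha\in\mathcal{L}(V,V)$ given by $\alpha(v)=g_0^{-1}\cdot v$.

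First I would check that $U_*$ is an $i_{g_0}$-equivariant bounded chain map. It commutes with the face operators $d_i$ and with the augmentation, because left multiplication by $g_0$ commutes with the deletion of a coordinate, and it satisfies the same equivariance relation as in Proposition 2, namely $U_n(g\cdot x)=i_{g_0}(g)\cdot U_n(x)$ with $i_{g_0}(g)=g_0gg_0^{-1}$; its norm is at most $1$ since $\|g_0\|\le 1$. Next I would check that $\alpha$ is an isometric isomorphism, with inverse $v\mapsto g_0\cdot v$, intertwining the two actions in the required way: $\alpha(i_{g_0}(g)\cdot v)=g_0^{-1}g_0gg_0^{-1}\cdot v=g\cdot\alpha(v)$. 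This compatibility is precisely what guarantees that the cochain operator $f\mapsto\alpha\circ f\circ U_n$ carries $G$-invariant cochains to $G$-invariant cochains, and since $U_*$ is a bounded chain map realising $i_{g_0}$ and $\alpha$ is its accompanying coefficient map, this operator is a cochain-level representative of $(i_{g_0})_b$.

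The computation then collapses. For a $G$-invariant homogeneous cochain $f$ I evaluate
\[
\alpha\bigl(f(U_n(x_0,\dots,x_n))\bigr)=g_0^{-1}\cdot f(g_0x_0,\dots,g_0x_n)=g_0^{-1}\cdot\bigl(g_0\cdot f(x_0,\dots,x_n)\bigr)=f(x_0,\dots,x_n),
\]
where the middle equality is the invariance of $f$ applied to the single element $g_0$. Hence $\alpha\circ f\circ U_n=f$ already at the level of cochains, so the induced endomorphism of $(C_{b,0}^*(G,V))^G$ is the identity, and \emph{a fortiori} $(i_{g_0})_b=\mathrm{id}$ on $H_b^n(G,V)$.

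The only genuine subtlety, and the step I would present most carefully, is the choice of the coefficient twist $\alpha=g_0^{-1}$: for a nontrivial $G$-module $V$ the naive pullback $f\mapsto f\circ U_n$ does \emph{not} preserve $G$-invariance, and it is the cancellation of $\alpha$ against the action of $g_0$ that simultaneously forces invariance and the final equality. Boundedness never becomes an obstacle, since every structural operator above has norm at most one. I note in passing that the left-translation model $U_n$ is what makes the result hold on the nose; had I instead used the coordinatewise conjugation $x_i\mapsto i_{g_0}(x_i)$, I would only obtain a chain map homotopic to the identity and would be forced to exhibit an explicit bounded contracting homotopy, which the present choice avoids entirely.
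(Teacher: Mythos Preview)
Your proof is correct. The paper itself does not supply a proof of this proposition, deferring instead to \cite{I} and \cite{Bou3}; however, your argument is precisely the cohomological dual of the paper's own proof of Proposition~2, using the same left-translation operator $U_n$ together with the coefficient twist $\alpha(v)=g_0^{-1}\cdot v$ needed to land back in $G$-invariant cochains, and the collapse $g_0^{-1}\cdot f(g_0x_0,\dots,g_0x_n)=f(x_0,\dots,x_n)$ is exactly the invariant-side analogue of the coinvariant-side identity $U_n(v_n)=v_n+(g_0\cdot v_n-v_n)$ used there.

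One small point worth tightening in a final write-up: the sentence ``this operator is a cochain-level representative of $(i_{g_0})_b$'' implicitly appeals to Lemme~2 (uniqueness up to $G$-homotopy of equivariant chain maps over a strong resolution) to identify your model $f\mapsto\alpha\circ f\circ U_n$ with the ``naive'' model $f\mapsto\alpha\circ f\circ i_{g_0}^{\times(n+1)}$; you acknowledge this in your closing remark, but it would be cleaner to invoke Lemme~2 explicitly at that step rather than leave it as a side comment.
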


\begin{corollary}
Soit $\Gamma$ un groupe discret. Pour tout sous-groupe normal
$G\subseteq\Gamma$ la conjugaison dans le groupe $\Gamma$ induit une action
isom\'etrique du groupe quotient $\displaystyle\frac{\Gamma}{G}$ sur les
espaces semi-norm\'es $H_b^n(G, V)$.
 \end{corollary}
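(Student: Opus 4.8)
The plan is to realise the putative action of $\Gamma/G$ already at the level of bounded cochains and then to invoke Proposition 3 to show that it descends to the quotient. Throughout I understand the $G$-module structure of $V$ to be the restriction of a $\Gamma$-module structure; this costs nothing in the applications, since $V=\mathbb R$ carries the trivial action and the coefficient module $\overline{H}_2^{\ell_1}(G,\mathbb R)$ carries the $\Gamma/G$-action furnished by Corollary 1. For each $\gamma\in\Gamma$ the normality of $G$ in $\Gamma$ guarantees that conjugation $c_\gamma(g)=\gamma g\gamma^{-1}$ is an automorphism of $G$, and I would define an operator on non-homogeneous cochains by
$$(T_\gamma f)(g_1,\cdots,g_n)=\gamma\cdot f(\gamma^{-1}g_1\gamma,\cdots,\gamma^{-1}g_n\gamma),\qquad f\in C_b^n(G,V).$$
The first routine step is to check that $T_\gamma$ is a morphism of the differential complex $C_b^*(G,V)$, that is $T_\gamma\circ d_n=d_n\circ T_\gamma$; this is the standard verification that the twisting of the arguments by $c_\gamma$ and of the coefficients by $\gamma$ is compatible with the formula for $d_n$ recalled in 3.1.

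Next I would observe that $T_\gamma$ is an isometry of the Banach space $C_b^n(G,V)$. Indeed, the reindexing $(g_1,\cdots,g_n)\mapsto(\gamma^{-1}g_1\gamma,\cdots,\gamma^{-1}g_n\gamma)$ is a bijection of $G^n$ and hence preserves the supremum, while $v\mapsto\gamma\cdot v$ is a surjective linear map of norm $\le 1$ whose inverse $v\mapsto\gamma^{-1}\cdot v$ also has norm $\le 1$, so it is a linear isometry of $V$. Consequently $T_\gamma$ induces a semi-norm-preserving operator $(T_\gamma)_b$ on the semi-normed space $H_b^n(G,V)$. A short computation gives $T_{\gamma_1\gamma_2}=T_{\gamma_1}\circ T_{\gamma_2}$, so $\gamma\mapsto(T_\gamma)_b$ is a homomorphism from $\Gamma$ into the group of isometric linear automorphisms of $H_b^n(G,V)$.

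The decisive step is to show that this homomorphism is trivial on $G$, so that it factors through $\Gamma/G$. For $\gamma=g_0\in G$ the operator $T_{g_0}$ is precisely the map on $H_b^n(G,V)$ induced by the inner automorphism $i_{g_0}:G\to G$ together with the coefficient action of $g_0$, and Proposition 3 asserts exactly that this induced operator is the identity. Hence $(T_{g_0})_b=\mathrm{id}$ for every $g_0\in G$, the action annihilates the normal subgroup $G$, and we obtain a well-defined isometric action of $\Gamma/G$ on the semi-normed space $H_b^n(G,V)$, completing the argument.

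The only genuinely non-formal ingredient is Proposition 3; all the rest is the bookkeeping of the conjugation operator. Thus the main obstacle, the triviality of inner automorphisms on bounded cohomology, has in fact already been removed, exactly as the analogous Proposition 2 removes it for the homological Corollary 1. I would therefore expect this corollary to require nothing beyond the verifications sketched above.
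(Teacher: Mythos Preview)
Your argument is correct and is precisely the standard deduction the paper has in mind: the corollary is stated without proof as an immediate consequence of Proposition~3, and your construction of the conjugation operator $T_\gamma$ on cochains, together with the invocation of Proposition~3 to kill the action of $G$, is exactly how one fills in the details. Your remark that $V$ should carry a $\Gamma$-action (or at least a compatible $\Gamma/G$-action) for the coefficient twist to make sense is a sensible caveat that the paper leaves implicit.
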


La proposition 3 et son  corollaire 2 nous permettent de d\'eduire la
proposition suivante :

\begin{proposition}
Soit $\Gamma$ un groupe discret. Pour tout sous-groupe normal $G\subseteq
\Gamma$ et pour tout entier $n\geq 1$ l'injection canonique $i : G \rightarrow
\Gamma$ induit un op\'erateur $i_b : H_b^n(\Gamma, V) \rightarrow H_b^n(G, V)$
qui prend ses valeurs dans le sous-espace des vecteurs
$\displaystyle\frac{\Gamma}{G}$-invariants. C'est-\`a-dire on a
$Im(i_b)\subseteq H_b^n(G, V)^{^{\Gamma/G}}$.
 \end{proposition}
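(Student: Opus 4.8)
The plan is to follow the hint in the preceding sentence and deduce the statement formally from Proposition 3 together with Corollary 2. The idea is to compare the restriction map $i_b$ with conjugation: on $\Gamma$, conjugation by an element is an \emph{inner} automorphism and hence acts trivially on $H_b^n(\Gamma,V)$, whereas on the normal subgroup $G$ the very same conjugation is precisely what defines the $\Gamma/G$-action on $H_b^n(G,V)$. Matching these two facts through a commutative square will force the image of $i_b$ to land in the invariants.

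First I would fix $\gamma\in\Gamma$ and let $i_\gamma:\Gamma\to\Gamma$, $x\mapsto\gamma x\gamma^{-1}$, be the associated inner automorphism. Because $G$ is normal in $\Gamma$, $i_\gamma$ restricts to an automorphism $c_\gamma:=i_\gamma|_G:G\to G$, so that $i\circ c_\gamma=i_\gamma\circ i$ for the inclusion $i:G\hookrightarrow\Gamma$. Next I would verify, at the level of bounded non-homogeneous cochains, that the square
\[
\begin{CD}
H_b^n(\Gamma,V) @>{(i_\gamma)_b}>> H_b^n(\Gamma,V)\\
@VV{i_b}V @VV{i_b}V\\
H_b^n(G,V) @>{(c_\gamma)_b}>> H_b^n(G,V)
\end{CD}
\]
commutes, where each induced map also incorporates the action of $\gamma$ on the coefficient module $V$. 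Concretely, sending a cocycle $f:\Gamma^n\to V$ to $(g_1,\dots,g_n)\mapsto \gamma\cdot f(\gamma^{-1}g_1\gamma,\dots,\gamma^{-1}g_n\gamma)$ and then restricting the arguments to $G$ yields exactly the cochain obtained by first restricting $f$ to $G^n$ and then applying the conjugation-by-$\gamma$ twist on $G$; both compositions represent $i_b\circ(i_\gamma)_b=(c_\gamma)_b\circ i_b$. This compatibility is the only real computation, and it uses normality in an essential way (it is what makes $c_\gamma$ well defined and the restriction of the twisted cochain again a cochain on $G$).

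Then I would invoke the two quoted results. By Proposition 3, applied to the group $\Gamma$ itself, the inner automorphism $i_\gamma$ induces the identity on $H_b^n(\Gamma,V)$, so the top arrow is $\mathrm{id}$. By Corollary 2, the bottom arrow $(c_\gamma)_b$ is exactly the action of the class $\gamma G\in\Gamma/G$ on $H_b^n(G,V)$ defined by conjugation (this action descends to $\Gamma/G$ precisely because Proposition 3 applied to $G$ makes inner automorphisms of $G$ act trivially). Reading the square on an arbitrary class $x\in H_b^n(\Gamma,V)$ then gives $\gamma G\cdot i_b(x)=(c_\gamma)_b\big(i_b(x)\big)=i_b\big((i_\gamma)_b(x)\big)=i_b(x)$, and since $\gamma$ was arbitrary, $i_b(x)$ is fixed by all of $\Gamma/G$, i.e. $\mathrm{Im}(i_b)\subseteq H_b^n(G,V)^{\Gamma/G}$.

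I expect the only delicate point --- hardly an obstacle --- to be bookkeeping the conventions so that the twisting by $\gamma$ on $V$ is threaded consistently through $(i_\gamma)_b$, $(c_\gamma)_b$ and $i_b$, and checking that the cochain-level $(c_\gamma)_b$ coincides on the nose with the $\Gamma/G$-action fixed in Corollary 2; choosing a sign or side convention differently there would alter the formulas above but not the conclusion.
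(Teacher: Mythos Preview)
Your proposal is correct and follows exactly the approach the paper indicates: the paper does not give a detailed proof of this proposition but merely states that it follows from Proposition~3 and Corollary~2, and the commutative-square argument you have written out is precisely the natural way to combine these two facts.
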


\subsection{Suites spectrales en cohomologie born\'ee} Dans ce paragraphe,  nous donnerons  un bref rappel sur la th\'eorie des suites specrales d\'evelopp\'ee dans la cat\'egorie des complexes diff\'erentiels semi-norm\'es. Ensuite, avec la donn\'ee d'une extension de groupes discrets $1\rightarrow G\stackrel{i}{\rightarrow}\Gamma\stackrel{\sigma}{\rightarrow}\Pi\rightarrow 1$,
 nous d\'ecrirons le complexe diff\'erentiel  double qui induira la suite spectrale de Hochschilde-Serre  en cohomologie born\'ee $(E_r^{p, q}, d_r^{p, q})$ dont la diff\'erentielle $d_3^{n,0} : E_3^{n, 2}\longrightarrow E_r^{n+3, 0}$ sera explicit\'ee  dans la section 5.

\subsubsection{Construction des suites spectrales}
Soit $(K^*, d_*)$ un complexe diff\'erentiel  de degr\'e $+1$ (i.e. cohomologique) dont le terme g\'en\'eral $K^n$ est un espace vectoriel r\'eel  semi-norm\'e et  $d_n : K^n\rightarrow K^{n+1}$ est un op\'erateur lin\'eaire continu tel que $d_{n+1}\circ d_n=0$.

On dira  que le complexe diff\'erentiel $(K^*, d_*)$ poss\`ede une
filtration positive d\'ecroissante  si pour tout entier $n\geq 0$, il
existe une famille de sous-espaces vectoriels  $F^pK^n\subset
F^{p-1}K^n$ tels que $d_n(F^pK^n)\subset F^{p}K^{n+1}$  avec $F^pK^*=K^*$ si $p\leq 0$. Les termes $F^pK^n$ seront donc munis par la topologie induite.

Il est clair que  les inclusions de complexes diff\'erentiels $F^pK^*\stackrel{j}{\hookrightarrow}K^*$  induisent des op\'erateurs lin\'eaires continus, $j^p : H^n(F^pK^*, d_*)
\rightarrow H^n(K^*, d_*)$, et que  les images
\begin{eqnarray*}
F^pH^n(K^*, d_*) :=Im(j^p) \subseteq   H^n(K^*, d_*)  \quad \textrm{ avec } \quad F^0H^n(K^*, d_*)=H^n(K^*, d_*)
\end{eqnarray*}
d\'efinissent une filtration positive d\'ecroissante de l'espace vectoriel $H^n(K^*, d_*)$,
$$ \cdots \subseteq F^{p+1}H^n(K^*, d_*)\subseteq F^pH^n(K^*, d_*)\subseteq\cdots \subseteq F^1H^n(K^*, d_*)\subseteq  F^0H^n(K^*, d_*)$$
dont le nombre des termes n'est pas fini en g\'en\'eral.

En fait, si on suppose que la filtration positive d\'ecroissante  $F^pK^*$ est r\'eguli\`ere i.e.~:
\begin{eqnarray*}
\forall q\geq 0, \quad \exists n(q)\geq 0,  \quad \forall p>u(q) \qquad \Longrightarrow \qquad
F^pK^q =0
\end{eqnarray*}
il en r\'esulte que pour chaque entier $n\geq 0$ fix\'e  la famille   $\{F^pK^n\ ; p\in\mathbb N\}$  est finie et que par cons\'equent  la famille d\'ecroissante de sous-espaces vectoriels $\{F^pH^n(K^*, d_*)\ ; p\in\mathbb N\}$  est  finie.

Consid\'erons un  complexe diff\'erentiel   $(K^*, d_*)$ d'espaces vectoriels r\'eels semi-norm\'es    et  supposons qu'il est muni d'une filtartion  positive d\'ecroissante r\'eguli\`ere not\'ee $F^pK^*$. Pour tout entier $r\geq 0$  on d\'efinit  les
sous-espaces vectoriels topologiques, \begin{itemize}
\item $Z_r^{p, q}=\{x\in F^pK^{p+q}\mid d_{_{p+q}}x\in
F^{p+r}K^{p+q+1}\}$ ; \item $B_r^{p, q}=\{x\in F^pK^{p+q}\mid \exists
y\in F^{p-r}K^{p+q-1}, d_{_{p+q-1}}y=x\}$.
\end{itemize}

Observons  que par construction des sous-espaces $Z_r^{p, q}$ et $B_r^{p, q}$ on a  les   inclusions,
$$d_{_{p+q}}(Z_r^{p, q})\subset Z_r^{p+r, q-r+1}\quad \mbox{et}
\quad d_{_{p+q}}(Z_{r-1}^{p+1,q-1}+B_{r-1}^{p, q})\subset
Z_{r-1}^{p+1+r,q-r} + B_{r-1}^{p+r, q-r+1}$$
Donc, si on munit l'espace vectoriel quotient $E_r^{p, q}=\displaystyle{Z_r^{p, q}\over Z_{r-1}^{p+1,
q-1}+ B_{r-1}^{p,q}}$ par la semi-norme de la topologique quotient on d\'eduit que  la diff\'erentielle $d_n : K^n
\rightarrow K^{n+1}$ induit une famille d'op\'erateurs lin\'eaires continus~:
$$\begin{array}{cccc}
d_r^{p, q} : & E_r^{p, q}& \longrightarrow & E_r^{p+r, q-r+1}\\
&  [x]& \longmapsto & [d_{p+q}(x)]\end{array}$$ de norme
$\parallel d_r^{p, q}\parallel \leq\parallel d_{_{p+q}}\parallel$ et
tels que $d_r^{p+r, q-r+1}\circ d_r^{p, q}=0$. En cons\'equence,  pour tout entier $r\geq 0$ le couple $(E_r^{*, *},
d_{r}^{*, *})$ est un complexe diff\'erentiel d'espaces vectoriels r\'eels
semi-norm\'es bi-gradu\'es.

Au   complexe diff\'erentiel filt\'e  $(K^*, d_*)$  on associe \'egalement  les familles de sous-espaces vectoriels topologiques suivants :
\begin{itemize}
\item $Z_{_{\infty}}^{p, q} = \{x\in F^pK^{p+q} / d_{_{p+q}}x=0\}$
; \item $B_{_{\infty}}^{p, q} = \{x\in F^pK^{p+q} /\exists y\in
K^{p+q-1}, d_{_{p+q-1}}y=x\}$ ; \item   $E_{_{\infty}}^{p,
q}=\displaystyle{Z_{_{\infty}}^{p, q}\over B_{_{\infty}}^{p, q} +
Z_{_{\infty}}^{p+1, q-1}}$ que l'on munit de la structure
topologique quotient.
\end{itemize}

Avec les notations ci-dessus on obtient donc les inclusions suivantes~:
$$ B_0^{p, q}\subseteq \cdots\subseteq B_r^{p, q}\subseteq B_{r+1}^{p, q} \subseteq \cdots\subseteq B_\infty^{p, q}\subseteq Z_\infty^{p, q}\subseteq  \cdots\subseteq  Z_{r+1}^{p, q}\subseteq Z_r^{p, q}\cdots \subseteq Z_0^{p, q}    $$
tel que pour $r\geq p$ on a~: $$B_\infty^{p, q}= \displaystyle \bigcup_{r\geq 0}B_r^{p, q}=B_r^{p, q}=B_{r+1}^{p, q}=\cdots =B_\infty^{p, q}$$  et par  r\'egularit\'e de la filtration $F^pK^*$ si on pose   $r_0=u(p+q+1)-p$ on v\'erifie que l'intersection~:
$$\displaystyle\bigcap_{s\geq 0}Z_s^{p, q} = Z_{r_0}^{p, q} = Z_{r_0+1}^{p, q} = \cdots = Z_\infty^{p, q}$$

En effet, gr\^ace \`a la r\'egularit\'e de la filtration $F^pK^*$  on d\'eduit  que la suite des sous-espaces vectoriels r\'eels $\{E_r^{p, q} \ ; \forall  r\in \mathbb N\}$ est stationaire. C'est-\`a-dire, \`a patir d'un certain rang  assez grand $r\geq 0$ on a,
$$E_{r}^{p, q} = E_{r+1}^{p,q}=\cdots= E_\infty^{p, q}$$

Suite \`a cette propri\'et\'e on dira que la famille de complexes diff\'erentiels bi-gradu\'es   $(E_r^{*, *}, d_r^{*,*})$  converge vers l'aboutissement $E_\infty^{*, *}$ et on d\'esignera ce fait par le symbole~:
$$ E_r^{p, q} \quad \Longrightarrow \quad E_\infty^{p, q} $$

Enfin, observons que puisque le sous-espace vectoriel  $Z_{_{\infty}}^{p, q}$ est contenu dans le sous-espace vectoriel des cocycles $\textrm{Ker}(d_{p+q})$  on obtient   une  surjection canonique continue, $$Z_{_{\infty}}^{p, q}\rightarrow
\displaystyle{F^pH^{p+q}(K^*, d_*)\over F^{p+1}H^{p+q}(K^*,
d_*)}$$ dont le  noyau  est \'egal \`a la somme $B_{_{\infty}}^{p, q} +
Z_{_{\infty}}^{p+1, q-1}$. Par cons\'equent,  comme la filtration $F^pK^*$ est    r\'eguli\`ere  on d\'eduit que pour tout entier $n\geq 0$  la famille  des  bijections  lin\'eaires  continues induites,  $\{E_{_{\infty}}^{p, n-p} :=\displaystyle\frac{Z_{_{\infty}}^{p, n-p}}{B_{_{\infty}}^{p, n-p} +
Z_{_{\infty}}^{p+1, n-p-1}} \rightarrow
\displaystyle{F^pH^{n}(K^*, d_*)\over F^{p+1}H^{n}(K^*, d_*)}\ ; \forall p\in\mathbb N\}$, est finie.

Avec les notations ci-dessus, pour tout complexe diff\'erentiel $(K^*, d_*)$  qui est muni d'une filtration positive  d\'ecroissante et r\'eguli\`ere $F^pK^*$  on a les affirmations suivantes (cf.  \cite{Bou3})~:
\begin{enumerate}
\item
Il existe une bijection canonique continue d\'efinie sur le premier terme $E_1^{p,
q}$ dans l'espace d'homologie relative
$H^{p+q}(\displaystyle{F^pK^*/F^{p+1}K^{*}})$  qui envoie la
diff\'erentielle $d_1$ sur l'op\'erateur de bord (i.e. connexion) associ\'e au triplet
$(F^pK^*, F^{p+1}K^*, F^{p+2}K^*)$.

\item Pour chaque entier  $r\geq 0$ il existe une bijection
 continue, $E_{_{r+1}}^{p, q}
\stackrel{\sim}{\rightarrow} H^{p, q}(E_{_{r}}^{*, *}, d_r).$ Autrement dit, la famille de complexes diff\'erentiels bi-gradu\'es $(E_r^{*,*}, d_r^{*,*})$ est une suite spectrale dont les termes sont des espaces vectoriels r\'eels semi-norm\'es.

\item Il existe une bijection continue qui envoie la
somme directe topologique  $E_\infty^n = \displaystyle{\bigoplus_{p=0}^{p=n}}
E_\infty^{p, n-p}$ sur la somme directe topologique
$\displaystyle{\bigoplus_{p=0}^{p=n}} \displaystyle{F^pH^n(K^*,
d_*)\over F^{p+1}H^n(K^*, d_*)}\stackrel{\sim}{\rightarrow}H^{p+q}(K^*, d_*)$. En cons\'equence,
la suite spectrale $(E_r^{*, *}, d_r^{*, *})$ converge vers l'espace vectoriel de la cohomologie $H^n(K^*, d_*)$ i.e.~:
$$ E_r^{p, q} \quad \Longrightarrow \quad  E_\infty^{p+q}\stackrel{\sim}{\rightarrow} H^{p+q}(K^*, d_*)$$\end{enumerate}

\subsubsection{La suite  spectrale de Hochschild-Serre} La  donn\'ee d'une extension de groupes discrets $1 \rightarrow G\stackrel{i}{\longrightarrow}\Gamma
\stackrel{\sigma}{\longrightarrow} \Pi\rightarrow 1$ et d'un $\Gamma$-module de Banach $V$ permet de construire  un complexe diff\'erentiel double  dont le terme g\'en\'eral est d\'efini par,
$$ \forall p, q\in\mathbb N, \quad K^{p, q}:=C_b^p(\Pi, U^q) \qquad \textrm{ o\`u }\qquad U^q:= {\mathcal L}_{G}
(\mathbb R[\Gamma^{q+1}], V)$$

Comme au paragraphe 3.1, puisque les \'el\'ements de l'espace vectoriel $K^{p, q}$ sont des cocha\^\i nes born\'ees,   on  d\'efinit une  diff\'erentielle  horizontale  $d_\Pi :
K^{p, q} \rightarrow K^{p+1, q}$ et une  diff\'erentielle  verticale $d_U : K^{p, q} \rightarrow
K^{p, q+1}$ telles que $$d_\Pi d_\Pi=0, \quad d_Ud_U=0 \quad \textrm{  et  } \quad d_\Pi d_U=d_Ud_\Pi$$
De m\^eme, au complexe diff\'erentiel double $(K^{*, *}, d_\Pi, d_U)$ on associe un complexe diff\'erentielle totale $(\textrm{Tot}(K^{*, *}), d_*)$ de degr\'e $+1$ o\`u pour tout entier $n\in\mathbb N$ on pose~:
$$\textrm{Tot}(K^{*, *})^n :=\bigoplus_{p+q=n}K^{p, q} \quad \textrm{ et  }
\quad d_* :=d_\Pi+(-1)^pd_U$$

Le complexe diff\'erentiel total $(\textrm{Tot}(K^{*, *}), d_*)$  poss\`ede deux filtrations positives d\'ecroissantes et r\'eguli\`eres   verticle et horizontale not\'ees respectivement~:
$$F_v^p\textrm{Tot}(K^{*, *}) :=\displaystyle\sum_{i\geq p}K^{i, j} \qquad  \textrm{et}
 \qquad  F_h^q\textrm{Tot}(K^{*, *}) :=\displaystyle\sum_{j\geq q} K^{i, j}$$

Donc, aux  filtartions   $(F_h^p\textrm{Tot}(K^{*, *}), d_*)$ et $(F_v^p\textrm{Tot}(K^{*, *}), d_*)$ on associe deux suites spectrales not\'ees respectivement $E_{r, h}^{*, *}$ et $E_{r, v}^{*, *}$  qui convergent vers la cohomologie du complexe
diff\'erentiel total, $(\textrm{Tot}(K^{*, *}), d_*= d_\Pi+(-1)^pd_U)$ et   v\'erifient les propri\'et\'es suivantes (cf. \cite{Bou3})~:
\begin{enumerate}
\item La suite spectrale $E_{r, h}^{*, *}$  d\'eg\'en\`ere au premier terme (i.e., $E_{r, h}^{p, q}=0, \forall p\geq 1, q\geq 0$) et  son aboutissement $E_{\infty, h}^n=H^n_b(\Gamma, V)$. Donc,  la suite spectrale $E_{r, h}^{p, q}$ converge vers l'espace de la cohomologie born\'ee $H_b^{p+q}(\Gamma, V)$.

\item Il existe une bijection  continue du  terme
$E_{1, v}^{p, q}$ dans l'espace vectoriel  semi-norm\'e $C_b^p(\Pi, H_b^q(G, V))$
des $p$-cocha\^\i nes non homog\`enes born\'ees sur le groupe $\Pi$ \`a
coefficients dans l'espace semi-norm\'e $H_b^q(G, V)$, en plus, la diff\'erentielle $d_{_{v, 1}}$ s'envoie sur la diff\'ererntielle $d_{_{\Pi}}$.

\item Il existe une bijection  continue du terme
$E_{2, v}^{p, q}$ dans $H_b^p(\Pi, H_b^q(G, V))$ espace de cohomologie
born\'ee associ\'e au complexe diff\'erentiel  $(C_b^*(\Pi, H_b^q(G,
V)), d_{_{\Pi}})$.

\item La suite spectrale $(E_{_{r, v}}^{p, q}, d_{_{r, v}})$ converge
vers la cohomologie born\'ee du groupe $\Gamma$ \`a coefficients dans
le $\Gamma$-module $V$ i.e.~:
$$ E_{2, v}^{p, q} \stackrel{\sim}{\rightarrow} H_b^p(\Pi, H_b^q(G, V)) \Longrightarrow
H_b^{p+q}(\Gamma, V)$$
\end{enumerate}

Pour finir ce paragraphe nous d\'emontrons le lemme suivant qui donne des renseigements  sur la structure des termes  $E_3^{n, 0}$ et $E_3^{n, 2}$  utiles pour la preuve des   th\'eor\`emes  B et C.

\begin{lemma} Si $V$ est un  $\Gamma$-module de Banach trivial alors   on a les assertions suivantes~:
\begin{enumerate}
\item Le terme $E_3^{n, 0}=E_2^{n, 0}$, donc il existe une bijection lin\'eaire continue sur le terme $E_3^{n, 0}$  \`a valeurs  dans l'espace de cohomologie born\'ee $H_b^n(\Pi, V)$.
\item Il existe une suite exacte  d'op\'erateurs lin\'eaires continues, $$E_2^{n-2, 3}\stackrel{d_2^{n-2, 3}}{\longrightarrow} E_2^{n, 2}\longrightarrow E_3^{n, 2}\rightarrow 0$$ En cons\'equence, il existe une bijection lin\'eaire continue sur le terme $E_3^{0, 2}$ \`a valeurs dans $H_b^2(G, V)^\Pi$ espace des classes de  cohomologie born\'ee $\Pi$-invariantes.
\end{enumerate}  \end{lemma}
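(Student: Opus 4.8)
Pour d\'emontrer ce lemme, je n'aurais besoin que de deux ingr\'edients d\'ej\`a \'etablis. D'une part, la bijection lin\'eaire continue $E_2^{p, q}\stackrel{\sim}{\rightarrow}H_b^p(\Pi, H_b^q(G, V))$ (cf. 3.2.2) et la bijection lin\'eaire continue $E_{r+1}^{p, q}\stackrel{\sim}{\rightarrow}H^{p, q}(E_r^{*, *}, d_r)$ (cf. 3.2.1). D'autre part, le fait que le groupe $G$ agit trivialement sur $V$, puisque $V$ est un $\Gamma$-module de Banach trivial~; il en r\'esulte que $H_b^1(G, V)=0$ et que $H_b^0(G, V)=V^G=V$. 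Enfin, j'utiliserais que la suite spectrale est concentr\'ee dans le premier quadrant~: comme $K^{p, q}=C_b^p(\Pi, U^q)$ n'est d\'efini que pour $p, q\geq 0$, on a $E_r^{p, q}=0$ d\`es que $p<0$ ou $q<0$.

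Pour la premi\`ere assertion, le terme $E_3^{n, 0}$ s'identifie via la bijection continue de 3.2.1 \`a l'homologie $H^{n, 0}(E_2^{*, *}, d_2)$. Il suffit donc de constater que les deux diff\'erentielles $d_2$ intervenant en $(n, 0)$ sont nulles. La diff\'erentielle sortante $d_2^{n, 0} : E_2^{n, 0}\rightarrow E_2^{n+2, -1}$ est nulle, son but $E_2^{n+2, -1}$ \'etant nul (second indice n\'egatif). La diff\'erentielle entrante $d_2^{n-2, 1} : E_2^{n-2, 1}\rightarrow E_2^{n, 0}$ est nulle, car sa source v\'erifie $E_2^{n-2, 1}\stackrel{\sim}{\rightarrow}H_b^{n-2}(\Pi, H_b^1(G, V))=H_b^{n-2}(\Pi, 0)=0$. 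On obtient ainsi $E_3^{n, 0}=E_2^{n, 0}$, puis en composant avec la bijection de 3.2.2 la bijection lin\'eaire continue $E_3^{n, 0}\stackrel{\sim}{\rightarrow}H_b^n(\Pi, H_b^0(G, V))=H_b^n(\Pi, V)$.

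Pour la seconde assertion, j'appliquerais le m\^eme principe en $(n, 2)$. La diff\'erentielle sortante $d_2^{n, 2} : E_2^{n, 2}\rightarrow E_2^{n+2, 1}$ a pour but $E_2^{n+2, 1}\stackrel{\sim}{\rightarrow}H_b^{n+2}(\Pi, H_b^1(G, V))=0$, elle est donc nulle et $Ker(d_2^{n, 2})=E_2^{n, 2}$. Par cons\'equent $E_3^{n, 2}\stackrel{\sim}{\rightarrow}H^{n, 2}(E_2^{*, *}, d_2)=E_2^{n, 2}/Im(d_2^{n-2, 3})$, ce qui donne exactement la suite exacte d'op\'erateurs continus
$$E_2^{n-2, 3}\stackrel{d_2^{n-2, 3}}{\longrightarrow} E_2^{n, 2}\longrightarrow E_3^{n, 2}\rightarrow 0,$$
la fl\`eche du milieu \'etant la projection quotient. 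Je sp\'ecialiserais ensuite en $n=0$~: la source $E_2^{-2, 3}$ est nulle (premier indice n\'egatif), d'o\`u $Im(d_2^{-2, 3})=0$ et $E_3^{0, 2}=E_2^{0, 2}$. En composant avec la bijection de 3.2.2 et en utilisant $H_b^0(\Pi, W)=W^\Pi$, j'obtiendrais la bijection lin\'eaire continue $E_3^{0, 2}\stackrel{\sim}{\rightarrow}H_b^0(\Pi, H_b^2(G, V))=H_b^2(G, V)^\Pi$.

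Aucune \'etape ne pr\'esente de r\'eelle difficult\'e~: l'argument est purement formel d\`es que l'on dispose de l'identification du terme $E_2$ et de l'annulation $H_b^1(G, V)=0$. Le seul point demandant un peu d'attention est d'ordre topologique, la suite spectrale vivant dans la cat\'egorie semi-norm\'ee~: il faut s'assurer que les identifications $E_3^{*, *}\stackrel{\sim}{\rightarrow}H^{*, *}(E_2, d_2)$ sont bien des bijections lin\'eaires continues et que leur composition avec les bijections de 3.2.2 le demeure, de sorte que les \'egalit\'es $E_3^{n, 0}=E_2^{n, 0}$ et $E_3^{0, 2}=E_2^{0, 2}$ s'entendent \`a bijection lin\'eaire continue canonique pr\`es.
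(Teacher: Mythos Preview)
Your proof is correct and follows essentially the same approach as the paper's own demonstration: both rely on the identification $E_2^{p,q}\stackrel{\sim}{\rightarrow}H_b^p(\Pi, H_b^q(G, V))$, the vanishing $H_b^1(G,V)=0$ for a trivial module, and the first-quadrant nature of the spectral sequence to kill the relevant incoming and outgoing $d_2$-differentials. Your exposition is in fact slightly more detailed than the paper's, and your closing remark on the semi-normed subtleties is well taken and consistent with the paper's conventions in 3.2.1.
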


\begin{proof}[D\'emonstration] 1) Rappelons que le  terme $E_3^{p, q}$ est \'egal \`a la cohomologie du complexe diff\'erentiel $(E_2^{*, *}, d_2^{*,*})$ (\`a une bijection continue pr\`es),  donc pour tout entier $n\geq 0$ on a~:
$$ E_3^{n, 0} \stackrel{\sim}{\rightarrow}   \displaystyle\frac{\textrm{Ker}(d_2^{n,0} : E_2^{n, 0}\rightarrow E_2^{n+2, -1})}{\textrm{Im}( d_2^{n-2,1} :  E_2^{n-2, 1}\rightarrow E_2^{n, 0})} \quad \textrm{ et } \quad E_3^{n, 2} \stackrel{\sim}{\rightarrow}
 \displaystyle\frac{\textrm{Ker}(d_2^{n,2} : E_2^{n, 2}\rightarrow E_2^{n+2, 1})}{\textrm{Im}( d_2^{n-2,3} :  E_2^{n-2, 3}\rightarrow E_2^{n, 2})}  $$

Puisque $V$ est un $\Gamma$-module trivial,  l'espace vectoriel $H_b^1(G, V)=\{0\}$ et par suite le terme $E_2^{n-2, 1}\stackrel{\sim}{\rightarrow} H_b^{n-2}(\Pi, H_b^1(G, V))=\{0\}$. De plus,   comme le terme $E_2^{n+2, -1}=\{0\}$
on d\'eduit que le terme $E_3^{n, 0}=E_2^{n, 0}\stackrel{\sim}{\rightarrow}H_b^n(\Pi, V)$.

\noindent 2) Puisque  le terme $E_2^{n+2, 1}\stackrel{\sim}{\rightarrow}H_b^{n+2}(\Pi, H_b^1(G, V))=\{0\}$ cela implique que  la suite d'op\'erateurs lin\'eaires continus   $E_2^{n-2, 3}\stackrel{d_2^{n-2,3}}{\rightarrow} E_2^{n, 2}\rightarrow E_3^{n, 2}\rightarrow 0$ est exacte. Ainsi, si on prend $n=0$  il s'ensuit que le terme $E_2^{n-2, 3}=\{0\}$ et que $E_3^{0, 2}=E_2^{0, 2}\stackrel{\sim}{\rightarrow}H_b^0(\Pi, H_b^2(G, V))$. Donc,  le terme  $E_3^{0, 2}\stackrel{\sim}{\rightarrow}H_b^2(G, V)^\Pi$.
\end{proof}

\subsection{Cup-produit}
Soient $U$, $V$ et $W$  trois  $G$-modules de Banach et $\mu : U\times
V\rightarrow W$ un op\'erateur bilin\'eaire continu $G$-\'equivariant.
Pour tout couple de cocha\^\i nes born\'ees non homog\`enes $f\in C_b^p(G, U)$
et $h\in C_b^q(G, V)$ on d\'efinit leurs  cup-produit $f\cup h\in C_b^{p+q}(G,
W)$ par la formule suivante (cf. \cite{Bro}) :
\begin{eqnarray}
f\cup h([g_1\mid \cdots\mid g_{_{p+q}}])=\mu(f([ g_1\mid \cdots\mid
g_p])\otimes
g_1g_2\cdots g_p\cdot h([ g_{_{p+1}}\mid \cdots\mid g_{_{p+q}}])
\end{eqnarray}

Il est facile de v\'erifier que le cup-produit $\cup : C_b^*(G, U) \times
C_b^*(G, V)\rightarrow C_b^*(G, W)$ est continu, associatif et commute avec la
diff\'erentielle $d_*$ dans la formule suivante,
\begin{eqnarray}
d_{p+q}(f\cup h) = d_p(f)\cup h + (-1)^pf\cup d_q(h)
\end{eqnarray}

Ainsi, en passant en cohomologie, l'expression $(6)$ induit un cup-produit sur
les espaces de cohomologie born\'ee qui sera  not\'e aussi~:
$$
\begin{array}{ccccccc}
\cup : &H_b^p(G, U)\times H_b^q(G, V)& \longrightarrow & H_b^{p+q}(G, W) \\
& ([f], [h])& \longmapsto &[f]\cup [h]:=[f\cup h]
\end{array}
$$

Dans le reste de ce paragraphe, on donnera des exemples de cup-produit utiles
pour la suite de ce travail.

Notons d'abord que gr\^ace \`a la fonctorialit\'e de la cohomologie born\'ee et
de l'homologie $\ell^1$, la donn\'ee d'une repr\'esentation ext\'erieure
$\theta : \Pi\rightarrow Out(G)$ permet de munir les deux espaces $H_b^2(G,
\mathbb R)$ et $\overline{H}_2^{\ell_1}(G, \mathbb R)$ d'une structure de
$\Pi$-module de Banach. Notons aussi que le crochet de dualit\'e $<, > :
H_b^2(G, \mathbb R)\times \overline{H}_2^{\ell_1}(G, \mathbb R) \rightarrow
\mathbb R$ (\'evaluation) r\'ealise une forme bilin\'eaire
$\Pi$-\'equivariante. En effet,  l'action de  $\Pi$ sur $\mathbb R$ \'etant
triviale   on aura pour tous les \'el\'ements $\alpha\in \Pi$, $x\in H_b^2(G,
\mathbb R)$ et $y\in \overline{H}_2^{\ell_1}(G, \mathbb R)$ :
$$<\theta(\alpha)_b(x), \theta(\alpha)_*^{-1}(y)>=<x, \theta(\alpha)_*\circ \theta(\alpha)_*^{-1}(y)>=<x, y>.$$

Par cons\'equent, l'expression $(6)$ induit  un cup-produit sur la cohomologie
born\'ee du groupe $\Pi$ qu'on notera~:
$$\cup : H_b^p(\Pi, \overline{H}_2^{\ell_1}(G, \mathbb R))\times
H_b^q(\Pi, H_b^2(G, \mathbb R)) \rightarrow H_b^{p+q}(\Pi, \mathbb R), \qquad
\forall p, q\in\mathbb N.$$

Ce cup-produit $\cup$  qu'on vient de d\'efinir poss\`ede deux cas particuliers
tr\`es utiles pour la suite de ce travail :

\noindent  1) Si on suppose $\Pi=G$ et que  $\theta=1$ est la repr\'esentation
triviale on obtient alors un cup-produit sur les espaces de cohomologie
born\'ee \`a coefficients triviaux :
$$\cup : H_b^p(G, \overline{H}_2^{\ell_1}(G, \mathbb R))\times
H_b^q(G, H_b^2(G, \mathbb R)) \longrightarrow H_b^{p+q}(G, \mathbb R), \qquad
\forall p, q\in\mathbb N.$$

\noindent 2) Supposons que la repr\'esentation ext\'erieure $\theta : \Pi
\rightarrow Out(G)$ est associ\'ee \`a une extension de groupes discrets
$1\rightarrow G\stackrel{i}{\rightarrow}  \Gamma\stackrel{\sigma}{\rightarrow}
\Pi\rightarrow 1$. Ensuite, consid\'erons les trois complexes diff\'erentiels
doubles $K^{p,q}=C_b^p(\Pi, \mathcal{L}_G(\mathbb R[\Gamma^{q+1}], \mathbb R))
$, $K_\infty^{p,q}=C_b^p(\Pi, \mathcal{L}_G(\mathbb R[\Gamma^{q+1}], H_b^2(G,
\mathbb R))$ et $K_{\ell_1}^{p,q}=C_b^p(\Pi, \mathcal{L}_G(\mathbb
R[\Gamma^{q+1}], \overline{H}_2^{\ell_1}(G, \mathbb R))$ o\`u chacun d'eux est
muni de la filtration positive d\'ecroissante naturelle  suivant le degr\'e $p$ (cf.  3.2.2). Ainsi, avec
ces donn\'ees,   on obtient  trois suites  spectrales de
Hochschild-Serre not\'ees respectivement   $$(E_{_{r}}^{p, q}, d_r), \qquad
(E_{_{r, \infty}}^{p, q},  d_{_{r, \infty}}) \qquad \textrm{ et  } \qquad
(E_{_{r, \ell_1}}^{p, q}, d_{_{r, \ell_1}})$$

Enfin, si on consid\`ere le crochet de dualit\'e $<, > : H_b^2(G, \mathbb
R)\times \overline{H}_2^{\ell_1}(G, \mathbb R) \rightarrow \mathbb R$ on
obtient un cup-produit au niveau des complexes diff\'erentiels doubles, $\cup :
K_\infty^{p,q}\times K_{\ell_1}^{p,q}\rightarrow K^{p,q}$, qui induit par
suite un cup-produit au niveau des suites spectrales,
$$\cup : E_{_{r, \infty}}^{p, q}\times E_{_{r, \ell_1}}^{p', q'}\rightarrow E_{_{r}}^{p+p', q+q'}$$
qui commute avec les trois  diff\'erentielles $d_{_{r, \infty}}$, $d_{_{r,
\ell_1}}$ et $d_{_{r}}$ dans la relation suivante :
\begin{eqnarray}
d_r^{p+p', q+q'}(x_{_{\infty}}^{p, q}\cup x_{_{\ell_1}}^{p', q'}) =
d_{_{r, \infty}}^{p, q}(x_{_{\infty}}^{p, q})\cup x_{_{\ell_1}}^{p', q'} +
(-1)^{p+q}x_{_{\infty}}^{p, q}\cup d_{_{r, \ell_1}}^{p', q'}(x_{_{\ell_1}}^{p',
q'}).\nonumber
\end{eqnarray}

\subsection{Quasi-morphismes et  $2$-cocycles born\'es}
Ce paragraphe est enti\`erement extrait de \cite{Bou} et \cite{Bou1}.

\subsubsection{G\'en\'eralit\'es sur les extensions centrales}

Soient $G$ un groupe discret et $c\in Z^2(G, \mathbb R)$ un 2-cocycle r\'eel
non d\'eg\'en\'er\'e i.e. $c(g, 1)=c(1, g)=0, \forall g\in G$.

Observons que si on munit le produit cart\'esien $\mathbb R\times G$ par la multiplication interne
$$(t, g)\cdot (u, h) := (t + u + c(g, h), gh)$$
on obtient un groupe not\'e,  $\mathbb R\times_c G$,  tel que l'injection canonique $j(t)=(t, 1)$ et la surjection canonique $p(t, g)=g$ deviennent des hommorphismes et que le sous-groupe image $j(\mathbb R)=\textrm{Ker}(p)$ est central dans le groupe $\mathbb R\times_c G$. Notons aussi que  si on remplace $c$ par le $2$-cocycle  $c'=c + df$, avec $f : G\rightarrow \mathbb R$ est une cocha\^\i nes telle que $f(1)=0$, on obtient  un isomorphisme de groupes $F : \mathbb R\times_{c'} G\longrightarrow \mathbb R\times_c G$ d\'efini par $F(t, g) = (t + f(g) , g)$ o\`u $F(t, 1)=(t, 1), \forall t\in\mathbb R$.

En cons\'equence, un $2$-cocycle $c\in Z^2(G, \mathbb R)$ induit une suite exacte courte
$$\xymatrix{ 0\ar@{->}[r]  & \mathbb R\ar@{->}[r]^{j} & \mathbb R\times_c G
\ar@{->}[r]^{p} & G\ar@{->}[r]& 1 }
 $$
qui r\'ealise une extension centrale du groue $G$ par $\mathbb R$ et  qui ne d\'epend que de la classe de cohomologie $[c]\in H^2(G, \mathbb R)$.

Inversement, \'etant donn\'ee une extension centrale $\xymatrix{ 0\ar@{->}[r]  & \mathbb R\ar@{->}[r]^{j} & \overline{G}\ar@{->}[r]^{p} & G\ar@{->}[r]& 1 }$,  en fixant  une section ensembliste $s : G \rightarrow {\overline G}$ de la projection $ p : {\overline G} \rightarrow G$ (i.e. $p\circ s=id$) telle que $s(1)=1$ on v\'erifie que l'expression \begin{eqnarray}
c(g_1, g_2)=s(g_1)s(g_2)s(g_1g_2)^{-1}
 \end{eqnarray}
d\'efinie un  $2$-cocycle non d\'eg\'en\'er\'e $c\in Z^2(G, \mathbb R)$ (cf. \cite{Bro} et \cite{Ma}).

D'autre part, puisque pour tout \'el\'ement ${\bar g}\in {\overline G}$, les deux
\'el\'ements $\bar g$ et $s\circ  p({\bar g})$ se proj\`etent {\it via} la surjection $p$ sur
$p({\bar g})\in G$, il existe un unique \'el\'ement central $\Phi(\bar
g)\in\mathbb R$ tel que :
\begin{eqnarray}
{\bar g}=s\circ p(\bar g)\Phi(\bar g).
\end{eqnarray}

\begin{affirmation} La 1-cocha\^\i ne r\'eelle $\Phi :
{\overline G} \rightarrow \mathbb R$ d\'efinie par (9) v\'erifie les
propri\'etes suivantes o\`u on consid\`ere $\mathbb R\simeq Ker(p)$ comme un
groupe additif~:
\begin{enumerate}
\item $\forall {\bar g_1}, \ {\bar g_2}\in {\overline
    G}, \quad p^{*}(c)({\bar g_1}, {\bar g_2})=\Phi({\bar g_1}{\bar
    g_2})-\Phi(\bar g_1)- \Phi(\bar g_2)$.
\item $\forall t\in \mathbb R, \quad  \Phi(t)=t$.
\item $\forall t\in \mathbb R, \forall {\bar g}\in {\overline G}, \quad
    \Phi({\bar g}t)=\Phi(\bar g)+t$.
\item $\phi\circ s=0$.
\end{enumerate}

En cons\'equence, la correspondance $F(\bar{g}) := (\Phi(\bar g), p(\bar g))$ est un isomorphisme d'extensions centrales de $\overline{G}$ dans $\mathbb R\times_c G$.
\end{affirmation}
\begin{proof}[D\'emonstration] 1) Soient ${\bar g_1}$ et ${\bar g_2}\in{
\overline G}$, des expressions  (8) et  (9) il r\'esulte que :
\begin{eqnarray}
p^{*}(c)({\bar g_1}, {\bar g_2})&= & s\circ p(\bar g_1)s\circ p(\bar
g_2)(s\circ p({\bar g_1}{\bar g_2}))^{-1}\nonumber\\
&=& {\bar g_1}\Phi(\bar g_1)^{-1}{
\bar g_2}\Phi(\bar g_2)^{-1}({\bar g_1}{\bar g_2}\Phi({\bar g_1}{\bar
g_2})^{-1})^{-1}\nonumber\\
&=& \Phi({\bar g_1}{\bar g_2})-\Phi(\bar g_1)-\Phi(\bar g_2). \nonumber
\end{eqnarray}

2) Puisque pour tout $t\in\mathbb R$, $p(t)=1$,  la relation (9)
implique $\Phi(t)=t$.

3) Par construction on a ${\bar g}t=s\circ p({\bar g}t)\Phi({\bar g}t), \forall t\in \mathbb R$ et ${\bar g}\in {\overline G}$. Ainsi, comme $p({\bar g}t)=p({\bar
g})$ on obtient $\Phi({\bar g}t)=\Phi({\bar g})+t$.

4) Puisque pour $g\in G$ on a $s(g)=s\circ p(s(g))\Phi\circ s(g)$ et  $p\circ s(g)=g$, donc $\Phi\circ s(g)=0$. \end{proof}

Suite aux discussions pr\'ec\'edentes on conclut qu'il existe une correspondance bijective entre les classes de cohomologie $x\in H^2(G, \mathbb R)$ et les extensions cetrales du groupe $G$ dont le noyau est isomorphe avec  $\mathbb R$ (cf. \cite{Bro} et \cite{Ma}).

\subsubsection{G\'en\'eralit\'es sur les quasi-morphismes}

\begin{definition}
Soit $G$ un groupe discret, on dira que l'application $\Phi : G\rightarrow
\mathbb R$ est un quasi-morphisme s'il existe un r\'eel $k>0$ tel que pour tous
$g$ et $h$ \'el\'ements de $G$ on a,
$$\mid \Phi(gh) - \Phi(g) - \Phi(h)\mid<k.$$

Si en plus, pour tout entier $n\in\mathbb Z$ on a $\Phi(g^n)=n\Phi(g)$ on dira
que $\Phi$ est un quasi-morphisme homog\`ene.
\end{definition}

Notons que si on applique l'affirmation 1,  on voit que pour tout 2-cocycle born\'e  $c : G\times G \rightarrow
\mathbb R$ la 1-cocha\^ \i ne $\Phi :
{\overline G} \rightarrow \mathbb R$ qui lui est associ\'ee par l'expression
(9) est un quasi-morphisme. Ce quasi-morphisme n'est pas n\'ecessairement homog\`ene,
mais on peut le rendre homog\`ene en posant, \begin{eqnarray} \varphi(\bar
g)=\displaystyle{\lim_{n \rightarrow +\infty}}\displaystyle{1\over n}\Phi({\bar
g}^n).\end{eqnarray}

\begin{affirmation}
 Le quasi-morphisme homog\`ene $\varphi : {\overline G} \rightarrow \mathbb R$ d\'efini par (10)
 v\'erifie les propri\'et\'es suivantes~:
\begin{enumerate}
\item $\forall t\in \mathbb R, \forall {\bar g}\in {\overline G},
    \varphi({\bar g}t)=\varphi(\bar g)+t$, en particulier $\varphi(t)=t$
    pour tout $t\in \mathbb R$.
\item La cocha\^\i ne r\'eelle $b'=\varphi-\Phi$ est born\'ee et $\mathbb
    R$-invariante (i.e. $b'(t\overline{g}) = b'(\overline{g})$).
\end{enumerate}
\end{affirmation}
\begin{proof}[D\'emonstration] 1) Puisque $\mathbb R$ est contenu dans le centre du groupe $\overline
G$, pour tout $n\in \mathbb N$ et pour tout \'el\'ement ${\bar g}\in {\overline
G}$ on a $({\bar g}t)^n={\bar g}^nt^n$. Par application de $\Phi$, puis par
passage \`a la limite on obtient 1).

\noindent 2) Puisque $\Phi$ est un quasi-morphisme il existe  un r\'eel $k>0$ tel que
pour tous les \'el\'ements ${\bar g}$ et ${\bar h}$ de $\overline G$,
$\mid\Phi({\bar g}{\bar h})-\Phi(\bar g)-\Phi(\bar h)\mid \leq k$. Donc,  pour
tout ${\bar g}\in {\overline G}$ et pour tout $n\in \mathbb N$, on a $\mid
\Phi({\bar g}^n)-n\Phi(\bar g)\mid\leq (n-1)k$. Ainsi, par passage \`a
la limite sur $n$ on d\'eduit que $b'=\varphi-\Phi$ est une cocha\^ \i ne born\'ee, qui est
$\mathbb R$-invariante puisque $\Phi$ et $\varphi$ v\'erifient
 la propri\'et\'e 1).  \end{proof}

 La propri\'et\'e 2) de l'affirmation 2 montre que la cocha\^ \i ne born\'ee
$b'=\varphi-\Phi$ induit sur le groupe $G$ une cocha\^ \i ne born\'ee $b : G
\rightarrow \mathbb R$
 telle que $b\circ p=b'$. Ainsi, en posant $$s_0(g)=s(g)b(g)^{-1}, \qquad  \forall g\in G$$
 on obtient une section ensembliste \`a la projection $p$ telle que $s_0(1)=1$.
  De plus, puisque pour tout  ${\bar g}\in {\overline G}$ l'\'el\'ement  $ s_0\circ p(\bar
  g)$ se projetent sur  $p(\bar g)\in G$,  il existe
  donc un  \'el\'ement central $h(\bar g)\in \mathbb R$ tel que~:
 \begin{eqnarray}
   {\bar g}=h(\bar g)s_0\circ p(\bar g).
   \end{eqnarray}

   \begin{affirmation} Avec les notations ci-dessus on a les affirmations
   suivantes~:
   \begin{enumerate}
   \item La cocha\^ \i ne $h : {\overline G}\rightarrow \mathbb R$
       d\'efinie par  (11) est \'egale au quasi-morphisme homog\`ene
       $\varphi : {\overline G}\rightarrow \mathbb R$ qui est associ\'e \`a $\Phi$ par l'expression
       (10).

  \item Le 2-cocycle born\'e $\bar c$ induit par le cobord $\mathbb
      R$-invariant $-d\varphi$ est cohomologue au 2-cocycle born\'e $c$.
      Plus pr\'ecis\'ement, on a ${\bar c}-c=-db$, o\`u $b$ est une cocha\^ \i ne born\'ee.
    \end{enumerate}
    \end{affirmation}
\begin{proof}[D\'emonstration] 1) Si \`a l'\'el\'ement  ${\bar g}\in {\overline G}$ on applique
 les formules (9) et (11) simultan\'ement, on obtient l'expression
$${\bar g}=s\circ p(\bar g)\phi(\bar g)=s_0\circ p(\bar g)h(\bar g)=s
\circ p(\bar g)b(p(\bar g))^{-1}h(\bar g)$$
de laquelle on  d\'eduit que $h(\bar g)-\Phi(\bar g)=b(p(\bar g))=b'(\bar g)$ (cf. aff. 2). C'est-\`a-dire on a $h=\varphi$.

\noindent 2) Un calcul direct sur le d\'efaut des sections $s$ et $s_0$ montre
que ${\bar c}-c=-db$. \end{proof}

Suite aux assertions de l'affirmation 3 on peut maintenant   interpr\'eter une classe de
cohomologie born\'ee r\'eelle $x\in H_b^2(G, \mathbb R)$ par la donn\'ee d'une extension
centrale
$$\xymatrix{ 0\ar@{->}[r]  & \mathbb R\ar@{->}[r]^{j} & \overline{G}\ar@<-3pt>@{->}[r]_{p}
 \ar@<3pt>@{<-}[r]^{s_x}\ar@<3pt>@{->}[d]_{\varphi} & G\ar@{->}[r]& 1\\
 && \mathbb R
}$$ munie d'une section ensembliste $s_x : G
\rightarrow {\overline G}$ de la projection $p$ telle que le quasi-morphisme $\varphi : {\overline G} \rightarrow \mathbb R$ qui lui est  associ\'e par la formule (9) est homog\`ene et avec $\varphi(s_x(g))=0, \forall g\in G$.

L'affirmation suivante se d\'emontre en utilisant le fait que  $\varphi$ est un quasi-morphisme homog\`ene associ\'e \`a la section $s_x$~:
\begin{affirmation} Avec les notations ci-dessus on a les assertions suivantes~:
\begin{enumerate}
\item $\forall g\in G, \forall n\in\mathbb Z, s_x(g^n)=(s_x(g))^n$.
\item La section $s_x : G\rightarrow \overline{G}$ commute avec la
    conjugaison i.e.~:   $$s_x(ghg^{-1})=s_x(g)s_x(h)s_x(g)^{-1}, \quad \forall
    g, h\in G.$$
\item Si on d\'esigne par $Z(G)$ le centre du groupe $G$ alors pour tous
    les \'el\'ements $g\in G$ et $z\in Z(G)$ on a $s_x(z)\in
    Z(\overline{G})$ et  $s_x(gz) = s_x(g)s_x(z)$.
\end{enumerate}

En cons\'equence,  la  restriction de $s_x$ \`a $Z(G)$ (\`a valeurs dans  $Z(\overline{G})$) (reps. $\varphi$ \`a $Z(\overline{G})$) est un homomorphismes tels que pour tout  $\overline{z}\in Z(\overline G)$ on a~:
        $$\overline{z}=s_x\circ p(\overline z) + j\circ \varphi(\overline
        z) \qquad \textrm{ et } \qquad
      \varphi(\overline{z}\ \overline{g}) = \varphi(\overline{z}) +
      \varphi(\overline{g})$$
Autrement dit, en tant que $\mathbb Z$-modules on a $Z(\overline G) = \mathbb R\bigoplus Z(G)$.
 \end{affirmation}

La section $s_x$ \'etudi\'ee ci-dessus n'est pas unique dans l'ensemble des sections de la
 projection $p$ qui induisent par (9) un quasimorphisme homog\`ene. En effet, si on multiplie $s_x$ par un homomorphisme $m : G \rightarrow \mathbb R$ on obtient
 une nouvelle section $s_1=s_x.m$ de $p$ dont le quasi-morphisme homog\`ene associ\'e par (9) est \'egal \`a
$\varphi_1=\varphi-m\circ p$. Cependant, si
on note $c_x$ le 2-cocycle born\'e obtenu par la formule (8) \`a partir de la section $s_x$ on voit  alors
que le 2-cocycle born\'e associ\'e \`a la section  $s_1$ par la formule (8) reste \'egal \`a $c_x$.

La proposition suivante r\'esume les propri\'etes
du 2-cocycle born\'e $c_x$  qui sont en effet essentielles
pour le reste de ce travail.

 \begin{proposition}[cf. \cite{Bou1}]
 Pour toute classe de cohomologie born\'ee
 $x\in H_b^2(G, \mathbb R)$ il existe un 2-cocycle born\'e $c_x$ unique dans la
 classe de cohomologie de $x$ tel que~:
\begin{enumerate}
 \item $c_x$ est le seul 2-cocycle born\'e repr\'esentant $x$ qui v\'erifie la
     relation :
 \begin{eqnarray}
  c_x(g^n, g^m)=0, \qquad \forall m, n\in \mathbb N, \forall g\in G.
 \end{eqnarray}

\noindent De plus le quasi-morphisme qui lui est  associ\'e
 par (9) est homog\`ene. On appellera $c_x$ le 2-cocycle homog\`ene
 repr\'esentant la classe $x$.

\item La classe de cohomologie  $x\in H_b^2(G, \mathbb R)$  est
    nulle si et seulement si le 2-cocycle born\'e homog\`ene $c_x$ est nul.
 \item Pour tout automorphisme $\alpha$ de $G$ on a $\alpha^*(c_x)=c_{
     \alpha_{b(x)}}$, o\`u $\alpha_b : H_b^2(G, \mathbb R) \rightarrow
     H_b^2(G, \mathbb R)$ est l'isom\'etrie induite par $\alpha$.
     En  cons\'equence,  la classe $x$ est fix\'ee par l'isom\'etrie $\alpha_b$ si
     et seulement si le 2-cocycle homog\`ene $c_x$ est invariant par
     $\alpha$ (i.e. $\alpha^*(c_x)=c_x$). En particulier, le 2-cocycle homog\`ene $c_x$ est invariant par
     conjugaison.
 \end{enumerate}
 \end{proposition}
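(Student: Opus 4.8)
The plan is to establish the three assertions in the order they are stated, the whole proposition resting on the existence-and-uniqueness claim (1); once a canonical homogeneous representative $c_x$ has been attached to each class, assertions (2) and (3) will follow almost formally by invoking that uniqueness.

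For existence I would start from the interpretation of $x\in H_b^2(G,\mathbb R)$ recorded just after Affirmation~3: the class $x$ is realized by a central extension $0\to\mathbb R\to\overline{G}\xrightarrow{p}G\to 1$ together with a set-theoretic section $s_x$, $s_x(1)=1$, whose associated quasi-morphism $\varphi$ (defined by (9)) is homogeneous and satisfies $\varphi\circ s_x=0$. I would then define $c_x$ by applying formula (8) to $s_x$; this is a bounded $2$-cocycle representing $x$ whose associated quasi-morphism is homogeneous by construction. To verify relation (12), I would use item (1) of Affirmation~4, namely $s_x(g^n)=(s_x(g))^n$ for all $n$. Since $\mathbb R\simeq\ker(p)$ is central, substituting this into (8) gives $c_x(g^n,g^m)=s_x(g)^n\,s_x(g)^m\,(s_x(g)^{n+m})^{-1}=1$ in $\overline{G}$, i.e. $c_x(g^n,g^m)=0$.

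The heart of the argument is the uniqueness, and this is where boundedness is essential. I would take two bounded $2$-cocycles $c$ and $c'$ representing $x$, both satisfying (12). By definition of $H_b^2(G,\mathbb R)$ as the quotient by \emph{bounded} coboundaries, $c-c'=df$ for some bounded $1$-cochain $f:G\to\mathbb R$, so $df(g_1,g_2)=f(g_2)-f(g_1g_2)+f(g_1)$ for the trivial action. Evaluating at $(g^n,g^m)$ and using (12) for both cocycles yields $f(g^{n+m})=f(g^n)+f(g^m)$ for all $n,m\geq 1$, hence $f(g^n)=n\,f(g)$ by induction. Boundedness of $f$ forces $n\,|f(g)|\leq\|f\|_\infty$ for every $n$, so $f(g)=0$ for all $g\in G$, whence $c=c'$. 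This proves (1) and shows that $x\mapsto c_x$ is a well-defined bijection onto the set of homogeneous bounded $2$-cocycles.

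Assertions (2) and (3) would then follow from this uniqueness. For (2): if $c_x=0$ then $x=[c_x]=0$; conversely, if $x=0$ the zero cocycle represents $x$ and trivially satisfies (12), so uniqueness gives $c_x=0$. For (3): given an automorphism $\alpha$ of $G$, the pullback $\alpha^*(c_x)(g_1,g_2)=c_x(\alpha(g_1),\alpha(g_2))$ is a bounded cocycle representing $\alpha_b(x)$; since $\alpha$ is a homomorphism, $\alpha(g^n)=\alpha(g)^n$, so $\alpha^*(c_x)(g^n,g^m)=c_x(\alpha(g)^n,\alpha(g)^m)=0$, and (12) holds for $\alpha^*(c_x)$. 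Uniqueness then forces $\alpha^*(c_x)=c_{\alpha_b(x)}$. Consequently $\alpha_b(x)=x$ if and only if $c_{\alpha_b(x)}=c_x$, i.e. $\alpha^*(c_x)=c_x$; invariance under conjugation is the special case $\alpha=i_{g_0}$, for which Proposition~3 gives $(i_{g_0})_b=\mathrm{id}$, hence $\alpha_b(x)=x$. The only genuinely delicate step is the uniqueness argument, where one must combine relation (12) with the boundedness of the connecting $1$-cochain to annihilate the additive function $n\mapsto f(g^n)$; everything else is bookkeeping on top of the affirmations already established.
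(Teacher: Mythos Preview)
Your proof is correct and follows essentially the same approach as the paper: existence is drawn from the preceding affirmations on the section $s_x$, and uniqueness (on which (2) and (3) rest) is obtained by showing that the bounded cochain $f$ connecting two homogeneous representatives must satisfy $f(g^n)=n\,f(g)$ and hence vanish. The only cosmetic difference is that the paper uses just the dyadic consequence $b(g^{2^n})=2^n b(g)$ from $db(g,g)=0$, whereas you use all pairs $(g^n,g^m)$; the argument is otherwise identical.
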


 \begin{proof}[D\'emonstartion]  1) Soient $c\in x$ un $2$-cocycle qui v\'erifie (12) et  $b : G\rightarrow \mathbb R$ une cocha\^\i ne born\'ee telle que $c-c_x = db$. Puisque $c$ et $c_x$ v\'erifient la condition  (12) il s'ensuit que pour tout $g\in G$ on a l'\'egalit\'e $db(g, g)=0$ qui implique que $b(g^{2^n})=2^nb(g), \forall n\in \mathbb N$. Donc, $b=0$ et par suite $c=c_x$.

 \noindent 2) Supposons que la classe $x$ est nulle. Donc, il existe une cocha\^\i ne born\'ee r\'eelle $b$ telle que $c_x=db$. Ainsi, comme dans la preuve de 1)  la condition (12) implique que $c_x=0$. La r\'eciproque est \'evidente.

 \noindent 3) Puisque le 2-cocycle  $c_x$ est homog\`ene il en r\'esulte que pour tout automorphisme $\alpha$ du groupe $G$ le $2$-cocycle $\alpha^*(c_x)\in \alpha_b(x)$ v\'erifie (12). Donc, d'apr\`es  1) $c_{\alpha_b(x)}=\alpha^*(c_x)$.
 \end{proof}

\subsection{Construction de la classe de cohomologie born\'ee de degr\'e deux $\mathbf{g}_{_{2}}$}

Soit $G$ un groupe discret que l'on fait agir trivialement sur le second espace
de l'homologie $\ell^1$-r\'eduite $\overline{H}_2^{\ell_1}(G, \mathbb R)$. Soit
$\mathbf{m}_{_{2}} : G^2\rightarrow C_2^{\ell_1}(G, \mathbb R)$ la cocha\^\i ne
d\'efinie par l'expression :
\begin{eqnarray}
\mathbf{m}_{_{2}}(g, h) = (g, h) - \mathbf{m}(g) + \mathbf{m}(gh) - \mathbf{m}(h)
= (g, h) - \mathbf{m}\circ\partial_2(g, h)
\end{eqnarray}
o\`u $\mathbf{m} : C_1^{\ell_1}(G, \mathbb R)\rightarrow C_2^{\ell_1}(G,
\mathbb R)$ d\'esigne l'op\'erateur born\'e d\'efini ci-dessus par la formule
$(5)$ (cf.  \cite{Mi}). La cocha\^\i ne $\mathbf{m}_{_{2}}$ est born\'ee parce
que d'apr\`es l'expression $(5)$  pour tout $g\in G$ la norme
$\|\mathbf{m}(g)\|_1=1$ implique que pour tous $ g$ et  $h\in G$ la norme
$\|\mathbf{m}_{_{2}}(g, h)\|_1\leq 4$.

\begin{affirmation} Pour tous  $g$ et $h\in G$  on a  les propri\'et\'es suivantes :
\begin{enumerate}
\item Pour toute cocha\^\i ne born\'ee $b : G\rightarrow \mathbb R$, on a : $<db, \mathbf{m}_{_{2}}(g, h)>=0$.
\item Si $c : G^2\rightarrow \mathbb R$ est un $2$-cocycle born\'e et si
    $c_x: G^2\rightarrow \mathbb R$ d\'esigne l'unique $2$-cocycle born\'e
    homog\`ene tel que  $x=[c]=[c_x]\in H_b^2(G, \mathbb R)$ alors,
$$<c, \mathbf{m}_{_{2}}(g, h)>=<c_x, \mathbf{m}_{_{2}}(g, h)>=c_x(g, h).$$
\end{enumerate}
\end{affirmation}

\begin{proof}[D\'emonstration] Premi\`erement, remarquons que d'apr\`es l'expression $(5)$ qui d\'efinit la
co-cha\^\i ne $\mathbf m : G\rightarrow
C_2^{\ell_1}(G, \mathbb R)$ pour toute cocha\^\i ne born\'ee $b$  on
 a, $<db, \mathbf{m}(g)>=b(g)$,  ceci entra\^\i ne
$<db, \mathbf{m}_{_{2}}>=0$. Ainsi, puisqu'il existe une cocha\^\i ne born\'ee
$b' : G\rightarrow \mathbb R$ telle que  $c-c_x=db'$ on voit donc que $<c,
\mathbf{m}_{_{2}}>=<c_x, \mathbf{m}_{_{2}}>$.

Enfin, en utilisant le fait que le $2$-cocycle $c_x$ v\'erifie la condition
d'homog\'en\'eit\'e  on en d\'eduit que $<c_x, \mathbf{m}_{_{2}}(g, h)>=c_x(g,
h)$.
\end{proof}

\begin{affirmation} La cocha\^\i ne born\'ee $\mathbf{m}_{_{2}} : G^2\rightarrow C_2^{\ell_1}(G, \mathbb R)$
induit un $2$-cocycle born\'e homog\`ene $\overline{\mathbf{m}}_{_{2}} : G^2\rightarrow \overline{H}_2^{\ell_1}(G,
\mathbb R)$ (i.e. $\overline{\mathbf{m}}_{_{2}}(g^m, g^n)=0, \forall g\in G, \forall m, n\in\mathbb Z$).
\end{affirmation}

\begin{proof}[D\'emonstration] Observons d'abord que puisque pour tout $g\in G$ on a
$\partial_2(\mathbf{m}(g))=g$ il en r\'esulte que la cocha\^\i ne born\'ee
$\mathbf{m}_{_{2}} : G^2\rightarrow C_2^{\ell_1}(G, \mathbb R)$ prend ses
valeurs dans l'espace des $\ell_1$-cycles $Z_2^{\ell_1}(G, \mathbb R)$. Ainsi,
en composant la $2$-cocha\^\i ne $\mathbf{m}_{_{2}}$ avec la surjection
canonique
 $Z_2^{\ell_1}(G, \mathbb R)\rightarrow \overline{H}_2^{\ell_1}(G, \mathbb R)$, on obtient une $2$-cocha\^\i ne
 born\'ee $\overline{\mathbf{m}}_{_{2}} : G^2\rightarrow \overline{H}_2^{\ell_1}(G, \mathbb R)$.

Pour d\'emontrer que le cobord de la cocha\^\i ne $\overline{\mathbf{m}}_{_{2}}
: G^2\rightarrow \overline{H}_2^{\ell_1}(G, \mathbb R)$ est nul nous allons
d\'emontrer que le cobord de la cocha\^\i ne $\mathbf{m}_{_{2}} :
G^2\rightarrow Z_2^{\ell_1}(G, \mathbb R)$ est une $3$-cocha\^\i ne \`a valeurs
dans l'adh\'erence $\overline{B_2^{\ell_1}(G, \mathbb R)}\subseteq
Z_2^{\ell_1}(G, \mathbb R)$.

En effet, si on  d\'esigne par $D_*$ la diff\'erentielle ordinaire du complexe
des cocha\^\i nes born\'ees non homog\`enes  $C_b^*(G,  Z_2^{\ell_1}(G, \mathbb
R))$ o\`u le groupe $G$ agit trivialement sur l'espace $Z_2^{\ell_1}(G, \mathbb
R)$, alors,  \`a partir de la deuxi\`eme assertion de l'affirmation 5 et de la
lin\'earit\'e du crochet de dualit\'e $<, > : H_b^2(G, \mathbb R)\times
\overline{H}_2^{\ell_1}(G, \mathbb R) \rightarrow \mathbb R$ on voit que pour
toute classe de cohomologie  born\'ee $x=[c]=[c_x]\in H_b^2(G, \mathbb R)$ et
pour tous $g$, $h$ et  $k\in G$ on a,
\begin{eqnarray}
<c, D_*(\mathbf{m}_{_{2}})(g, h, k)>&=&<c, \mathbf{m}_{_{2}}(h, k)> - <c, \mathbf{m}_{_{2}}(gh, k)> \nonumber \\
&& + <c, \mathbf{m}_{_{2}}(g, hk)> - <c, \mathbf{m}_{_{2}}(g, h)>\nonumber \\
&=& c_x(h, k) - c_x(gh, k) + c_x(g, hk) - c_x(g, h)\nonumber \\
&=& dc_x(g, h, k)=0.\nonumber \end{eqnarray}

Ainsi, de la derni\`ere expression on d\'eduit  que le cobord
$D_*(\mathbf{m}_{_{2}}) : G^3\rightarrow Z_2^{\ell_1}(G, \mathbb R)$ prend ses
valeurs dans l'adh\'erence $\overline{B_2^{\ell_1}(G, \mathbb R)}\subseteq
Z_2^{\ell_1}(G, \mathbb R)$. Parce que si on suppose qu'il existe des
\'el\'ements  $g_0, h_0$ et $k_0\in G$ tels que,
$D(\mathbf{m}_{_{2}})(g_0,h_0,k_0)\not\in\overline{B_2^{\ell_1}(G, \mathbb
R)}\subset C_2^{\ell_1}(G, \mathbb R)$, le th\'eor\`eme de s\'eparation de
Hahn-Banach (cf. \cite{Hb}, \cite{Yo}) nous permet de trouver une forme
lin\'eaire continue non nulle $c_0 : C_2^{\ell_1}(G, \mathbb R)\rightarrow
\mathbb R$ qui s'annule sur le sous-espace ferm\'e $\overline{B_2^{\ell_1}(G,
\mathbb R)}$ et telle que $<c_0, D_*(\mathbf{m}_{_{2}})(g_0,h_0,k_0)>=1$.

Maintenant, puisque  la forme lin\'eaire continue $c_0 : C_2^{\ell_1}(G,
\mathbb R)\rightarrow \mathbb R$ est nulle sur l'espace des $2$-bords born\'es
$B_2^{\ell_1}(G, \mathbb R)\subseteq \overline{B_2^{\ell_1}(G, \mathbb R)}$ on
en d\'eduit que  $c_0 : C_2^{\ell_1}(G, \mathbb R)\rightarrow \mathbb R$
d\'efinit un $2$-cocycle born\'e $c_0\in Z_b^2(G, \mathbb R)$ et donc,   pour
tout triplet   $(g, h, k)\in G^3$ on aura l'\'egalit\'e  $$<c_0,
D_*(\mathbf{m}_{_{2}})(g,h,k)>=  0$$ qui contredit le fait que $<c_0,
D_*(\mathbf{m}_{_{2}})(g_0, h_0, k_0)>=1$.

Par cons\'equent, pour tous $g$, $h$ et $k\in G$ le cobord
$D_*(\mathbf{m}_{_{2}})(g,h,k)\in \overline{B_2^{\ell_1}(G, \mathbb R)}$.

Finalement, en passant en homologie $\ell_1$-r\'eduite on conclut que
$\overline{\mathbf{m}}_{_{2}} : G^2\rightarrow \overline{H}_2^{\ell_1}(G,
\mathbb R)$ est un $2$-cocycle born\'e. De plus, il est homog\`ene parce que
pour tous $g\in G$ et $n, m\in \mathbb Z$ la relation  $<c_x,
\mathbf{m}_{_{2}}(g^n, g^m)>=c_x(g^n, g^m)=0$ implique
$\overline{\mathbf{m}}_{_{2}}(g^n, g^m)=0$.\end{proof}

Nous avons maintenant tous les \'el\'ements n\'ecessaires  pour \'enoncer et
d\'emontrer le th\'eor\`eme principal A.

\newtheorem*{PrA}{Th\'eor\`eme principal A}\begin{PrA}
La classe de cohomologie born\'ee $\mathbf{g}_{_{2}}:=
[\overline{\mathbf{m}}_{_{2}}]\in H_b^2(G, \overline{H}_2^{\ell_1}(G, \mathbb
R))$ v\'erifie les deux propri\'et\'es suivantes :

\begin{enumerate}
\item $\mathbf{g}_{_{2}}$ est nulle si et seulement si le second groupe de
    cohomologie born\'ee $H_b^2(G, \mathbb R)=0$.
\item $\mathbf{g}_{_{2}}$ est la seule classe de cohomologie born\'ee
    \'el\'ement de l'espace $H_b^2(G, \overline{H}_2^{\ell_1}(G, \mathbb
    R))$ qui v\'erifie la relation,
$$x\cup \mathbf{g}_{_{2}} = x, \qquad  \forall x\in H_b^2(G, \mathbb R) $$
o\`u le cup-produit $\cup$ est d\'efini par l'entrelacement naturel
(dualit\'e) entre les espaces de Banach $H_b^2(G, \mathbb R)$ et
$\overline{H}_2^{\ell_1}(G, \mathbb R)$.
\end{enumerate}
\end{PrA}

\begin{proof}[D\'emonstration] 1) La premi\`ere proposition du th\'eor\`eme est une cons\'equence de la
formule $<c_x, \mathbf{m}_{_{2}}(g, h)>=c_x(g, h)$ (cf. aff. 5) et du
fait que la classe de cohomologie born\'ee r\'eelle $[c_x]$ est nulle  si et
seulement si le $2$-cocycle born\'e homog\`ene $c_x=0$ (cf. pr. 5).

\noindent 2) Supposons qu'il existe une cocha\^\i ne born\'ee ${\mu} :
G^2\rightarrow{Z}_2^{\ell_1}(G, \mathbb R)$ qui induit une classe de
cohomologie $[\overline{\mu}]\in H_b^2(G, \overline{H}_2^{\ell_1}(G, \mathbb
R))$ telle que, $x\cup [\overline{\mu}] = x, \forall x\in H_b^2(G, \mathbb R).$

Observons que sous cette hypoth\`ese   pour toute classe de cohomologie  $x\in
H_b^2(G, \mathbb R)$ on peut \'ecrire $<c_x, \mu - \mathbf{m}_{_{2}}>=0$.
Ainsi, en appliquant le th\'eor\`eme de s\'eparation de Hahn-Banach comme dans
la preuve de l'affirmation 6, on d\'eduit que la cocha\^\i ne born\'ee $\mu -
\mathbf{m}_{_{2}} : G^2 \rightarrow Z_2^{\ell_1}(G, \mathbb R)$ prend ses
valeurs dans l'adh\'erence $\overline{B_2^{\ell_1}(G, \mathbb R)}$. Autrement
dit on a, $\overline{\mu}=\overline{\mathbf{m}}_{_{2}} : G^2\rightarrow
\overline{H}_2^{\ell_1}(G, \mathbb R).$  \end{proof}

Dans le papier \cite{Bou4}, nous reviendrons sur la construction de  la classe de cohomologie $\mathbf{g}_2$ pour montrer qu'ellle est universelle au sens de Yoneda et nous en expliciterons l'expression sur quelques groupes discrets particuliers.

Dans la section 5, si $\theta : \Pi\rightarrow \textrm{Out}(G)$ d\'esigne la repr\'esentation ext\'erieure associ\'ee \`a une extension de groupes discrets $1\rightarrow G \stackrel{i}{\rightarrow} \Gamma
\stackrel{\sigma}{\rightarrow}\Pi \rightarrow 1$ (cf. voir 4.1) nous allons d\'emontrer que la classe de cohomologie $\mathbf{g}_2$ est invariante par rapport \`a l'action du groupe $\Pi$  sur l'espace $H_b^2(G, \overline{H}_2^{\ell_1}(G, \mathbb R))$ qui  est induite par la repr\'esentation $\theta$ (cf. cor. 6).

\section{Construction de la  classe de cohomologie born\'ee de degr\'e trois $[\theta]$}

Dans cette section, nous nous proposons d'associer \`a toute repr\'esentation
ext\'erieure $\theta : \Pi\rightarrow Out(G)$ une classe de cohomologie
born\'ee de degr\'e trois du groupe $\Pi$  \`a valeurs dans l'espace des
classes d'homologie $\ell_1$-r\'eduite $\overline{H}_2^{\ell_1}(G, \mathbb R)$
que l'on suppose muni de la structure de $\Pi$-module de Banach induite par
l'homomorphisme   $\theta$. Cette classe de cohomologie sera not\'ee
$[\theta]\in H_b^3(\Pi, \overline{H}_2^{\ell_1}(G, \mathbb R))$.

Pour construire la classe de cohomologie born\'ee $[\theta]\in
H_b^3(\Pi,\overline{H}_2^{\ell_1}(G, \mathbb R))$ nous allons nous {inspirer}
des techniques introduites en th\'eorie de l'obstruction qui associe \`a la
donn\'ee d'une repr\'esentation ext\'erieure $\theta : \Pi\rightarrow Out(G)$
une classe de cohomologie ordinaire de degr\'e trois \'el\'ement du groupe
$H^3(\Pi, Z(G))$ (cf. \cite{Ma} et \cite{Bro}).

\subsection{Classes de cohomologie  d'obstruction}  Dans ce paragraphe, nous donnerons l'essentiel des id\'ees de la th\'eorie de l'obstruction qui permettent d'associer \`a un homomorphisme $\theta : \Pi\rightarrow \textrm{Out}(G)$ une classe de cohomologie ordinaire de degr\'e trois \'el\'ement du groupe
$H^3(\Pi, Z(G))$ (cf. \cite{Ma} et \cite{Bro}).

\subsubsection{L'approche dir\`ecte} Soient $1\rightarrow G \stackrel{i}{\rightarrow} \Gamma
\stackrel{\sigma}{\rightarrow}\Pi \rightarrow 1 $ une extension de groupes
discrets et $ s : \Pi \rightarrow G$ une section ensembliste de la projection
$\sigma$ telle que $\sigma\circ s=id_{\Pi}$ et $s(1)=1$. Pour tous
$\alpha\in\Pi$ et $g\in G$ on pose $\Psi_s(\alpha)(g)=
s(\alpha)i(g)s(\alpha)^{-1}$.

 En g\'en\'eral, l'application $\Psi_s : \Pi\rightarrow \mathrm{Aut}(G)$ n'est pas
un homomorphisme. En effet, si on note $f_s : \Pi\times \Pi \rightarrow G$ le
d\'efaut de la section $s$ \`a \^etre un morphisme de groupes,
\begin{eqnarray*}
s(\alpha)s(\beta)= i(f_s(\alpha, \beta))s(\alpha\beta), \quad \forall \alpha, \beta\in \Pi
\end{eqnarray*}
on v\'erifie que pour tout $g\in G$ :
$$\Psi_s(\alpha)\circ\Psi_s(\beta)(g)= i_{f_s(\alpha, \beta)}\circ\Psi_s(\alpha\beta)(g) $$
o\`u $i_x$  d\'esigne l'automorphisme int\'erieur de $G$ d\'efini par $i_x(g) =xgx^{-1}, \forall x, g\in G$.

Notons aussi que si  $s_1$ est une deuxi\`eme section ensembliste de $\sigma$
alors il existe une application $ h : \Pi \rightarrow G$ telle que
$s_1(\alpha)=h(\alpha)s(\alpha), \forall \alpha\in \Pi$. Ainsi, par
d\'efinition de  l'application $\Psi_{s_1}$, on aura la relation
$\Psi_{s_1}(\alpha)(g)=h(\alpha)\Psi_s(\alpha)(g)h(\alpha)^{-1}$ qui montre que
$\Psi_s$ induit un homomorphisme $\theta : \Pi \rightarrow Out(G)$ ne
d\'ependant que de la suite exacte donn\'ee.

La cocha\^\i ne non ab\'elienne $f_s$ v\'erifie la condition de
2-cocycle non ab\'elien sur le groupe $\Pi$ \`a valeurs dans le groupe
$G$ donn\'ee par :
 \begin{eqnarray}
 \Psi_s(\alpha)(f_s(\beta, \gamma))f_s(\alpha, \beta\gamma)=f_s(\alpha,
 \beta)f_s(\alpha\beta, \gamma)
 \end{eqnarray}

En effet, c'est gr\^ace \`a l'expression (14) qu'on pourra identifier le groupe
$\Gamma$ avec le produit   cart\'esien tordu $\Pi \times_{f_s}G$ munit de la
loi de composition interne suivante (cf.  \cite{Bro} et \cite{Ma})~:
 \begin{eqnarray*}
 (\alpha, g)\cdot(\alpha_1, g_1)=(\alpha\alpha_1,g\Psi_s(
   \alpha)(g_1)f(\alpha,\alpha_1))
   \end{eqnarray*}

On v\'erifie aussi que le couple $(1,1)$ est \'el\'ement neutre dans
$\Pi\times_{f_s}G$ et que $i(g)=(1,g)$ et $\sigma(\alpha, g)=\alpha$ sont
respectivement l'injection canonique de $G$ dans $\Pi\times_{f_s}G$ et la
projection canonique de $\Pi\times_{f_s}G$ sur $\Pi$.

\subsubsection{L'approche inverse} Consid\'erons  une repr\'esentation $\theta : \Pi \rightarrow
\mathrm{Out}(G)$ et un rel\`evement ensembliste, $\Psi : \Pi \rightarrow
\mathrm{Aut}(G)$,  de $\theta$. Puisque pour tous les \'el\'ements $\alpha$ et $\beta$ de $\Pi$ les deux
automorphismes $\Psi(\alpha)\circ\Psi(\beta)$ et $\Psi(\alpha\beta)$
repr\'esentent le m\^eme automorphisme ext\'erieur
$\theta(\alpha)\circ\theta(\beta)=\theta(\alpha\beta)$ il existe donc  un
\'element $f(\alpha, \beta)\in G$ tel que~:
\begin{eqnarray*}
i_{f(\alpha, \beta)}\circ\Psi(\alpha\beta)=\Psi(
\alpha)\circ\Psi(\beta)
\end{eqnarray*}

Dans la suite nous appelerons  le couple $(\Psi, f)$  {\it noyau abstrait} de
l'homomorphisme $\theta : \Pi\rightarrow Out(G)$ (cf. \cite{Bro} et \cite{Ma}).

 En g\'en\'eral, la cocha\^\i ne $f : \Pi\times\Pi \rightarrow G$ ne v\'erifie pas
la relation (14)  des 2-cocycles non ab\'eliens. Cependant, si pour tous les
\'el\'ements $\alpha$, $\beta$ et $\gamma\in \Pi$ on pose :
\begin{eqnarray}
K_{_{\Psi, f}}(\alpha, \beta, \gamma)
=\Psi(\alpha)(f(\beta, \gamma))f(\alpha,\beta\gamma)f(\alpha\beta,
 \gamma)^{-1}f(\alpha, \beta)^{-1}
 \end{eqnarray}
on v\'erifie  que l'application $K_{_{\Psi,f}} : \Pi^3\rightarrow G$ prend ses
valeurs dans le centre $Z(G)$ du groupe $G$. En outre, en munissant le groupe ab\'elien  $Z(G)$ de
la structure de $\Pi$-module induite par l'homomorphisme $\theta :
\Pi\rightarrow Out(G)$, alors gr\^ace \`a l'associativit\'e du groupe des automorphismes $Aut(G)$
on d\'emontre que la $3$-cocha\^\i ne $K_{_{\Psi,f}} : \Pi^3\rightarrow Z(G)$ est un cocycle et
que la classe de cohomologie $[K_{_{\Psi, f}}]\in H^3(\Pi, Z(G))$ ne d\'epend
que de la repr\'esentation ext\'erieure
 $\theta : \Pi\rightarrow Out(G)$. De plus, on d\'emontre que la classe    $[K_{_{\Psi, f}}]\in H^3(\Pi, Z(G))$ s'annule  si et seulement si l'homomorphisme
 $\theta : \Pi\rightarrow Out(G)$ est associ\'e \`a une
 extension de groupes discrets $1 \longrightarrow G\stackrel{i}{\longrightarrow}
 \Gamma\stackrel{\sigma}{\longrightarrow} \Pi\longrightarrow 1$ (cf. \cite{Bro} et \cite{Ma}).

Dans le reste de cette section, nous allons munir l'espace d'homologie $\ell^1$-r\'eduite $\overline{H}_2^{\ell_1}(G, \mathbb R)$ de la structure de $\Pi$-module induite par l'homomorphisme $\theta : \Pi\rightarrow \textrm{Out}(G)$ et ainsi en remarquant  que si on r\'epartit les quatre facteurs le l'expression (15) comme suit (voir l'expression (23) du paragraphe 4.4)
$$\overline{\theta}_{_{\Phi, f}}(\alpha, \beta, \gamma):= \overline{\mathbf{m}}_2(\Psi(\alpha)(f(\beta, \gamma)), f(\alpha,\beta\gamma)) -\overline{\mathbf{m}}_2(f(\alpha, \beta),  f(\alpha\beta,
 \gamma))      $$
 nous allons v\'erifier que la $3$-cocha\^\i ne born\'ee $\overline{\theta}_{_{\Phi, f}} : \Pi^3 \longrightarrow \overline{H}_2^{\ell_1}(G, \mathbb R)$ est un cocycle (cf. pr. 9) repr\'esantant une  classe de cohomologie not\'ee, $[\theta]\in H_b^3(\Pi,  \overline{H}_2^{\ell_1}(G, \mathbb R))$,  qui ne d\'epend pas du noyau abstrait $(\Psi, f)$ associ\'e \`a l'homomorphisme  $\theta : \Pi\rightarrow \textrm{Out}(G)$ (cf. pr. 10).

L'utilit\'e de la classe de cohomologie  $[\theta]$ pour cet article appara\^\i t dans  la section 5.

Plus pr\'ecis\'ement,  en se donnant une extension de groupes discrets $1 \rightarrow G\stackrel{i}{\rightarrow}\Gamma\stackrel{\sigma}{\rightarrow} \Pi\rightarrow 1$ munie de sa repr\'esentation ext\'erieure $\theta : \Pi\rightarrow \textrm{Out}(G)$, nous allons v\'erifier que la diff\'erentielle $d_3^{0, 2} : E_3^{0, 2}\rightarrow E_3^{3, 0}$, de la suite spectrale de Hochschilde-Serre en cohomologie born\'ee \`a coefficients dans le $\Pi$-module $ \overline{H}_2^{\ell_1}(G, \mathbb R)$,     envoie la classe de cohomologie born\'ee $\mathbf{g}_2$ sur la classe de cohomologie born\'ee $[\theta]$ (cf.  th. B). Ensuite, pour tout entier $n\geq 0$  nous allons d\'emontrer que la diff\'erentielle $d_3^{n, 2} : E_3^{n, 2}\rightarrow E_3^{n+3, 0}$, de la suite spectrale de Hochschilde-Serre en cohomologie born\'ee r\'eelle, est donn\'ee par le cup produit (cf. th. C),
$$d_3^{n, 2}(x)=(-1)^nx\cup [\theta]$$

\subsection{Quasi-actions}
Rappelons que d'apr\`es les discutions du paragraphe 3.4, \'etant donn\'ee une
classe de cohomologie born\'ee  $x\in H_b^2(G, \mathbb R)$ nous lui avons
associ\'e une extension centrale $0 \longrightarrow \mathbb
R\stackrel{j}{\longrightarrow}\overline{G}\stackrel{p}{\longrightarrow} G
\longrightarrow 1$ munie d'un quasi-morphisme homog\`ene $\varphi :
\overline{G}\rightarrow \mathbb R$ et d'une section ensembliste $s_x :
G\rightarrow \overline{G}$ dont le d\'efaut \`a \^etre un homomorphisme est un
$2$-cocycle born\'e homog\`ene $c_x : G^2\rightarrow \mathbb R$ tel que
$p^*(c_x)=-d\varphi$ et  $[c_x]=x\in H_b^2(G, \mathbb R)$.

Le lemme suivant ainsi que la proposition et son corollaire sont extraits de
\cite{Bou} et \cite{Bou1}.

\begin{lemma}
Soit $(\Psi, f)$ un noyau abstrait de l'homomorphisme $\theta : \Pi\rightarrow
\mathrm{Out}(G)$. Pour tous les \'el\'ements $\alpha\in\Pi$ et $\overline{g}\in
\overline{G}$ on pose :
\begin{eqnarray}
\overline{\Psi}(\alpha)(\overline{g}) =
s_x(\Psi(\alpha)(p(\overline{g})))\varphi(\overline{g})
\end{eqnarray}

Alors, pour chaque $\alpha\in \Pi$ l'application $\overline{\Psi}(\alpha) :
\overline{G}\rightarrow \overline{G}$ est une bijection \'egale \`a
l'identit\'e sur la droite centrale $\mathbb R=Ker(p)\subset\overline{G}$ et
qui v\'erifie les propri\'et\'es suivantes :
\begin{enumerate}
\item $\forall \overline{g}\in \overline{G}, \
    p(\overline{\Psi}(\alpha)(\overline{g}))=
    \Psi(\alpha)(p(\overline{g}))$ ;
\item $\forall \overline{g}\in \overline{G},
    \varphi(\overline{\Psi}(\alpha)(\overline{g})) = \varphi(\overline{g})$
    ;
\item $\forall \alpha\in \Pi, \forall \bar{g}\in\overline{G}$ ,
    $\overline{\Psi}(\alpha)\circ i_{_{\bar g}}=
    i_{_{\overline{\Psi}(\alpha)(\bar g)}}\circ \overline{\Psi}(\alpha)$ ;
\item $\forall \alpha, \beta\in \Pi, \ {\overline
    \Psi}(\alpha)\circ{\overline \Psi}(\beta) = i_{F_x(\alpha,
    \beta)}\circ{\overline \Psi}(\alpha\beta)$ o\`u $F_x(\alpha, \beta) =
    s_x\circ f(\alpha, \beta)$.
\item $\forall \alpha\in \Pi, \overline{g}\in \overline{G}, \overline{z}\in
    Z(\overline{G}),
    \overline{\Psi}(\alpha)(\overline{g}\ \overline{z})=\overline{\Psi}(\alpha)(\overline{g})
    \overline{\Psi}(\alpha)(\overline{z})$.
\end{enumerate}
\end{lemma}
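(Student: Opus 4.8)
The plan is to reduce the five properties to a single set-theoretic coordinate system on $\overline{G}$. By the relation $\overline{g} = s_x(p(\overline{g}))\,\varphi(\overline{g})$ (formula (9) for the homogeneous section $s_x$, for which $\varphi\circ s_x = 0$), every $\overline{g}\in\overline{G}$ is written uniquely in this form, so $\overline{g}\mapsto(p(\overline{g}),\varphi(\overline{g}))$ is a bijection $\overline{G}\to G\times\mathbb R$ (of sets), with inverse $(g,t)\mapsto s_x(g)\,t$. In these coordinates the defining formula (16) reads $\overline{\Psi}(\alpha):(g,t)\mapsto(\Psi(\alpha)(g),t)$. Indeed $p(\overline{\Psi}(\alpha)(\overline{g}))=\Psi(\alpha)(p(\overline{g}))$ because $\varphi(\overline{g})\in\mathrm{Ker}(p)$ and $p\circ s_x=\mathrm{id}$, which is property (1); and $\varphi(\overline{\Psi}(\alpha)(\overline{g}))=\varphi(s_x(\Psi(\alpha)(p(\overline{g}))))+\varphi(\overline{g})=\varphi(\overline{g})$ by Affirmation 2(1) together with $\varphi\circ s_x=0$, which is property (2). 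Since $\Psi(\alpha)\in\mathrm{Aut}(G)$ is bijective, the coordinate description shows at once that $\overline{\Psi}(\alpha)$ is a bijection of $\overline{G}$; and on $\mathbb R=\mathrm{Ker}(p)$, where the first coordinate is fixed to $1$ and $\Psi(\alpha)(1)=1$, it is the identity.

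Next I would establish the computational engine underlying properties (3)--(5): conjugation in $\overline{G}$ acts on coordinates by $i_{\overline{g}}:(b,v)\mapsto(aba^{-1},v)$ with $a=p(\overline{g})$. Writing $\overline{g}=s_x(a)\,u$ and $\overline{h}=s_x(b)\,v$, the centrality of $\mathbb R$ cancels the scalar factors ($uvu^{-1}=v$), and Affirmation 4(2), i.e. that $s_x$ commutes with conjugation, gives $s_x(a)s_x(b)s_x(a)^{-1}=s_x(aba^{-1})$; reading off coordinates yields the claim (the $\varphi$-coordinate being $v$ since $\varphi\circ s_x=0$). With this engine property (3) is immediate: both $\overline{\Psi}(\alpha)(i_{\overline{g}}(\overline{h}))$ and $i_{\overline{\Psi}(\alpha)(\overline{g})}(\overline{\Psi}(\alpha)(\overline{h}))$ have coordinates $(\Psi(\alpha)(aba^{-1}),v)$, using that $\Psi(\alpha)$ is an automorphism.

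For property (4) I would evaluate both sides on a general $\overline{g}$ of coordinates $(g,t)$. The left side $\overline{\Psi}(\alpha)\circ\overline{\Psi}(\beta)$ gives $(\Psi(\alpha)\Psi(\beta)(g),t)$. On the right, $F_x(\alpha,\beta)=s_x(f(\alpha,\beta))$ has coordinates $(f(\alpha,\beta),0)$, so by the conjugation engine $i_{F_x(\alpha,\beta)}(\overline{\Psi}(\alpha\beta)(\overline{g}))$ has coordinates $(i_{f(\alpha,\beta)}(\Psi(\alpha\beta)(g)),t)$; the abstract-kernel relation $i_{f(\alpha,\beta)}\circ\Psi(\alpha\beta)=\Psi(\alpha)\circ\Psi(\beta)$ makes the first coordinates agree. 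This is the conceptual heart of the lemma and the step I expect to be most delicate, since it is precisely where the failure of $\overline{\Psi}$ to be multiplicative in $\alpha$ is absorbed by the cocycle $f$. Finally, property (5) needs one extra input: for $\overline{z}\in Z(\overline{G})$ one has $p(\overline{z})\in Z(G)$ ($p$ being a surjective homomorphism), whence $\overline{\Psi}(\alpha)(\overline{z})=s_x(\Psi(\alpha)(p(\overline{z})))\,\varphi(\overline{z})$ again lies in $Z(\overline{G})$ by Affirmation 4(3), and—crucially—the restriction of $s_x$ to $Z(G)$ is a homomorphism, so $s_x(g')s_x(z')=s_x(g'z')$ whenever $z'\in Z(G)$. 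Expanding $\overline{\Psi}(\alpha)(\overline{g})\,\overline{\Psi}(\alpha)(\overline{z})$ with the centrality of the scalar factors and this homomorphism property produces coordinates $(\Psi(\alpha)(p(\overline{g})p(\overline{z})),\varphi(\overline{g})+\varphi(\overline{z}))$, which are exactly the coordinates of $\overline{\Psi}(\alpha)(\overline{g}\,\overline{z})$ (using $\varphi(\overline{g}\,\overline{z})=\varphi(\overline{g})+\varphi(\overline{z})$ from Affirmation 4). Thus all five properties follow from the single coordinate picture once the conjugation engine and the centre-homomorphism property of $s_x$ are in hand.
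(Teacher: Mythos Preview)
Your proof is correct and follows essentially the same approach as the paper's own proof: both reduce each property to the key facts $\varphi\circ s_x=0$, the conjugation-equivariance $s_x\circ i_g=i_{s_x(g)}\circ s_x$ (Affirmation~4(2)), the abstract-kernel relation $\Psi(\alpha)\circ\Psi(\beta)=i_{f(\alpha,\beta)}\circ\Psi(\alpha\beta)$, and the centre properties of $s_x$ and $\varphi$ from Affirmation~4. Your explicit coordinate picture $\overline{G}\leftrightarrow G\times\mathbb{R}$ is a clean way to package these facts, but it is not a genuinely different route---the paper simply states the same ingredients more tersely, item by item.
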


\begin{proof}[D\'emonstration] La relation 1) est \'evidente. La relation 2)
r\'esulte du fait que $\varphi\circ s_x=0$. La relation 3) est une
cons\'equence imm\'ediate de la relation : $s_x\circ i_g =
i_{_{s_{_{x}}(g)}}\circ s_x$ tandis que  la relation 4) se d\'eduit de la
formule de composition, $\Psi(\alpha)\circ\Psi(\beta)=i_{_{f(\alpha,
\beta)}}\circ\Psi(\alpha\beta)$.

Enfin, la relation 5) se d\'emontre \`a l'aide des propri\'et\'es de $s_x$ et de $\varphi$
\'enonc\'ees dans l'affirmation 4.
\end{proof}

\begin{proposition}
Pour tous $\overline{g}$ et $\overline{h}$ \'el\'ements du groupe $\overline G$
la bijection ${\overline \Psi}(\alpha) : \overline{G}\rightarrow \overline{G}$
v\'erifie la relation suivante~:
\begin{eqnarray}
{\overline \Psi}(\alpha)(\overline{g})
{\overline \Psi}(\alpha)(\overline{h})
 ={\overline \Psi}(\alpha)(\overline{g}\overline{h})
(\Psi(\alpha))^*(c_x)(p(\overline{g}), p(\overline{h}))
 c_x(p(\overline{g}), p(\overline{h}))^{-1}
 \end{eqnarray}
qui mesure le d\'efaut de $\overline{\Psi}(\alpha)$ \`a \^etre un automorphisme
sur l'extension centrale $\overline G$.

En cons\'equence, l'application ${\overline \Psi}(\alpha) :
\overline{G}\rightarrow \overline{G}$ devient un automorphisme  si et seulement
si la classe de cohomologie born\'ee $x\in H_b^2(G, \mathbb R)$ est invariante
par l'action de l'automorphisme $\Psi(\alpha)\in \mathrm{Aut}(G)$.
\end{proposition}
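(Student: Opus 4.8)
The plan is to verify the identity (17) by a direct computation from the definition (16), using throughout that $j(\mathbb{R}) = \mathrm{Ker}(p)$ is central in $\overline{G}$, so that every real factor (silently identified with its image under $j$) commutes past all of $\overline{G}$. Abbreviate $g = p(\overline{g})$, $h = p(\overline{h})$ and $\psi = \Psi(\alpha)$. First I would expand the left-hand side:
$$\overline{\Psi}(\alpha)(\overline{g})\,\overline{\Psi}(\alpha)(\overline{h}) = s_x(\psi(g))\,\varphi(\overline{g})\,s_x(\psi(h))\,\varphi(\overline{h}) = s_x(\psi(g))\,s_x(\psi(h))\,\varphi(\overline{g})\,\varphi(\overline{h}).$$
Since $\psi$ is an automorphism of $G$, the defect formula (8) for the section $s_x$ gives $s_x(\psi(g))\,s_x(\psi(h)) = c_x(\psi(g),\psi(h))\,s_x(\psi(gh))$, and by definition $c_x(\psi(g),\psi(h)) = (\Psi(\alpha))^*(c_x)(g,h)$; this produces the factor $(\Psi(\alpha))^*(c_x)$ appearing in (17).

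Next I would treat the right-hand side. As $p$ is a homomorphism, $p(\overline{g}\,\overline{h}) = gh$, whence $\overline{\Psi}(\alpha)(\overline{g}\,\overline{h}) = s_x(\psi(gh))\,\varphi(\overline{g}\,\overline{h})$. The decisive input is the relation $p^*(c_x) = -d\varphi$, which unwinds to $\varphi(\overline{g}\,\overline{h}) = \varphi(\overline{g}) + \varphi(\overline{h}) + c_x(g,h)$. Substituting and again moving the central real factors to the right yields $\overline{\Psi}(\alpha)(\overline{g}\,\overline{h}) = s_x(\psi(gh))\,c_x(g,h)\,\varphi(\overline{g})\,\varphi(\overline{h})$. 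The two computed expressions share the common factor $s_x(\psi(gh))\,\varphi(\overline{g})\,\varphi(\overline{h})$; cancelling it and moving the surviving central scalars across $\overline{\Psi}(\alpha)(\overline{g}\,\overline{h})$ leaves exactly
$$\overline{\Psi}(\alpha)(\overline{g})\,\overline{\Psi}(\alpha)(\overline{h}) = \overline{\Psi}(\alpha)(\overline{g}\,\overline{h})\,(\Psi(\alpha))^*(c_x)(g,h)\,c_x(g,h)^{-1},$$
which is (17).

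For the consequence, I would recall from the preceding lemma that $\overline{\Psi}(\alpha)$ is already a bijection of $\overline{G}$ restricting to the identity on the central line $\mathbb{R}$; hence it is an automorphism precisely when it is multiplicative. By (17) this holds if and only if the defect $(\Psi(\alpha))^*(c_x)(g,h)\,c_x(g,h)^{-1}$ is trivial for all $g,h \in G$, i.e. if and only if $(\Psi(\alpha))^*(c_x) = c_x$ as $2$-cocycles. Finally I would invoke Proposition 5(3): since $c_x$ is the homogeneous representative of $x$, the equality $(\Psi(\alpha))^*(c_x) = c_x$ holds exactly when $x$ is fixed by the isometry $(\Psi(\alpha))_b$, that is, when $x$ is invariant under the action of $\Psi(\alpha) \in \mathrm{Aut}(G)$ on $H_b^2(G,\mathbb{R})$. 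This is the asserted equivalence.

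The only genuinely delicate point — the single place demanding care rather than routine algebra — is the bookkeeping between the multiplicative notation of the statement, in which reals sit inside $\overline{G}$ via $j$, and the additive values taken by $\varphi$ and $c_x$; in particular one must track the sign convention so that $p^*(c_x) = -d\varphi$ expands to $\varphi(\overline{g}\,\overline{h}) = \varphi(\overline{g}) + \varphi(\overline{h}) + c_x(g,h)$ with the correct sign. Everything else is forced by the centrality of $j(\mathbb{R})$.
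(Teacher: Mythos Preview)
Your proof is correct and follows essentially the same route as the paper's: expand the left-hand side via definition (16), commute the central $\varphi$-factors, apply the defect formula for $s_x$ to produce $(\Psi(\alpha))^*(c_x)$, and use $p^*(c_x)=-d\varphi$ to produce the $c_x^{-1}$ factor. The paper compresses these steps into three displayed lines and omits any argument for the ``consequence'' part, which you supply by invoking Proposition~5(3); this is the intended justification.
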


\begin{proof}[D\'emonstration] En effet, avec les notations ci-dessus on peut \'ecrire :
\begin{eqnarray}
{\overline \Psi}(\alpha)(\overline{g}){\overline \Psi}(\alpha)
(\overline{h})&=&
[s_x(\Psi(\alpha)(p(\overline{g})))
s_x(\Psi(\alpha)(p(\overline{h})))]
[\varphi(\overline{g})\varphi(\overline{h})]\nonumber\\
&=&[s_x(\Psi(\alpha)(p(\overline{g}\overline{h})))
(\Psi(\alpha))^*(c_x)(p(\overline{g}), p(\overline{h}))]
[\varphi(\overline{g}\overline{h})
c_x(p(\overline{g}), p(\overline{h}))^{-1}]\nonumber\\
&=& {\overline \Psi}(\alpha)(\overline{g}\overline{h})
(\Psi(\alpha))^*(c_x)(p(\overline{g}), p(\overline{h}))
c_x(p(\overline{g}), p(\overline{h}))^{-1}\nonumber
\end{eqnarray}
\end{proof}

\begin{corollary} Soient $0 \longrightarrow \mathbb R\stackrel{j}{\longrightarrow}\overline{G} \stackrel{p}
{\longrightarrow} G \longrightarrow 1$ une extension centrale  induite par  une
classe de cohomologie born\'ee $x\in H_b^2(G, \mathbb R)$ ; et $1
\longrightarrow
G\stackrel{i}{\longrightarrow}\Gamma\stackrel{\sigma}{\longrightarrow}
\Pi\longrightarrow 1$
 une extension de groupes avec $\theta : \Pi\rightarrow \mathrm{Out}(G)$ d\'esigne  la repr\'esentation
 ext\'erieure qui lui est associ\'ee. Alors, il existe un homomorphisme $\overline{\theta} : \Pi\rightarrow
 \mathrm{Out}(\overline{G}, \mathbb R)=\{a\in \mathrm{Out}(\overline{G})/ a(t)=t, \forall t\in\mathbb R\}$ qui commute dans le
 diagramme suivant
$$\xymatrix{&\ar@{->}[d]^{\theta}\ar@{-->}[dl]_{\overline{\theta}} \Pi    \\
\mathrm{Out}(\overline{G}, \mathbb R)\ar@{->}[r]^{p^{\#}}&\mathrm{Out}(G)}$$ si et seulement si
la classe  $x\in H_b^2(G, \mathbb R)$ est $\Pi$-invariante. O\`u
$p^{\#} :  \mathrm{Out}(\overline{G}, \mathbb R)\rightarrow \mathrm{Out}(G)$ d\'esigne
l'homomorphisme naturel induit par la surjection $p : \overline{G}\rightarrow
G$.
 \end{corollary}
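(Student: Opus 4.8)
Le plan est d'exploiter la bijection canonique $\overline{\Psi}(\alpha):\overline G\to\overline G$ d\'efinie par l'expression (16), ainsi que les propri\'et\'es rassembl\'ees au lemme 3 et le crit\`ere de la proposition 8. Pour l'implication directe, je suppose que $x\in H_b^2(G,\mathbb R)$ est $\Pi$-invariante et je fixe $\alpha\in\Pi$. L'invariance de $x$ sous la classe ext\'erieure $\theta(\alpha)$ \'equivaut, via l'assertion 3) de la proposition 5 et la trivialit\'e de l'action des automorphismes int\'erieurs en cohomologie born\'ee (proposition 3), \`a l'invariance du $2$-cocycle homog\`ene $c_x$ sous le rel\`evement $\Psi(\alpha)\in\mathrm{Aut}(G)$. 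La proposition 8 assure alors que $\overline{\Psi}(\alpha)$ est un automorphisme du groupe $\overline G$; d'apr\`es les assertions 1) et 2) du lemme 3 il fixe la droite centrale $\mathbb R=\mathrm{Ker}(p)$ et pr\'eserve le quasi-morphisme $\varphi$, donc $\overline{\Psi}(\alpha)\in\mathrm{Aut}(\overline G,\mathbb R)$. Je pose $\overline\theta(\alpha):=[\overline{\Psi}(\alpha)]$.

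Il reste \`a voir que $\alpha\mapsto\overline\theta(\alpha)$ est un homomorphisme relevant $\theta$. L'assertion 4) du lemme 3, $\overline{\Psi}(\alpha)\circ\overline{\Psi}(\beta)=i_{F_x(\alpha,\beta)}\circ\overline{\Psi}(\alpha\beta)$, montre qu'au niveau des classes ext\'erieures on a $\overline\theta(\alpha)\,\overline\theta(\beta)=\overline\theta(\alpha\beta)$, de sorte que $\overline\theta:\Pi\to\mathrm{Out}(\overline G,\mathbb R)$ est bien un homomorphisme. Enfin l'assertion 1) du lemme 3, $p\circ\overline{\Psi}(\alpha)=\Psi(\alpha)\circ p$, exprime que $\overline{\Psi}(\alpha)$ induit $\Psi(\alpha)$ sur le quotient $G=\overline G/\mathbb R$; par cons\'equent $p^{\#}(\overline\theta(\alpha))=[\Psi(\alpha)]=\theta(\alpha)$ et le diagramme commute.

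Pour la r\'eciproque, je me donne un homomorphisme $\overline\theta:\Pi\to\mathrm{Out}(\overline G,\mathbb R)$ tel que $p^{\#}\circ\overline\theta=\theta$ et je fixe $\alpha\in\Pi$. Choisissons un repr\'esentant $A_\alpha\in\mathrm{Aut}(\overline G,\mathbb R)$ de $\overline\theta(\alpha)$; quitte \`a le composer par un automorphisme int\'erieur de $\overline G$ (op\'eration qui ne change ni sa classe dans $\mathrm{Out}(\overline G,\mathbb R)$, ni $\varphi$ qui est invariant par conjugaison), on peut supposer que $A_\alpha$ rel\`eve exactement $\Psi(\alpha)$, c'est-\`a-dire $p\circ A_\alpha=\Psi(\alpha)\circ p$. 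Les deux applications $A_\alpha$ et $\overline{\Psi}(\alpha)$ rel\`event alors $\Psi(\alpha)$ et fixent $\mathbb R$; elles diff\`erent donc par une fonction centrale $\lambda:\overline G\to\mathbb R$, soit $A_\alpha(\overline g)=\lambda(\overline g)\,\overline{\Psi}(\alpha)(\overline g)$. En appliquant $\varphi$ et en utilisant l'assertion 2) du lemme 3 on trouve $\lambda(\overline g)=\varphi(A_\alpha(\overline g))-\varphi(\overline g)$. Le point d\'ecisif est que $\varphi\circ A_\alpha=\varphi$, d'o\`u $\lambda\equiv 0$ et $A_\alpha=\overline{\Psi}(\alpha)$: ainsi $\overline{\Psi}(\alpha)$ est un automorphisme et la proposition 8 fournit l'invariance de $x$ sous $\Psi(\alpha)$, donc sous $\theta(\alpha)$. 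Comme $\alpha$ est arbitraire, $x$ est $\Pi$-invariante.

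La difficult\'e principale se concentre dans cette derni\`ere \'etape, \`a savoir la pr\'eservation du quasi-morphisme $\varphi$ par les repr\'esentants de $\mathrm{Out}(\overline G,\mathbb R)$. C'est pr\'ecis\'ement ici qu'intervient de mani\`ere essentielle la structure born\'ee port\'ee par l'extension $\overline G$: la seule condition $a(t)=t$ fixant la droite centrale ne contr\^ole a priori que la classe ordinaire image de $x$ dans $H^2(G,\mathbb R)$, alors que la compatibilit\'e $\varphi\circ a=\varphi$ est ce qui permet d'identifier le rel\`evement \`a $\overline{\Psi}(\alpha)$ et de remonter, par la proposition 8, \`a l'invariance \emph{born\'ee} de $x$. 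Je v\'erifierais donc avec soin que la d\'efinition de $\mathrm{Out}(\overline G,\mathbb R)$ incorpore bien cette invariance de $\varphi$ (de fa\c{c}on \'equivalente, que tout automorphisme admissible de $\overline G$ fixant $\mathbb R$ la satisfait, car l'\'ecart $\varphi\circ a-\varphi$ descend en un quasi-morphisme homog\`ene sur $G$), ce contr\^ole \'etant la clef qui distingue l'invariance born\'ee de la simple invariance ordinaire.
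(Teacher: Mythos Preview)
Your forward direction is correct and is exactly the argument the paper has in mind: once $x$ is $\Pi$-invariant, assertion 3 of Proposition~5 gives $\Psi(\alpha)^*(c_x)=c_x$, relation~(17) of Proposition~6 then makes each $\overline{\Psi}(\alpha)$ a genuine automorphism of $\overline G$ fixing $\mathbb R$, and items 1) and 4) of Lemma~4 show that $\alpha\mapsto[\overline{\Psi}(\alpha)]$ is a homomorphism into $\mathrm{Out}(\overline G,\mathbb R)$ lifting~$\theta$. The paper states the corollary without proof (it is imported from \cite{Bou}, \cite{Bou1}), but this is clearly the intended mechanism.

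The reverse direction, however, has a genuine gap, and you have in fact located it yourself. Your argument rests on $\varphi\circ A_\alpha=\varphi$, and the justification you offer --- that the difference is $\mathbb R$-invariant and therefore descends to a homogeneous quasi-morphism on $G$ --- does \emph{not} force that quasi-morphism to vanish: groups typically carry many nonzero homogeneous quasi-morphisms. The bare condition ``$a(t)=t$ for all $t\in\mathbb R$'' only pins down the image of $x$ in ordinary $H^2(G,\mathbb R)$. Concretely, if $x$ lies in the kernel of the comparison map $H_b^2(G,\mathbb R)\to H^2(G,\mathbb R)$ (say $x=[d\psi]$ for an unbounded homogeneous quasi-morphism $\psi$), then $\overline G$ is isomorphic \emph{as an abstract group} to the direct product $\mathbb R\times G$, so $p^{\#}:\mathrm{Out}(\overline G,\mathbb R)\to\mathrm{Out}(G)$ is surjective and every outer representation $\theta$ lifts, whether or not it fixes $x$ in bounded cohomology. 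Thus, with $\mathrm{Out}(\overline G,\mathbb R)$ read literally as in the statement, the reverse implication fails.

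What this shows is that the corollary must be read with $\mathrm{Out}(\overline G,\mathbb R)$ consisting of classes of automorphisms that, in addition to fixing $\mathbb R$, preserve the homogeneous quasi-morphism $\varphi$ (equivalently, preserve the canonical section $s_x$). Under that strengthened definition your argument goes through verbatim: $\varphi\circ A_\alpha=\varphi$ is then part of the data, so $\lambda\equiv 0$, $A_\alpha=\overline{\Psi}(\alpha)$ is an automorphism, and Proposition~6 gives $\Psi(\alpha)^*(c_x)=c_x$, hence $\theta(\alpha)_b(x)=x$. Your closing intuition that the bounded structure on $\overline G$ must be built into the definition was exactly right; the gap lies in the definition as written, not in your line of reasoning.
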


\subsection{Construction d'une cocha\^\i ne born\'ee de \lq\lq composition\rq\rq}
Dans ce paragraphe, on garde les notations introduites ci-dessus et on
d\'esigne par $\mathrm{Sym}(\overline{G}, \mathbb R)$ (resp. $\mathrm{Aut}(\overline G, \mathbb
R)$) le groupe des bijections (resp. des automorphismes) de l'extension
centrale $\overline{G}$ qui sont \'egales \`a l'identit\'e sur la droite
centrale $\mathbb R=Ker(p)\subset \overline{G}$.

Rappelons que la restriction de $s_x$ \`a $Z(G)$ (\`a valeurs dans $Z(\overline{G})$) est un
homomorphisme (cf. aff. 4). Il en r\'esulte  que  l'application
$\overline{\Psi} : \Pi\rightarrow \mathrm{Sym}(\overline{G}:\mathbb R)$ qui est
d\'efinie par la formule (16) (cf. lemme 4)  induit un homomorphisme $\overline{\Psi} :
\Pi\rightarrow \mathrm{Aut}(Z(\overline{G}), \mathbb R)$ parce que,  pour tous  $\alpha,
\beta\in\Pi$ et $z\in Z(\overline G)$ on a,
$$\overline{\Psi}(\alpha)\circ\overline{\Psi}(\beta)(z) = F_x(\alpha, \beta)
\overline{\Psi}(\alpha\beta)(z)F_x(\alpha, \beta)^{-1}=
\overline{\Psi}(\alpha\beta)(z)
$$

Ainsi, en cons\'equence de ces remarques, dans tout le reste de ce travail
nous allons regarder le centre $Z(\overline G)\subseteq \overline{G}$ comme un
$\Pi$-module  dont la structure est induite par l'homomorphisme
$\overline{\Psi} : \Pi\rightarrow \mathrm{Aut}(Z(\overline{G}),  \mathbb R)$.

Maintenant, consid\'erons  une repr\'esentation ext\'erieure  $\theta :
\Pi\rightarrow \mathrm{Out}(G)$ et fixons un de ses noyaux abstraits $(\Psi, f)$. Avec la
formule (16) on obtient  une application $\overline{\Psi} : \Pi\rightarrow
\mathrm{Sym}(\overline{G}, \mathbb R)$. D'autre part, en imitant la formule (15) qui
d\'efinit le $3$-cocycle d'obstruction $K_{_{\Psi, f}} : \Pi^3\rightarrow
Z(G)$, on pourra d\'efinir une application $K_{_{x, \overline{\Psi}}} :
\Pi^3\rightarrow \overline{G}$ en posant  pour tous $\alpha$, $\beta$ et
$\gamma\in \Pi$~:
\begin{eqnarray}
K_{_{x, \overline{\Psi}}}(\alpha, \beta, \gamma) =
{\overline \Psi}(\alpha)(F_x(\beta, \gamma))F_x(\alpha, \beta\gamma)
F_x(\alpha\beta, \gamma)^{-1}F_x(\alpha, \beta)^{-1}
\end{eqnarray}

En effet, puisque l'image $p(K_{_{x, \overline{\Psi}}}) = K_{_{\Psi, f}}$ prend
ses valeurs dans le centre $Z(G)$ on en d\'eduit que l'expression (18)
d\'efinit une $3$-cocha\^\i ne ab\'elienne  $K_{_{x, \overline{\Psi}}} :
\Pi^3\rightarrow Z(\overline{G})$.  D'autre part, puisque l'on sait que la
section $s_x : Z(G)\rightarrow Z(\overline G)$ et le quasi-morphisme $\varphi :
Z(\overline G)\rightarrow \mathbb R$ sont des homomorphismes reli\'es  par
l'identit\'e (cf. aff. 4),
$$\overline{z} = s_x\circ p(\overline{z}) + j\circ
\varphi(\overline{z}), \qquad  \forall \overline{z}\in Z(\overline G),$$ il en
r\'esulte que les deux  $3$-cocha\^\i nes ab\'eliennes $K_{_{x, \overline{\Psi}}}
: \Pi^3\rightarrow Z(\overline{G})$ et $K_{_{\Psi, f}} : \Pi^3\rightarrow Z(G)$
sont elles  aussi  reli\'ees par l'indentit\'e,
\begin{eqnarray}
K_{_{x, \overline{\Psi}}} = \varphi_*(K_{_{x, \overline{\Psi}}}) +
(s_x)_*(K_{_{\Psi, f}}) \in Z(\overline G)\simeq \mathbb R\oplus Z(G).
\end{eqnarray}

La $3$-cocha\^\i ne image  $\varphi_*(K_{_{x, \overline{\Psi}}}) : \Pi^3
\rightarrow \mathbb R$ sera appel\'ee    cocha\^\i ne de composition. Nous
l'avons nomm\'ee  ainsi  parce que si on suppose que la repr\'esentation
ext\'erieure $\theta : \Pi\rightarrow Out(G)$ est associ\'ee \`a une extension
de groupes $1 \longrightarrow
G\stackrel{i}{\longrightarrow}\Gamma\stackrel{\sigma}{\longrightarrow}
\Pi\longrightarrow 1$,
 en prenant une extension centrale $0 \longrightarrow \mathbb R\stackrel{j}{\longrightarrow}\overline
 G{\rfl{\hfl{s}} {p}}G \longrightarrow 1$ qui repr\'esente la classe de cohomologie born\'ee
  $x\in H_b^2(G, \mathbb R)$ ;
la $2$-extension $0{\hbox to 6mm{\rightarrowfill}} \mathbb R\stackrel{j}{\hbox
to 6mm {\rightarrowfill}}\overline{G}\stackrel{i\circ p}{\hbox to 6mm
{\rightarrowfill}}\Gamma \stackrel{\sigma}{\hbox to 6mm {\rightarrowfill}} \Pi
\stackrel{}{\hbox to 6mm {\rightarrowfill}}1$ permet alors de retrouver la
$3$-cocha\^\i ne $\varphi_*(K{_{x, \overline{\Psi}}}) : \Pi^3 \rightarrow
\mathbb R$ en proc\'edant comme si le groupe $\overline{G}$ \'etait un
$\Pi$-module tordu au sens de Whitehead (cf. \cite{Bro} page 102).

\begin{proposition}
La cocha\^\i ne de composition $c_{_{x, \overline{\Psi}}}:= \varphi_*( K_{_{x,
\overline{\Psi}}}) : \Pi^3\longrightarrow \mathbb R$ est donn\'ee explicitement
par la formule,
\begin{eqnarray}
\qquad c_{_{x, \overline{\Psi}}}(\alpha, \beta, \gamma) = c_x(\Psi(\alpha)(f(\beta,
\gamma), f(\alpha, \beta\gamma)) - c_x(f(\alpha, \beta), f(\alpha\beta,
\gamma)), \ \ \forall \alpha, \beta, \gamma\in\Pi
\end{eqnarray}

En cons\'equence,  $c_{_{x, \overline{\Psi}}}=\varphi_*( K_{_{x,
\overline{\Psi}}}) : \Pi^3\longrightarrow \mathbb R$ est une  $3$-cocha\^\i ne
born\'ee.
\end{proposition}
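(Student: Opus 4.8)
The plan is to evaluate the homogeneous quasi-morphism $\varphi$ on the explicit product (18) defining $K_{_{x, \overline{\Psi}}}(\alpha, \beta, \gamma)$, exploiting throughout the three facts $\varphi\circ s_x = 0$, $\varphi\circ j = \mathrm{id}$, and the additivity of $\varphi$ on products involving a central factor (cf. aff. 4). The computation will re-prove the decomposition (19) while reading off its $\mathbb{R}$-component, which is precisely $c_{_{x, \overline{\Psi}}}$.

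First I would simplify the four factors of (18). By formula (16) and the identity $\varphi(F_x(\beta, \gamma)) = \varphi(s_x(f(\beta, \gamma))) = 0$, the first factor reduces to $\overline{\Psi}(\alpha)(F_x(\beta, \gamma)) = s_x(\Psi(\alpha)(f(\beta, \gamma)))$, and the remaining three factors $F_x(\alpha, \beta\gamma)$, $F_x(\alpha\beta, \gamma)$, $F_x(\alpha, \beta)$ are already pure $s_x$-images. Abbreviating $a = \Psi(\alpha)(f(\beta, \gamma))$, $b = f(\alpha, \beta\gamma)$, $c = f(\alpha\beta, \gamma)$ and $d = f(\alpha, \beta)$, this turns (18) into $K_{_{x, \overline{\Psi}}}(\alpha, \beta, \gamma) = s_x(a)\, s_x(b)\, s_x(c)^{-1}\, s_x(d)^{-1}$. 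I would then multiply out this product in $\overline{G}$ using the defining defect relation (8) in the form $s_x(g_1)s_x(g_2) = j(c_x(g_1, g_2))\, s_x(g_1 g_2)$. Collecting the central corrections on the left gives $K_{_{x, \overline{\Psi}}}(\alpha, \beta, \gamma) = j\bigl(c_x(a, b) - c_x(abc^{-1}, c) - c_x(abc^{-1}d^{-1}, d)\bigr)\, s_x(abc^{-1}d^{-1})$, where $abc^{-1}d^{-1} = K_{_{\Psi, f}}(\alpha, \beta, \gamma)$ lies in $Z(G)$. Applying $\varphi$ and using $\varphi\circ j = \mathrm{id}$ and $\varphi\circ s_x = 0$ then yields $c_{_{x, \overline{\Psi}}}(\alpha, \beta, \gamma) = c_x(a, b) - c_x(abc^{-1}, c) - c_x(abc^{-1}d^{-1}, d)$.

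The only real subtlety is to reduce this three-term expression to the two terms of (20), and the decisive point is the vanishing $c_x(z, g) = 0$ for every $z \in Z(G)$ and $g \in G$. This I would establish as follows: since $s_x(z)$ is central in $\overline{G}$ (aff. 4), additivity of $\varphi$ on a central factor gives $\varphi(s_x(z)s_x(g)) = \varphi(s_x(z)) + \varphi(s_x(g)) = 0$, while relation (8) gives $\varphi(s_x(z)s_x(g)) = c_x(z, g)$; comparing the two yields $c_x(z, g) = 0$. Plugging this into the cocycle identity $d c_x = 0$ shows $c_x(zg, h) = c_x(g, h)$ whenever $z$ is central. As $abc^{-1}d^{-1} = K_{_{\Psi, f}} \in Z(G)$, this gives $c_x(abc^{-1}, c) = c_x(K_{_{\Psi, f}}\, d, c) = c_x(d, c)$ and $c_x(abc^{-1}d^{-1}, d) = c_x(K_{_{\Psi, f}}, d) = 0$, whence the asserted formula (20). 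Finally, boundedness is immediate: the right-hand side of (20) is a difference of two values of the bounded cocycle $c_x$, so $\|c_{_{x, \overline{\Psi}}}\|_\infty \leq 2\,\|c_x\|_\infty$. The main obstacle is thus isolating and killing the spurious central defect terms, which rests entirely on the vanishing of $c_x$ on a central argument.
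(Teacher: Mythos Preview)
Your argument is correct; the key facts you invoke ($\varphi\circ s_x=0$, the defect relation for $s_x$, and the centrality of $K_{\Psi,f}$) are exactly the right ones, and your auxiliary lemma $c_x(z,g)=0$ for $z\in Z(G)$ is a legitimate consequence of affirmation~4 (indeed $s_x(zg)=s_x(z)s_x(g)$ says it directly).

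The paper's proof uses the same ingredients but organizes the computation more economically. Rather than expanding the four-fold product $s_x(a)s_x(b)s_x(c)^{-1}s_x(d)^{-1}$ and then pruning a three-term expression down to two via the central-vanishing lemma, the paper first \emph{rearranges} (18) as
\[
K_{x,\overline{\Psi}}(\alpha,\beta,\gamma)\cdot F_x(\alpha,\beta)F_x(\alpha\beta,\gamma)
=\overline{\Psi}(\alpha)(F_x(\beta,\gamma))\,F_x(\alpha,\beta\gamma),
\]
and applies $\varphi$ to both sides using $p^*(c_x)=-d\varphi$, i.e.\ $\varphi(\bar g\bar h)=\varphi(\bar g)+\varphi(\bar h)+c_x(p(\bar g),p(\bar h))$. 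Since each side is now a product of only two $s_x$-images (plus the central $K$ on the left, which splits off additively), the two-term formula (20) falls out immediately with no intermediate simplification. Your route trades this rearrangement trick for a direct expansion plus a cleanup step; it is a bit longer but has the virtue of making explicit the fact $c_x|_{Z(G)\times G}=0$, which is useful elsewhere.
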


\begin{proof}[D\'emonstration] Puisque pour tous $\overline{z}\in
Z(\overline{G})$ et $\overline{g}\in \overline{G}$ le quasi-morphisme
homog\`ene $\varphi : \overline{G}\rightarrow \mathbb R$ v\'erifie la relation,
$\varphi(\overline{z}\ \overline{g})=\varphi(\overline{z}) +
\varphi(\overline{g})$ (cf. aff. 4), donc, si on l'applique sur
les deux membres de l'expression $$K_{_{x, \overline{\Psi}}}(\alpha, \beta,
\gamma)F_x(\alpha, \beta) F_x(\alpha\beta, \gamma) = {\overline
\Psi}(\alpha)(F_x(\beta, \gamma))F_x(\alpha, \beta\gamma) $$ d\'eduite de
(18) on obtient :
$$
\varphi({\overline \Psi}(\alpha)(F_x(\beta, \gamma))F_x(\alpha,
\beta\gamma)) =
\varphi(K_{_{x, \overline{\Psi}}}(\alpha, \beta, \gamma)) + \varphi(F_x(\alpha,
\beta)F_x(\alpha\beta, \gamma)). $$

Ensuite, d\'eveloppons  les deux membres ci-dessus   en appliquant la relation
$p^*(c_x)=-d\varphi$, le fait qu'on a $\varphi\circ s_x=0$ implique
$\varphi\circ F_x=0$ et que $\varphi({\overline \Psi}(\alpha)(\bar g))=
\varphi(\bar g), \forall \alpha\in \Pi$ et $\bar{g}\in \overline{G}$.
\begin{eqnarray}
\varphi({\overline \Psi}(\alpha)(F_x(\beta, \gamma))F_x(\alpha,
\beta\gamma))
&=& \varphi({\overline \Psi}(\alpha)(F_x(\beta, \gamma)))+\varphi(F_x(\alpha,
\beta\gamma)) \nonumber \\
&+& c_x({\Psi}(\alpha)(f(\beta, \gamma), f(\alpha,
\beta\gamma))\nonumber \\
&=& c_x(
{\Psi}(\alpha)(f(\beta, \gamma), f(\alpha,
\beta\gamma)) \nonumber
\end{eqnarray}
et
\begin{eqnarray}
\varphi(K_{_{x, \overline{\Psi}}}(\alpha, \beta, \gamma)F_x(\alpha,
\beta)F_x(\alpha\beta, \gamma))
&=& \varphi(K_{_{x, \overline{\Psi}}}(\alpha, \beta, \gamma)) +
\varphi(F_x(\alpha,
\beta)F_x(\alpha\beta, \gamma))  \nonumber \\
&=& \varphi(K_{_{x, \overline{\Psi}}}(\alpha, \beta, \gamma)) +
\varphi(F_x(\alpha,
\beta))+\varphi(F_x(\alpha\beta, \gamma)) \nonumber \\
&+& c_x(f(\alpha,
\beta), f(\alpha\beta, \gamma))
\nonumber\\
&=&
\varphi(K_{_{x, \overline{\Psi}}}(\alpha, \beta, \gamma)) + c_x(f(\alpha,
\beta), f(\alpha\beta, \gamma)).
\nonumber
\end{eqnarray}

Ainsi, en comparant les deux d\'eveloppements on d\'eduit l'expresion
(20).\end{proof}

\begin{proposition}
Le cobord de la $3$-cocha\^\i ne  $K_{_{x, \overline{\Psi}}} : \Pi^3\
\rightarrow Z(\overline{G})$ est donn\'e par l'expression,
\begin{eqnarray}
d(K_{_{x, \overline{\Psi}}})(\alpha, \beta, \gamma, \zeta) =
c_{_{x, \overline{\Psi}}}(\beta, \gamma, \zeta) -
c_{_{\Psi(\alpha)_b(x), \overline{\Psi}}}(\beta, \gamma, \zeta)\in\mathbb
R.
\end{eqnarray}
\end{proposition}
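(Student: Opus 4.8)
The plan is to compute the coboundary inside the group $Z(\overline{G})$, push it onto the central line $\mathbb{R}=\mathrm{Ker}(p)$, and then read off its real value by means of the homogeneous quasimorphism $\varphi$.

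First I would check that $d(K_{x,\overline{\Psi}})$ takes its values in $\mathbb{R}$. By property~(1) of Lemma~4 the projection $p:\overline{G}\rightarrow G$ intertwines the $\overline{\Psi}$-action on $Z(\overline{G})$ with the $\theta$-action on $Z(G)$, hence $p$ commutes with the coboundary operators. Since $p\circ K_{x,\overline{\Psi}}=K_{\Psi,f}$ and the classical obstruction $3$-cochain $K_{\Psi,f}:\Pi^3\rightarrow Z(G)$ is a cocycle (cf.~\cite{Bro},~\cite{Ma}), one obtains $p\big(d(K_{x,\overline{\Psi}})\big)=d(K_{\Psi,f})=0$; therefore $d(K_{x,\overline{\Psi}})$ lands in $\mathrm{Ker}(p)=\mathbb{R}$.

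Next, since $d(K_{x,\overline{\Psi}})\in\mathbb{R}$ and $\varphi$ restricts to the identity on $\mathbb{R}$, I would evaluate it by applying $\varphi$. Using that $\varphi$ is a homomorphism on $Z(\overline{G})$ (Affirmation~4), that it is $\overline{\Psi}(\alpha)$-invariant (property~(2) of Lemma~4), and that $\varphi_*(K_{x,\overline{\Psi}})=c_{x,\overline{\Psi}}$ is the composition cochain~$(20)$, applying $\varphi$ to the five factors of the inhomogeneous coboundary — the invariance collapsing the $\overline{\Psi}(\alpha)$-action on the leading factor — yields
\begin{eqnarray*}
d(K_{x,\overline{\Psi}})(\alpha,\beta,\gamma,\zeta)&=&c_{x,\overline{\Psi}}(\beta,\gamma,\zeta)-c_{x,\overline{\Psi}}(\alpha\beta,\gamma,\zeta)+c_{x,\overline{\Psi}}(\alpha,\beta\gamma,\zeta)\\
&-&c_{x,\overline{\Psi}}(\alpha,\beta,\gamma\zeta)+c_{x,\overline{\Psi}}(\alpha,\beta,\gamma).
\end{eqnarray*}

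The final and main step is to show that the four trailing terms sum to $-c_{\Psi(\alpha)_b(x),\overline{\Psi}}(\beta,\gamma,\zeta)$, i.e. the identity
$$-c_{x,\overline{\Psi}}(\alpha\beta,\gamma,\zeta)+c_{x,\overline{\Psi}}(\alpha,\beta\gamma,\zeta)-c_{x,\overline{\Psi}}(\alpha,\beta,\gamma\zeta)+c_{x,\overline{\Psi}}(\alpha,\beta,\gamma)=-c_{\Psi(\alpha)_b(x),\overline{\Psi}}(\beta,\gamma,\zeta).$$
Here I would insert the explicit expression~$(20)$ into each term and simplify by means of the non-abelian $2$-cocycle relation~$(14)$ for $f$ — which holds only modulo the central obstruction $K_{\Psi,f}\in Z(G)$ of~$(15)$ — together with the ordinary $2$-cocycle identity satisfied by $c_x$; the automorphism $\Psi(\alpha)$ is then absorbed into the coefficient via $\Psi(\alpha)^*(c_x)=c_{\Psi(\alpha)_b(x)}$ (Proposition~5, assertion~3), which is what converts $c_x$ into $c_{\Psi(\alpha)_b(x)}$ in the one surviving term. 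The delicate point — the main obstacle — is that the central factors $K_{\Psi,f}(\cdots)$ produced by~$(14)$ must cancel from the real-valued answer; this cancellation is nothing but the vanishing of the $Z(G)$-component already secured in the first step, yet carrying it out explicitly in terms of the reals $c_x(\cdot,\cdot)$ is the bulk of the argument and runs strictly parallel to the classical verification that $K_{\Psi,f}$ is a cocycle (cf.~\cite{Bro},~\cite{Ma}), now weighted through the cocycle $c_x$.
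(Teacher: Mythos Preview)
Your reduction in the first two steps is correct and clean: using $p\circ K_{x,\overline\Psi}=K_{\Psi,f}$ together with $dK_{\Psi,f}=0$ forces $dK_{x,\overline\Psi}$ into $\mathbb R$, and then $\varphi$ (a homomorphism on $Z(\overline G)$, invariant under $\overline\Psi(\alpha)$) turns the five-term coboundary into the trivial-coefficient coboundary of $c_{x,\overline\Psi}$. Note, however, that this is exactly the passage the paper makes \emph{from} Proposition~8 \emph{to} Corollary~4 via~(19); you are running that logic in reverse, so everything rests on your Step~3, which is precisely the content of Corollary~4.

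The paper does \emph{not} attempt your Step~3 directly. Instead it stays in $\overline G$: it writes the element
$L=\overline\Psi(\alpha)\bigl(\overline\Psi(\beta)(F_x(\gamma,\zeta))\,F_x(\beta,\gamma\zeta)\bigr)\,F_x(\alpha,\beta\gamma\zeta)$
and expands it in two ways, repeatedly using~(18) and, crucially, formula~(17) of Proposition~6, which says that the defect of $\overline\Psi(\alpha)$ from being a homomorphism is exactly the central factor $(\Psi(\alpha))^*(c_x)\cdot c_x^{-1}$. Comparing the two expansions produces~(21) directly, with the $\Psi(\alpha)^*(c_x)$-terms (hence $c_{\Psi(\alpha)_b(x),\overline\Psi}$) appearing automatically from~(17). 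The point is that the group law of $\overline G$ already encodes $c_x$, so the bookkeeping of $c_x$-values that you would have to do by hand in Step~3 is absorbed into the associativity of $\overline G$.

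Your Step~3 is only a plan: plugging~(20) into the four trailing terms and simplifying via $dc_x=0$ and~(15) is feasible, but it amounts to redoing the classical $dK_{\Psi,f}=0$ computation while simultaneously tracking all the real ``error terms'' coming from $c_x$; you have not indicated how the cocycle identity for $c_x$ is to be applied (to which triples), nor how the terms recombine into the single expression $c_{\Psi(\alpha)_b(x),\overline\Psi}(\beta,\gamma,\zeta)$. That reorganisation is exactly what the two expansions of $L$ accomplish in the paper, and without it your argument remains a sketch rather than a proof.
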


\begin{proof}[D\'emonstration] Pour tous $\alpha, \beta, \gamma$ et $\zeta$
\'el\'ements du groupe $\Pi$ nous poserons :
$$ L =
{\overline \Psi}(\alpha)\Big({\overline \Psi}(\beta)(F_x(\gamma, \zeta))
F_x(\beta, \gamma\zeta)\Big)F_x(\alpha, \beta\gamma\zeta)\in\overline{G}.$$

Ci-dessous, nous d\'eveloppons l'\'el\'ement $L\in\overline{G}$ de deux fa\c
cons :

\noindent 1) Dans le premier d\'eveloppement,   nous  utilisons l'expression
(18) qui d\'efinit la cocha\^\i ne $K_{_{x, \overline{\Psi}}}$. De plus, nous
allons utiliser l'expression (17) de la proposition 6 qui contr\^ole la
d\'eviation de la bijection $\overline{\Psi}(\alpha) :
\overline{G}\rightarrow\overline{G}$ \`a \^etre un automorphisme. De m\^eme,
nous allons appliquer la formule (4) du lemme 4 qui contr\^ole la d\'eviation de
l'application $\overline{\Psi} : \Pi\rightarrow
\mathcal{S}ym(\overline{G}:\mathbb R)$ \`a \^etre un homomorphisme.
\begin{eqnarray}
L &=&
{\overline \Psi}(\alpha)\Big({\overline \Psi}(\beta)(F_x(\gamma, \zeta))
F_x(\beta, \gamma\zeta)\Big)F_x(\alpha, \beta\gamma\zeta)\nonumber
\\
&=&{\overline \Psi}(\alpha)\circ{\overline \Psi}(\beta)(F_x(\gamma, \zeta))
[{\overline \Psi}(\alpha)(F_x(\beta, \gamma\zeta))
     F_x(\alpha, \beta\gamma\zeta)] \nonumber\\
   && c_x(\Psi(\beta)(f(\gamma, \zeta)), f(\beta, \gamma\zeta))
 (\Psi(\alpha))^*(c_{x}(\Psi(\beta)(f(\gamma, \zeta)), f(\beta, \gamma\zeta))^{-1}
  \nonumber\\
 &=& i_{F_x(\alpha, \beta)}\circ{\overline \Psi}(\alpha\beta)(F_x(\gamma, \zeta))
   [F_x(\alpha, \beta)F_x(\alpha\beta, \gamma\zeta)
    K_{_{x, \overline{\Psi}}}(\alpha, \beta, \gamma\zeta)]\nonumber\\
   && c_x(\Psi(\beta)(f(\gamma, \zeta)), f(\beta, \gamma\zeta))
  (\Psi(\alpha))^*(c_{x}(\Psi(\beta)(f(\gamma, \zeta)), f(\beta, \gamma\zeta))^{-1}
  \nonumber\\
 &=& F_x(\alpha, \beta)[{\overline \Psi}(\alpha\beta)(F_x(\gamma, \zeta))
      F_x(\alpha\beta, \gamma\zeta)]
      K_{_{x, \Psi}}(\alpha, \beta, \gamma\zeta)
      \nonumber\\
   && c_x(\Psi(\beta)(f(\gamma, \zeta)), f(\beta, \gamma\zeta))
(\Psi(\alpha)^*(c_{x})(\Psi(\beta)(f(\gamma, \zeta)),
f(\beta, \gamma\zeta))^{-1}
  \nonumber\\
 &=& [F_x(\alpha, \beta)F_x(\alpha\beta, \gamma)F_x(\alpha\beta\gamma, \zeta)]
K_{_{x, \overline{\Psi}}}(\alpha\beta, \gamma, \zeta)
K_{_{x, \overline{\Psi}}}(\alpha, \beta, \gamma\zeta)\nonumber\\
   && c_x(\Psi(\beta)(f(\gamma, \zeta)), f(\beta, \gamma\zeta))
(\Psi(\alpha))^*(c_{x})(\Psi(\beta)(f(\gamma, \zeta)), f(\beta, \gamma\zeta))^{-1}
  \nonumber
\end{eqnarray}

\noindent 2) Dans ce second d\'eveloppement de l'expression  $L$, \`a c\^ot\'e
des expressions (17) et (18) nous allons  utiliser le fait que les
bijections $\overline{\Psi}(\alpha)$ fixent les points de $\mathbb R=Ker(p)$ et
que pour tous $\overline{g}\in \overline{G}$ et $\overline{z}\in
Z(\overline{G})$ on a~: $\overline{\Psi}(\alpha)(\overline{g}\ \overline{z})=
\overline{\Psi}(\alpha)(\overline{g})\overline{\Psi}(\alpha)(\overline{z})$
(cf. lemme 4).

\begin{eqnarray}
L&=& {\overline \Psi}(\alpha)\Big({\overline \Psi}(\beta)(F_x(\gamma, \zeta))
F_x(\beta, \gamma\zeta)\Big)F_x(\alpha, \beta\gamma\zeta)\nonumber\\
&=& {\overline \Psi}(\alpha)\Big(K_{_{x, \overline{\Psi}}}(\beta, \gamma, \zeta)
F_x(\beta, \gamma)F_x(\beta\gamma, \zeta)\Big)F_x(\alpha, \beta\gamma\zeta)
\nonumber\\
&=&{\overline \Psi}(\alpha)\Big(F_x(\beta, \gamma)F_x(\beta\gamma,
 \zeta)\Big)F_x(\alpha, \beta\gamma\zeta)
\overline{\Psi}(\alpha)(K_{_{x,\overline{\Psi}}}(\beta, \gamma, \zeta))
\nonumber\\
&=&{\overline \Psi}(\alpha)
(F_x(\beta, \gamma))[{\overline \Psi}(\alpha)(F_x(\beta\gamma, \zeta))
F_x(\alpha, \beta\gamma\zeta)]
\overline{\Psi}(\alpha)(K_{_{x,\overline{\Psi}}}(\beta, \gamma, \zeta))
\nonumber\\
&& c_x(f(\beta, \gamma), f(\beta\gamma, \zeta))
(\Psi(\alpha))^*(c_{x})(f(\beta, \gamma), f(\beta\gamma, \zeta))^{-1}\nonumber\\
&=&{\overline \Psi}(\alpha)(F_x(\beta, \gamma))
[K_{_{x, \overline{\Psi}}}(\alpha, \beta\gamma, \zeta)F_x(\alpha, \beta\gamma)
F_x(\alpha\beta\gamma, \zeta)]
\overline{\Psi}(\alpha)(K_{_{x, \overline{\Psi}}}(\beta, \gamma, \zeta))
\nonumber\\
&& c_x(f(\beta, \gamma), f(\beta\gamma, \zeta))
(\Psi(\alpha)^*(c_{x})(f(\beta, \gamma), f(\beta\gamma, \zeta))^{-1}
\nonumber\\
&=&[{\overline \Psi}(\alpha)(F_x(\beta, \gamma))F_x(\alpha, \beta\gamma)]
F_x(\alpha\beta\gamma, \zeta)
\overline{\Psi}(\alpha)(K_{_{x, \overline{\Psi}}})(\beta, \gamma, \zeta))
K_{_{x, \overline{\Psi}}}(\alpha, \beta\gamma, \zeta)\nonumber\\
&& c_x(f(\beta, \gamma), f(\beta\gamma, \zeta))
(\Psi(\alpha))^*(c_{x})(f(\beta, \gamma), f(\beta\gamma, \zeta))^{-1}
\nonumber \\
&=& [F_x(\alpha, \beta)F_x(\alpha\beta, \gamma)F_x(\alpha\beta\gamma, \zeta)]
\overline{\Psi}(\alpha)(K_{_{x, \overline{\Psi}}}(\beta, \gamma, \zeta))
K_{_{x, \overline{\Psi}}}(\alpha, \beta, \gamma)
K_{_{x, \overline{\Psi}}}(\alpha, \beta\gamma, \zeta) \nonumber\\
&& c_x(f(\beta, \gamma), f(\beta\gamma, \zeta))
(\Psi(\alpha))^*(c_{x})(f(\beta, \gamma), f(\beta\gamma, \zeta))^{-1}\nonumber
\end{eqnarray}

La formule de diff\'erence (21)  s'obtient maintenant par comparaison des deux
d\'eveloppements de l'\'el\'ement $L$ dans le groupe $\overline{G}$.
\end{proof}

Observons que puisque d'apr\`es la formule (19) on sait que la  cocha\^\i ne
$K_{_{x, \overline{\Psi}}} : \Pi^3\rightarrow Z(\overline{G})$ est \'egale \`a
la somme,
$$K_{_{x, \overline{\Psi}}} = \varphi_*(K_{_{x, \overline{\Psi}}}) +
(s_x)_*(K_{_{\Psi, f}}) \in Z(\overline G)\simeq \mathbb R\oplus Z(G)$$
et puisque la cocha\^\i ne image $s_x^*(K_{_{\Psi, f}}) :
\Pi^3\stackrel{K_{_{\Psi, f}}}{\longrightarrow}
 Z(G)\stackrel{s_x}{\longrightarrow}Z(\overline{G})$ est un $3$-cocycle, l'expression  (21)
  \'etablie dans la proposition 8 permet  de d\'eduire  le corollaire~:

\begin{corollary}
Pour toute classe de cohomologie born\'ee $x\in H_b^2(G, \mathbb R)$ le cobord
de la cocha\^\i ne born\'ee de composition  $\varphi_*(K_{_{x,
\overline{\Psi}}}) = c_{_{x, \overline{\Psi}}} : \Pi^3 \rightarrow \mathbb R$
est \'egal \`a,
\begin{eqnarray}
d(c_{_{x, \overline{\Psi}}})(\alpha, \beta, \gamma, \zeta) =
c_{_{x, \overline{\Psi}}}(\beta, \gamma, \zeta) -
c_{_{\Psi(\alpha)_b(x), \overline{\Psi}}}(\beta, \gamma, \zeta)
\end{eqnarray}
\end{corollary}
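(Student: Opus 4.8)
Le plan est de d\'eduire la formule (22) de la proposition 8 et de la d\'ecomposition (19), en exploitant le fait que l'isomorphisme $Z(\overline{G})\simeq\mathbb R\oplus Z(G)$ (cf. aff. 4) est $\Pi$-\'equivariant. L'id\'ee directrice est que la diff\'erentielle $d$ du complexe des cocha\^\i nes de $\Pi$ respecte cette d\'ecomposition, de sorte que le corollaire ne sera qu'une lecture de la proposition 8 sur le facteur central $\mathbb R$.

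D'abord, je v\'erifierai que les deux inclusions $j : \mathbb R\hookrightarrow Z(\overline{G})$ et $s_x : Z(G)\hookrightarrow Z(\overline{G})$ (cette derni\`ere \'etant un homomorphisme d'apr\`es l'affirmation 4) sont $\Pi$-\'equivariantes pour les actions induites par $\overline{\Psi}$. Pour $j$ ceci r\'esulte du fait que chaque bijection $\overline{\Psi}(\alpha)$ fixe la droite centrale $\mathbb R=\mathrm{Ker}(p)$ (cf. lemme 4), de sorte que $\Pi$ agit trivialement sur ce facteur ; pour $s_x$ ceci r\'esulte de la formule (16) jointe \`a $\varphi\circ s_x=0$, qui donnent $\overline{\Psi}(\alpha)(s_x(g))=s_x(\Psi(\alpha)(g))$ pour tout $g\in Z(G)$. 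Par cons\'equent les op\'erateurs induits $j_*$ et $(s_x)_*$ commutent avec la diff\'erentielle $d$.

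Ensuite, je rappellerai que $K_{_{\Psi, f}} : \Pi^3\rightarrow Z(G)$ est un $3$-cocycle (cf. 4.1.2), d'o\`u $d\bigl((s_x)_*(K_{_{\Psi, f}})\bigr)=(s_x)_*(d\,K_{_{\Psi, f}})=0$. En appliquant alors $d$ aux deux membres de la d\'ecomposition (19), le terme provenant de $K_{_{\Psi, f}}$ dispara\^\i t et il reste
\[
d(K_{_{x, \overline{\Psi}}}) = j_*\bigl(d(c_{_{x, \overline{\Psi}}})\bigr),
\]
une cocha\^\i ne \`a valeurs enti\`erement dans $j(\mathbb R)$. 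Comme la proposition 8 affirme d\'ej\`a que $d(K_{_{x, \overline{\Psi}}})(\alpha, \beta, \gamma, \zeta)=c_{_{x, \overline{\Psi}}}(\beta, \gamma, \zeta)-c_{_{\Psi(\alpha)_b(x), \overline{\Psi}}}(\beta, \gamma, \zeta)$ prend ses valeurs dans $\mathbb R$, la comparaison de ces deux expressions (via l'injectivit\'e de $j$, qui est l'identit\'e sur le facteur $\mathbb R$) fournira imm\'ediatement la formule (22).

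Le seul point qui demande une v\'erification est la compatibilit\'e de $d$ avec la d\'ecomposition $\Pi$-\'equivariante $Z(\overline{G})\simeq\mathbb R\oplus Z(G)$, c'est-\`a-dire l'\'equivariance des sections $j$ et $s_x$ ; une fois cette compatibilit\'e acquise, le corollaire se r\'eduit \`a la projection de l'\'egalit\'e de la proposition 8 sur la composante r\'eelle, aucun calcul suppl\'ementaire n'\'etant requis.
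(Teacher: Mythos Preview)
Your proposal is correct and follows essentially the same approach as the paper: apply $d$ to the decomposition (19), use that $(s_x)_*(K_{\Psi,f})$ is a $3$-cocycle, and compare with formula (21) of Proposition~8. You are simply more explicit than the paper in checking the $\Pi$-equivariance of $j$ and of $s_x\vert_{Z(G)}$, which is exactly what is needed to justify that $(s_x)_*(K_{\Psi,f})$ is closed and that $d$ respects the splitting.
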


Notons qu'en cons\'equence  de la proposition 8 et de son corollaire 4,  si on
suppose que la classe de cohomologie born\'ee  $x\in H_b^2(G, \mathbb R)$ est
$\Pi$-invariante  comme dans  \cite{Bou} et \cite{Bou1},   on retrouve le fait
que $K_{_{x, \overline{\Psi}}} : \Pi^3\ \rightarrow Z(\overline{G})$  est un
$3$-cocycle et que la cocha\^\i ne de composition  $\varphi_*(K_{_{x,
\overline{\Psi}}})=c_{_{x, \overline{\Psi}}} : \Pi^3\rightarrow \mathbb R$ est
un $3$-cocycle born\'e qui repr\'esente par cons\'esquent un \'el\'ement de
l'espace de cohomologie born\'ee $H_b^3(\Pi, \mathbb R)$. Si l'on suppose enfin que  $\theta :
\Pi\rightarrow Out(G)$ provient d'une extension de groupes,  nous avons
d\'emontr\'e  dans \cite{Bou} que l'application lin\'eaire
$$\begin{array}{ccccc}
\delta : &H_b^2(G, \mathbb R)^\Pi&\longrightarrow &H_b^3(\Pi, \mathbb R) \\
&x&  \longmapsto & [\varphi_*(K_{_{x, \overline{\Psi}}})]
\end{array}$$
appel\'ee  op\'erateur de transgression  rend la suite (1) exacte.

\subsection{Un repr\'esentant canonique de la classe de cohomologie
 $[\theta]$}

Dans ce paragraphe, \'etant donn\'ee  une repr\'esentation ext\'erieure $\theta
: \Pi\rightarrow Out(G)$ nous munissons l'espace d'homologie $\ell_1$-r\'eduite
$\overline{H}_2^{\ell_1}(G, \mathbb R)$ de la structure de $\Pi$-module de
Banach induite par $\theta$. Et, pour tout noyau  abstrait $(\Psi, f)$ de la
repr\'esentation ext\'erieure $\theta$  nous nous proposons de  d\'emontrer que
la cocha\^\i ne born\'ee $\theta_{_{\Psi, f}} : \Pi^3\rightarrow
Z_2^{\ell_1}(G, \mathbb R)$ qui est d\'efinie par l'expression,
\begin{eqnarray}
\theta_{_{\Psi, f}}(\alpha, \beta, \gamma) &=
&\mathbf{m}_{_{2}}(\Psi(\alpha)(f(\beta, \gamma)),
f(\alpha, \beta\gamma)) -
\mathbf{m}_{_{2}}(f(\alpha, \beta), f(\alpha\beta, \gamma))
\end{eqnarray}
 induit un $3$-cocycle born\'e $\overline{\theta}_{_{\Psi, f}} : \Pi^3 \buildrel\hbox{$\theta_{_{\Psi, f}}$}\over
 {\hbox to 12mm {\rightarrowfill}} Z_2^{\ell_1}(G, \mathbb R) \longrightarrow \overline{H}_2^{\ell_1}(G, \mathbb R)$
 o\`u $\mathbf{m}_{_{2}} :
G^2\rightarrow Z_2^{\ell_1}(G, \mathbb R)$ d\'esigne la $2$-cocha\^\i ne
d\'efinie  par la formule (13). Nous montrerons dans la section suivante que la
classe de cohomologie induite par ce cocycle ne d\'epend pas du noyau abstrait $(\Psi, f)$.

D'abord, rappelons que puisque pour tout $2$-cobord born\'e $db\in B_b^2(G,
\mathbb R)$
 on a,
$$<db, \mathbf{m}_{_{2}}(g, h)>=0, \qquad \forall g, h\in G$$
(cf. aff. 5) il en r\'esulte que pour tout $2$-cocycle born\'e $c\in
Z_b^2(G, \mathbb R)$ la $3$-cocha\^\i ne born\'ee de composition
$\varphi_*(K_{_{x, \overline{\Psi}}})=c_{_{x, \overline{\Psi}}} :
\Pi^3\rightarrow \mathbb R$ d\'efinie par la formule  (20) peut \^etre
exprim\'ee par le crochet de dualit\'e,
\begin{eqnarray}c_{_{x, \overline{\Psi}}}(\alpha, \beta, \gamma)=<c, \theta_{_{\Psi, f}}(\alpha, \beta, \gamma)>=
<c_x, \theta_{_{\Psi, f}}(\alpha, \beta, \gamma)>
\end{eqnarray}
o\`u  $c_x$ d\'esigne l'unique $2$-cocycle born\'e homog\`ene qui repr\'esente
la classe de cohomologie born\'ee $x=[c]\in H_b^2(G, \mathbb R)$.

\begin{proposition}
La cocha\^\i ne born\'ee $\overline{\theta}_{_{\Psi, f}} : \Pi^3
\buildrel\hbox{$\theta_{_{\Psi, f}}$}\over {\hbox to 12mm {\rightarrowfill}}
Z_2^{\ell_1}(G, \mathbb R) \longrightarrow \overline{H}_2^{\ell_1}(G, \mathbb
R)$ qui est  induite en homologie $\ell_1$-r\'eduite par l'expression (23)
est un $3$-cocycle.
\end{proposition}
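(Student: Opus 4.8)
Le plan consiste à vérifier que le cobord de $\overline{\theta}_{\Psi,f}$ dans le complexe $C_b^*(\Pi,\overline{H}_2^{\ell_1}(G,\mathbb R))$ est nul, en testant chacune de ses valeurs contre toutes les classes $x\in H_b^2(G,\mathbb R)$ au moyen du crochet de dualité, puis en concluant par le théorème de Hahn--Banach exactement comme dans la preuve de l'affirmation 6.

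D'abord, on relèverait le cobord au niveau des $\ell_1$-cycles. La différentielle de $C_b^3(\Pi,\overline{H}_2^{\ell_1})$ faisant intervenir l'action de $\Pi$ induite par $\theta$, on représenterait le terme tordu $\alpha\cdot\overline{\theta}_{\Psi,f}(\beta,\gamma,\zeta)$ par le cycle $\Psi(\alpha)_{\#}(\theta_{\Psi,f}(\beta,\gamma,\zeta))$, où $\Psi(\alpha)_{\#}$ désigne l'opérateur induit sur les $\ell_1$-chaînes par l'automorphisme $\Psi(\alpha)$ (lequel préserve les cycles et, d'après la proposition 1, ne modifie la classe d'homologie réduite que via $\theta(\alpha)_*$). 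On obtient ainsi un représentant
\begin{eqnarray*}
\Theta(\alpha,\beta,\gamma,\zeta) &=& \Psi(\alpha)_{\#}(\theta_{\Psi,f}(\beta,\gamma,\zeta)) - \theta_{\Psi,f}(\alpha\beta,\gamma,\zeta) + \theta_{\Psi,f}(\alpha,\beta\gamma,\zeta) \\
&& {} - \theta_{\Psi,f}(\alpha,\beta,\gamma\zeta) + \theta_{\Psi,f}(\alpha,\beta,\gamma) \ \in\ Z_2^{\ell_1}(G,\mathbb R)
\end{eqnarray*}
du cobord de $\overline{\theta}_{\Psi,f}$ évalué en $(\alpha,\beta,\gamma,\zeta)$.

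Ensuite, pour toute classe $x\in H_b^2(G,\mathbb R)$ de représentant homogène $c_x$, on calculerait $\langle c_x,\Theta(\alpha,\beta,\gamma,\zeta)\rangle$. Le point décisif est le terme tordu : par adjonction des opérateurs induits en (co)homologie et par la $\Pi$-équivariance du crochet, on a $\langle c_x,\Psi(\alpha)_{\#}(z)\rangle=\langle \Psi(\alpha)^*(c_x),z\rangle$, et la troisième assertion de la proposition 5 fournit $\Psi(\alpha)^*(c_x)=c_{\Psi(\alpha)_b(x)}$. En appliquant alors la formule (24) terme à terme, on trouve
\begin{eqnarray*}
\langle c_x,\Theta(\alpha,\beta,\gamma,\zeta)\rangle &=& c_{\Psi(\alpha)_b(x),\overline{\Psi}}(\beta,\gamma,\zeta) - c_{x,\overline{\Psi}}(\alpha\beta,\gamma,\zeta) + c_{x,\overline{\Psi}}(\alpha,\beta\gamma,\zeta) \\
&& {} - c_{x,\overline{\Psi}}(\alpha,\beta,\gamma\zeta) + c_{x,\overline{\Psi}}(\alpha,\beta,\gamma).
\end{eqnarray*}
Or les quatre derniers termes valent $-c_{\Psi(\alpha)_b(x),\overline{\Psi}}(\beta,\gamma,\zeta)$ d'après la formule (22) du corollaire 4 (après simplification du terme commun $c_{x,\overline{\Psi}}(\beta,\gamma,\zeta)$), de sorte que $\langle c_x,\Theta(\alpha,\beta,\gamma,\zeta)\rangle=0$.

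Enfin, comme tout $2$-cocycle borné $c$ est cohomologue à un cocycle homogène $c_x$ et que les $2$-cobords ne contribuent pas au crochet avec les valeurs de $\mathbf{m}_{_{2}}$ (cf. aff. 5), on aurait $\langle c,\Theta(\alpha,\beta,\gamma,\zeta)\rangle=0$ pour tout $c\in Z_b^2(G,\mathbb R)$. Le théorème de séparation de Hahn--Banach, appliqué comme dans la preuve de l'affirmation 6, entraîne alors que $\Theta(\alpha,\beta,\gamma,\zeta)\in\overline{B_2^{\ell_1}(G,\mathbb R)}$, c'est-à-dire que le cobord de $\overline{\theta}_{\Psi,f}$ est nul dans $\overline{H}_2^{\ell_1}(G,\mathbb R)$. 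L'obstacle principal réside dans le traitement soigneux du terme tordu : il faut relever correctement l'action de $\Pi$ au niveau des $\ell_1$-cycles et identifier, via la troisième assertion de la proposition 5 et l'équivariance du crochet, le scalaire $\langle c_x,\Psi(\alpha)_{\#}(z)\rangle$ avec $c_{\Psi(\alpha)_b(x),\overline{\Psi}}$ ; une fois ce point établi, toute la combinatoire de cocycle est encapsulée dans la formule (22) et la conclusion est purement formelle.
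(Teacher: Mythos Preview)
Your proposal is correct and follows essentially the same route as the paper: you test the cobord of $\overline{\theta}_{\Psi,f}$ against an arbitrary homogeneous $2$-cocycle $c_x$ via the duality bracket, handle the twisted term through the adjunction $\langle c_x,\Psi(\alpha)_\#(z)\rangle=\langle \Psi(\alpha)^*(c_x),z\rangle$ together with proposition~5(3), reduce everything to formula~(22) of corollary~4, and conclude by Hahn--Banach as in affirmation~6. The only minor slip is the reference to ``proposition~1'' for the fact that the induced action on $\overline{H}_2^{\ell_1}$ depends only on $\theta(\alpha)$; that is proposition~2 (and its corollary~1).
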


\begin{proof}[D\'emonstration]  Notons que puisque la cocha\^\i ne
$\mathbf{m}_{_{2}} : G^2\rightarrow Z_2^{\ell_1}(G, \mathbb R)$ annule le
cobord des cocha\^\i nes  born\'ees  $b : G \rightarrow \mathbb R$ il en
r\'esulte imm\'ediatement que :
$$
<db, {\theta}_{_{\Psi, f}}>=0 \quad \textrm{ et que } \quad <db, d_{_{\Pi}}({\theta}_{_{\Psi,
f}})>=0
$$
o\`u $d_{_{\Pi}}$ d\'esigne la diff\'erentielle d\'efinie sur le complexe des
cocha\^\i nes non homog\`enes d\'efinies sur le groupe $\Pi$   \`a valeurs
dans le $\Pi$-module de Banach $\overline{H}_2^{\ell_1}(G, \mathbb R)$. Par
cons\'equent, pour d\'emontrer  que  la cocha\^\i ne
$\overline{\theta}_{_{\Psi, f}} : \Pi^3\rightarrow\overline{H}_2^{\ell_1}(G,
\mathbb R)$ est un $3$-cocycle,   il suffit de d\'emontrer que pour toute
classe de cohomologie born\'ee  $x=[c_x]\in H_b^2(G, \mathbb R)$ et pour tous
les \'el\'ements $\alpha$, $\beta$, $\gamma$ et  $\zeta \in \Pi$ le crochet de
dualit\'e,
$$<c_x, d_{_{\Pi}}({\theta}_{_{\Psi, f}})(\alpha, \beta,
\gamma, \zeta)>,$$ est nul.

En effet, par d\'efinition de la diff\'erentielle  $d_{_{\Pi}}$   on peut
\'ecrire  :
\begin{eqnarray}
<c_x, d_{_{\Pi}}(\theta_{_{\Psi, f}})(\alpha, \beta, \gamma, \zeta)> &=&
<c_x, \Psi(\alpha)_*(\theta_{_{\Psi, f}}(\beta, \gamma, \zeta))> -
<c_x, \theta_{_{\Psi, f}}(\alpha\beta, \gamma, \zeta)>\nonumber\\
&&+ <c_x, \theta_{_{\Psi, f}}(\alpha, \beta\gamma, \zeta)> - <c_x,
\theta_{_{\Psi, f}}(\alpha, \beta, \gamma\zeta)> \nonumber\\
&&+ <c_x, \theta_{_{\Psi, f}}(\alpha, \beta, \gamma)>\nonumber\\
&=& <\Psi(\alpha)^*(c_x), \theta_{_{\Psi, f}}(\beta, \gamma, \zeta))> -
c_{_{x, \overline{\Psi}}}(\alpha\beta, \gamma, \zeta)\nonumber\\
&&+ c_{_{x, \overline{\Psi}}}(\alpha, \beta\gamma, \zeta) -
c_{_{x, \overline{\Psi}}}(\alpha, \beta, \gamma\zeta) +
c_{_{x, \overline{\Psi}}}(\alpha, \beta, \gamma)\nonumber\\
&=&[<(\Psi(\alpha))^*(c_{x}), \theta_{_{\Psi, f}}(\beta, \gamma, \zeta)>-
c_{_{x, \overline{\Psi}}}(\beta, \gamma, \zeta)] \nonumber\\
&+& [ c_{_{x, \overline{\Psi}}}(\beta, \gamma, \zeta) -
c_{_{x, \overline{\Psi}}}(\alpha\beta, \gamma, \zeta)+
c_{_{x, \overline{\Psi}}}(\alpha, \beta\gamma, \zeta)\nonumber\\
&-& c_{_{x, \overline{\Psi}}}(\alpha, \beta, \gamma\zeta) +
c_{_{x, \overline{\Psi}}}(\alpha, \beta, \gamma)]\nonumber\\
&=&[<(\Psi(\alpha))^*(c_x), \theta_{_{\Psi, f}}(\beta, \gamma, \zeta)> -
c_{_{x, \overline{\Psi}}}(\beta, \gamma, \zeta)] \nonumber \\
&& +
d(c_{_{x, \overline{\Psi}}})(\alpha, \beta, \gamma, \zeta)\nonumber
\end{eqnarray}

Ainsi, si on applique la formule (22) du corollaire 4 qui nous donne :
$$d(c_{_{x, \overline{\Psi}}})(\alpha, \beta, \gamma, \zeta) =
c_{_{x, \overline{\Psi}}}(\beta, \gamma, \zeta)- c_{_{\Psi_b(\alpha)(x),
\overline{\Psi}}}(\beta, \gamma, \zeta), \forall x\in H_b^2(G, \mathbb R)$$ on
d\'eduit finalement que le crochet de dualit\'e $<c_x,
d_{_{\Pi}}(\theta_{_{\Psi, f}})(\alpha, \beta, \gamma, \zeta)>$ est nul et que
la $2$-cha\^\i ne $d_{_{\Pi}}(\theta_{_{\Psi, f}})(\alpha, \beta, \gamma,
\zeta) \in\overline{B_2^{\ell_1}(G, \mathbb R)}$. Donc, si on passe  dans le
$\Pi$-module de Banach d'homologie $\ell_1$-r\'eduite
$\overline{H}_2^{\ell_1}(G, \mathbb R)$ on obtient
$d_{_{\Pi}}(\overline{\theta}_{_{\Psi, f}})(\alpha, \beta, \gamma, \zeta)=0$.
\end{proof}

\subsection{La classe de cohomologie born\'ee $[\overline{\theta}_{_{\Psi, f}}]$
est bien d\'efinie}

Dans ce paragraphe, nous nous proposons de d\'emontrer que  la classe de
cohomologie $[\overline{\theta}_{_{\Psi, f}}]\in H_b^3(\Pi,
\overline{H}_2^{\ell_1}(G, \mathbb R))$ d\'efinie par l'expression (23) \`a
partir du noyau abstrait $(\Psi, f)$  ne d\'epend que de la repr\'esenta-tion
ext\'erieure $\theta : \Pi\rightarrow Out(G)$.

Pour cela, consid\'erons deux noyaux abstraits  $(\Psi, f)$ et $(\Psi', f')$
associ\'es \`a la repr\'esentation ext\'erieure $\theta : \Pi\rightarrow
Out(G)$. Observons que puisque pour tout $\alpha\in \Pi$ les automorphisemes
$\Psi(\alpha)$ et $\Psi'(\alpha)$ repr\'esentent le m\^eme automorphisme
ext\'erieur $\theta(\alpha)\in Out(G)$, il existe  une application $h :
\Pi\rightarrow G$ telle que pour tout $\alpha\in\Pi$, $\Psi'(\alpha) =
i_{_{h(\alpha)}}\circ\Psi(\alpha)$. Ainsi, par d\'efinition des d\'efauts $f$
et $f' : \Pi^2 \rightarrow G$  il existe une cocha\^\i ne ab\'elienne $z :
\Pi^2\rightarrow Z(G)$ qui les relient avec l'application $h : \Pi\rightarrow
G$ dans la relation :
\begin{eqnarray}
f'(\alpha, \beta)h(\alpha\beta)z(\alpha, \beta) = h(\alpha)
\Psi(\alpha)(h(\beta))f(\alpha, \beta),
\qquad \forall \alpha, \beta\in\Pi.
\end{eqnarray}

Rappelons que dans le lemme 4, \`a partir des donn\'ees pr\'ec\'edentes nous
avons construit des familles de bijections $\overline{\Psi} : \Pi\rightarrow
Sym(\overline{G}, \mathbb R)$ et $\overline{\Psi}' : \Pi\rightarrow
Sym(\overline{G}, \mathbb R)$ (cf. formule (16)) dont le d\'efaut  pour
qu'elles soient des homomorphismes est donn\'e respectivement par les
applications, $F_x=s_x\circ f$ et  $F_x'=s_x\circ f' : \Pi^2\rightarrow
\overline{G}$ (cf. l'assertion  4 du lemme 4).

Le lemme suivant joue un r\^ole crucial pour comparer les deux classes de
cohomologie born\'ee  $[\overline{\theta}_{{\Psi, f}}]$ et
$[\overline{\theta}_{{\Psi', f'}}]$ dans le groupe $H_b^3(\Pi,
\overline{H}_2^{\ell_1}(G, \mathbb R))$.

\begin{lemma} Avec les notations ci-dessus on a les propositions suivantes~:
\begin{enumerate}
\item Il existe une application $h_0 : \Pi\rightarrow\overline{G}$ telle
    que pour tout $\alpha\in \Pi$,
    $i_{_{h_0(\alpha)}}\circ\overline{\Psi}(\alpha) =
    \overline{\Psi}'(\alpha)$.
\item Il existe une cocha\^\i ne ab\'elienne $\overline{b} :
    \Pi^2\rightarrow Z(\overline{G})$ telle que pour tous $\alpha$ et
    $\beta\in\Pi$,
$$\overline{b}(\alpha, \beta)F_x'(\alpha, \beta)h_0(\alpha\beta)=
h_0(\alpha)
\overline{\Psi}(\alpha)(h_0(\beta))F_x(\alpha, \beta).$$
\item La cocha\^\i ne r\'eelle  $\varphi_*(\overline{b}) :
    \Pi^2\stackrel{\overline{b}}{\longrightarrow}
    Z(\overline{G})\stackrel{\varphi}{\longrightarrow}\mathbb R$ est
    \'egale au  crochet de dualit\'e $\varphi_*(\overline{b})=<c_x,
    \lambda>$ o\`u $\lambda : \Pi^2\rightarrow Z_2^{\ell_1}(G, \mathbb R)$
    est une cocha\^\i ne born\'ee qui associe \`a chaque couple $(\alpha,
    \beta)\in\Pi^2$ un $2$-cycle qui repr\'esente une classe d'homologie
    $\ell_1$-r\'eduite donn\'ee par l'expression :
\begin{eqnarray}
\overline{\lambda}(\alpha, \beta) &=& \overline{\mathbf{m}}_{_{2}}(h(\alpha), \Psi(\alpha)(h(\beta))) +
\overline{\mathbf{m}}_{_{2}}(\Psi(\alpha)(h(\beta)), f(\alpha, \beta)) \\
&& - \overline{\mathbf{m}}_{_{2}}(f'(\alpha, \beta), h(\alpha\beta))\in
\overline{H}_2^{\ell_1}(G, \mathbb R) \nonumber
\end{eqnarray}
et o\`u $\overline{\mathbf{m}}_{_{2}} : G^2 \rightarrow Z_2^{\ell_1}(G,
\mathbb R)$ d\'esigne la cocha\^\i ne d\'efinie par l'expression (13).
\end{enumerate}
\end{lemma}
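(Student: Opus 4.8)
The plan is to take for $h_0$ the most economical lift, namely $h_0:=s_x\circ h:\Pi\rightarrow\overline{G}$, and then to read off $\overline{b}$ and $\varphi_*(\overline{b})$ by telescoping products of $s_x$-values through the central extension. For assertion (1), I would verify directly from the defining formula (16) that $h_0=s_x\circ h$ works. Since $\Psi'(\alpha)=i_{h(\alpha)}\circ\Psi(\alpha)$, formula (16) gives $\overline{\Psi}'(\alpha)(\overline{g})=s_x\big(i_{h(\alpha)}(\Psi(\alpha)(p(\overline{g})))\big)\varphi(\overline{g})$; applying the fact that $s_x$ commutes with conjugation (aff. 4) turns $s_x\circ i_{h(\alpha)}$ into $i_{s_x(h(\alpha))}\circ s_x$, and because $\varphi(\overline{g})\in\mathbb{R}=\mathrm{Ker}(p)$ is central it may be pulled past the conjugation. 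Comparing with $i_{h_0(\alpha)}\circ\overline{\Psi}(\alpha)$ for $h_0(\alpha)=s_x(h(\alpha))$ yields exactly the identity of assertion (1).

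For assertion (2), I would solve the defining equation for $\overline{b}(\alpha,\beta)$. Writing $u=h(\alpha)$, $v=\Psi(\alpha)(h(\beta))$, $w=f(\alpha,\beta)$, and using that by (16) together with $\varphi\circ s_x=0$ one has $\overline{\Psi}(\alpha)(s_x(h(\beta)))=s_x(\Psi(\alpha)(h(\beta)))$, the right-hand side of the defining relation becomes the triple product $s_x(u)\,s_x(v)\,s_x(w)$. I telescope it with the bar-cocycle rule $s_x(g)s_x(h)=j(c_x(g,h))\,s_x(gh)$, which collapses it to $j\big(c_x(u,v)+c_x(uv,w)\big)\,s_x(uvw)$. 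Relation (26) identifies $uvw=f'(\alpha,\beta)\,h(\alpha\beta)\,z(\alpha,\beta)$ with $z(\alpha,\beta)\in Z(G)$; since $s_x|_{Z(G)}$ is a homomorphism into $Z(\overline{G})$ (aff. 4) and $\varphi\circ s_x=0$, we get $s_x(uvw)=s_x(f'(\alpha,\beta)h(\alpha\beta))\,s_x(z(\alpha,\beta))$, and one further application of the cocycle rule separates off $j\big(-c_x(f'(\alpha,\beta),h(\alpha\beta))\big)$. Comparing with the left-hand side $\overline{b}(\alpha,\beta)\,s_x(f'(\alpha,\beta))s_x(h(\alpha\beta))$ and cancelling the common factor leaves
$$\overline{b}(\alpha,\beta)=j\big(c_x(u,v)+c_x(uv,w)-c_x(f'(\alpha,\beta),h(\alpha\beta))\big)\,s_x(z(\alpha,\beta))\in Z(\overline{G})=\mathbb{R}\oplus Z(G),$$
which simultaneously exhibits $\overline{b}$ as a well-defined abelian cochain and makes it explicit.

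For assertion (3), I would apply $\varphi$ to this expression. As $\varphi$ is a homomorphism on $Z(\overline{G})$ with $\varphi|_{\mathbb{R}}=\mathrm{id}$ and $\varphi\circ s_x=0$, the factor $s_x(z(\alpha,\beta))$ is annihilated and $\varphi_*(\overline{b})(\alpha,\beta)=c_x(u,v)+c_x(uv,w)-c_x(f'(\alpha,\beta),h(\alpha\beta))$. I then use aff. 5 to rewrite each scalar $c_x(g,h)$ as the duality bracket $\langle c_x,\overline{\mathbf{m}}_{2}(g,h)\rangle$, and the $2$-cocycle relation for $\overline{\mathbf{m}}_{2}$ (aff. 6) to regroup the resulting class into the form $\overline{\lambda}(\alpha,\beta)$ of (27); bilinearity of the pairing then gives $\varphi_*(\overline{b})=\langle c_x,\lambda\rangle$. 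The main obstacle is precisely the central bookkeeping of step (2): one must track every factor $j(c_x(\cdot,\cdot))$ produced by telescoping the triple product and by substituting relation (26), and check that the $Z(G)$-contribution carried by $z(\alpha,\beta)$ is exactly the part killed by $\varphi$, so that no extraneous term survives in $\varphi_*(\overline{b})$ — and that the final regrouping via aff. 6 lands on the stated representative rather than a merely cohomologous one.
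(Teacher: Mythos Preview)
Your approach to (1) is exactly the paper's: set $h_0=s_x\circ h$ and use that $s_x$ commutes with conjugation. For (2) the paper argues more abstractly --- it composes $\overline{\Psi}'(\alpha)\circ\overline{\Psi}'(\beta)$ two ways, obtains an equality of inner automorphisms of $\overline{G}$, and concludes that the two conjugating elements differ by something central; this yields existence of $\overline{b}$ without computing it. Your direct telescoping of $s_x(u)s_x(v)s_x(w)$ is an equally valid route and has the advantage of producing the explicit form of $\overline{b}$ needed for (3) in the same breath. So through the formula
\[
\varphi_*(\overline{b})(\alpha,\beta)=c_x(u,v)+c_x(uv,w)-c_x\bigl(f'(\alpha,\beta),h(\alpha\beta)\bigr),
\qquad u=h(\alpha),\ v=\Psi(\alpha)(h(\beta)),\ w=f(\alpha,\beta),
\]
your computation and the paper's are in substance identical.

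The gap is in your last sentence. The cocycle relation of aff.~6 (for the \emph{trivial} $G$-action) reads
\[
\overline{\mathbf{m}}_{2}(v,w)-\overline{\mathbf{m}}_{2}(uv,w)+\overline{\mathbf{m}}_{2}(u,vw)-\overline{\mathbf{m}}_{2}(u,v)=0,
\]
so it turns $\overline{\mathbf{m}}_{2}(u,v)+\overline{\mathbf{m}}_{2}(uv,w)$ into $\overline{\mathbf{m}}_{2}(u,vw)+\overline{\mathbf{m}}_{2}(v,w)$, \emph{not} into $\overline{\mathbf{m}}_{2}(u,v)+\overline{\mathbf{m}}_{2}(v,w)$. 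Hence your ``regrouping'' cannot reach the stated representative $\overline{\lambda}$ with middle term $\overline{\mathbf{m}}_{2}\bigl(\Psi(\alpha)(h(\beta)),f(\alpha,\beta)\bigr)$; the two classes differ in $\overline{H}_2^{\ell_1}(G,\mathbb{R})$, and pairing with $c_x$ gives different real numbers in general. In fact the paper's own derivation reaches its formula through a slip at exactly this point: in step~(b) it passes from $s_x(uv)s_x(w)$ to $c_x(v,w)\,s_x(uvw)$, whereas the bar-cocycle rule gives $c_x(uv,w)\,s_x(uvw)$. Your telescoping is the correct one; the middle term of $\overline{\lambda}$ should read $\overline{\mathbf{m}}_{2}\bigl(h(\alpha)\Psi(\alpha)(h(\beta)),\,f(\alpha,\beta)\bigr)$. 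With that correction your argument goes through verbatim and no regrouping via aff.~6 is needed.
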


\begin{proof}[D\'emonstration] 1) Puisque pour tout $\alpha\in\Pi$ on sait que,
$\Psi'(\alpha)=i_{_{h(\alpha)}}\circ\Psi(\alpha)$, en posant
$h_0(\alpha)=s_x\circ h(\alpha)\in \overline{G}$ la formule $(16$) qui
d\'efinit la famille de bijections $\overline{\Psi}' : \Pi\rightarrow
Sym(\overline{G},\mathbb R)$ permet de voir que  pour tout \'el\'ement
$\overline{g}\in\overline{G}$ on a
\begin{eqnarray}
\overline{\Psi}'(\alpha)(\overline{g}) &=&
s_x(\Psi'(\alpha)(p(\overline{g}))\varphi(\overline{g})\nonumber \\
&=& s_x(i_{_{h(\alpha)}}\circ\Psi(\alpha)(p(\overline{g}))
\varphi(\overline{g})\nonumber\\
&=&s_x\circ p(i_{_{h_0(\alpha)}}\circ\overline{\Psi}(\alpha)(\overline{g}))
\varphi(\overline{g})\nonumber
\end{eqnarray}

En remarquant que l'application $s_x\circ p : \overline{G}
\rightarrow\overline{G}$ commute avec la conjugaison dans le groupe
$\overline{G}$ (cf. aff. 4), on  d\'eduit que $\overline{\Psi}'(\alpha)=i_{_{h_0(\alpha)}}
\circ\overline{\Psi}(\alpha)$.

\noindent 2) Observons que si pour tous les \'el\'ements $\alpha$ et $\beta$ du
groupe $\Pi$ on compose l'expression
$\overline{\Psi}'(\alpha)=i_{_{h_0(\alpha)}} \circ\overline{\Psi}(\alpha)$ avec
$\overline{\Psi}'(\beta)=i_{_{h_0(\beta)}} \circ\overline{\Psi}(\beta)$, on
obtient  par application de la formule (4) du lemme 4 qu'on a
$$\overline{\Psi}'(\alpha)\circ \overline{\Psi}'(\beta)
=i_{_{h_0(\alpha)\overline{\Psi}(\alpha)(h_0(\beta))F_x(\alpha, \beta)}}\circ
\overline{\Psi}(\alpha\beta)$$

D'autre part, puisque on a $\overline{\Psi}'(\alpha)\circ
\overline{\Psi}'(\beta)= i_{_{F_x'(\alpha,
\beta)}}\circ\overline{\Psi}'(\alpha\beta)= i_{_{F_x'(\alpha,
\beta)h_0(\alpha\beta)}}\circ\overline{\Psi}(\alpha\beta)$ on en d\'eduit
l'\'egalit\'e des automorphismes int\'erieurs :
$$\displaystyle{i_{_{h_0(\alpha)\overline{\Psi}(\alpha)(h_0(\beta))F_x(\alpha,
\beta)}} = i_{_{F_x'(\alpha, \beta)h_0(\alpha\beta)}}}$$

Un  automorphisme int\'erieur n'\'etant  d\'efini qu'\`a un \'el\'ement du
centre  pr\`es, il existe donc une cocha\^\i ne ab\'elienne $\overline{b} :
\Pi^2\rightarrow Z(\overline G)$ qui r\'ealise la relation recherch\'ee,
$$\overline{b}(\alpha, \beta)F_x'(\alpha, \beta)h_0(\alpha\beta)=
h_0(\alpha)
\overline{\Psi}(\alpha)(h_0(\beta))F_x(\alpha, \beta), \qquad \forall \alpha, \beta\in\Pi.$$

\noindent 3)  Pour pouvoir \'ecrire la $2$-cocha\^\i ne r\'eelle
$\varphi_*(\overline{b}) : \Pi^2\rightarrow \mathbb R$ au moyen d'un crochet de
dualit\'e,  nous allons d\'evelopper dans le groupe $\overline G$ les deux
membres de la relation \'etablie dans 2) comme suit.

a) Premi\`erement, notons que puisque le $2$-cocycle $c_x : G^2\rightarrow
\mathbb R$ est \'egal au d\'efaut de la section $s_x : G\rightarrow
\overline{G}$ on peut  \'ecrire,
\begin{eqnarray}
\overline{b}(\alpha, \beta)F_x'(\alpha, \beta)h_0(\alpha\beta)&=&
\overline{b}(\alpha, \beta)s_x\circ f'(\alpha, \beta)s_x\circ h(\alpha\beta)
\nonumber \\
&=&
\overline{b}(\alpha, \beta)c_x(f'(\alpha, \beta), h(\alpha\beta))s_x(
f'(\alpha, \beta)h(\alpha\beta))\nonumber
\end{eqnarray}

b) D'autre part, si l'on exprime  la bijection $\overline{\Psi}(\alpha)$ \`a
l'aide de (16)  et si on utilise la propri\'et\'e  $\varphi\circ s_x=0$ on
obtient :
\begin{eqnarray}
h_0(\alpha)\overline{\Psi}(\alpha)(h_0(\beta))F_x(\alpha, \beta)&=&
s_x\circ h(\alpha)s_x(\Psi(\alpha)(h(\beta)))s_x\circ f(\alpha,
\beta)j\circ\varphi\circ s_x(h(\beta))\nonumber \\
&=& c_x(h(\alpha), \Psi(\alpha)(h(\beta)))
s_x(h(\alpha)\Psi(\alpha)(h(\beta)))s_x(f(\alpha, \beta))\nonumber\\
&=&c_x(h(\alpha), \Psi(\alpha)(h(\beta)))c_x(\Psi(\alpha)(h(\beta)),
f(\alpha, \beta))\nonumber\\
&& s_x(h(\alpha)\Psi(\alpha)(h(\beta))f(\alpha, \beta))\nonumber
\end{eqnarray}

Ainsi, si maintenant on applique l'expression (25) qui relie les d\'efauts
respectifs $f$ et $f'$ des rel\`evements ensemblistes $\Psi$ et $\Psi'$ de la
repr\'esentation ext\'erieure  $\theta : \Pi\rightarrow Out(G)$ i.e.,
$$f'(\alpha, \beta)h(\alpha\beta)z(\alpha, \beta) = h(\alpha)
\Psi(\alpha)(h(\beta))f(\alpha, \beta) \quad \textrm{ o\`u } \quad z(\alpha, \beta)\in Z(G),
\quad \forall \alpha, \beta\in\Pi$$
 on pourra alors r\'e\'ecrire l'\'el\'ement $h_0(\alpha)\overline{\Psi}(\alpha)(h_0(\beta))F_x(\alpha, \beta)$ sous
la forme suivante :
\begin{eqnarray}
h_0(\alpha)\overline{\Psi}(\alpha)(h_0(\beta))F_x(\alpha,
\beta)&=& c_x(h(\alpha), \Psi(\alpha)(h(\beta)))c_x(\Psi(\alpha)(h(\beta)),
f(\alpha, \beta))\nonumber\\
&&s_x(f'(\alpha, \beta)h(\alpha\beta)z(\alpha, \beta))\nonumber
\end{eqnarray}

c) Et, puisque pour tous $g\in G$ et $z\in Z(G)$ on sait que
$s_x(gz)=s_x(g)\cdot s_x(z)\in \overline{G}$ (cf. aff. 4) on d\'eduit  de ce qui
pr\'ec\`ede que
\begin{eqnarray}
\overline{b}(\alpha, \beta) &=&c_x(h(\alpha), \Psi(\alpha)(h(\beta))) +
c_x(\Psi(\alpha)(h(\beta)), f(\alpha, \beta)) -
c_x(f'(\alpha, \beta), h(\alpha\beta)) \nonumber \\
&+& s_x(z(\alpha, \beta))\nonumber \\
&=& <c_x, \lambda(\alpha, \beta)> + s_x(z(\alpha, \beta))\nonumber
\end{eqnarray}
o\`u $\lambda(\alpha, \beta) = {\mathbf{m}}_{_{2}}(h(\alpha),
\Psi(\alpha)(h(\beta))) + {\mathbf{m}}_{_{2}}(\Psi(\alpha)(h(\beta)), f(\alpha,
\beta)) - {\mathbf{m}}_{_{2}}(f'(\alpha, \beta), h(\alpha\beta))$.

Enfin, si on applique le quasi-morphisme homog\`ene $\varphi :
\overline{G}\rightarrow \mathbb R$ sur la derni\`ere expression de
l'\'el\'ement $\overline{b}(\alpha, \beta)\in Z(\overline{G})$ on obtient
$\varphi_*(\overline{b})(\alpha, \beta) = <c_x, \lambda(\alpha, \beta)>$ car
$\varphi\circ s_x=0$ et pour tout $t\in \mathbb R, \varphi(t)=t$.
\end{proof}

Pour finir cette section nous allons d\'emontrer la proposition suivante qui
affirme que  la classe de cohomologie born\'ee $[\theta_{_{\Psi, f}}]\in
H_b^3(\Pi, \overline{H}_2^{\ell_1}(G, \mathbb R))$ ne d\'epend pas  du noyau
abstrait choisi $(\Psi, f)$.

\begin{proposition}
Le cobord de la cocha\^\i ne born\'ee $\overline{\lambda} : \Pi^2\rightarrow
\overline{H}_2^{\ell_1}(G, \mathbb R)$ qui est  d\'efinie par l'expression
(26) est  \'egal \`a   $d\overline{\lambda}=\overline{\theta}_{_{\Psi', f'}}
- \overline{\theta}_{_{\Psi, f}}$.

En cons\'equence, la classe de cohomologie born\'ee $[\theta_{_{\Psi, f}}]\in
H_b^3(\Pi, \overline{H}_2^{\ell_1}(G, \mathbb R))$ ne d\'epend que de la
repr\'esentation ext\'erieure $\theta : \Pi \rightarrow Out(G)$.
\end{proposition}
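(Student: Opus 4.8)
Le plan est de se ramener, par un argument de s\'eparation de Hahn-Banach identique \`a celui de l'affirmation 6 et de la proposition 9, \`a une identit\'e \`a valeurs r\'eelles, puis de relever le calcul dans le groupe $Z(\overline G)$ en imitant la preuve de la proposition 8. Fixons un triplet $(\alpha,\beta,\gamma)\in\Pi^3$. L'\'el\'ement $(\overline{\theta}_{_{\Psi',f'}}-\overline{\theta}_{_{\Psi,f}}-d\overline{\lambda})(\alpha,\beta,\gamma)$ appartient \`a l'espace de Banach r\'eduit $\overline{H}_2^{\ell_1}(G,\mathbb R)$, dont les points sont s\'epar\'es par le crochet de dualit\'e avec $H_b^2(G,\mathbb R)$. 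Il suffit donc, pour conclure, de v\'erifier pour toute classe $x\in H_b^2(G,\mathbb R)$ de repr\'esentant born\'e homog\`ene $c_x$ l'\'egalit\'e des accouplements. Par l'\'equation (24), le membre de droite s'accouple sur $c_{_{x,\overline{\Psi}'}}(\alpha,\beta,\gamma)-c_{_{x,\overline{\Psi}}}(\alpha,\beta,\gamma)$. En d\'eveloppant la diff\'erentielle $d_{_{\Pi}}$ du complexe \`a coefficients dans le $\Pi$-module $\overline{H}_2^{\ell_1}(G,\mathbb R)$, puis en utilisant l'\'equivariance du crochet (cf. 3.3) et la proposition 5, le membre de gauche fait appara\^\i tre le terme tordu $\langle\Psi(\alpha)^*(c_x),\overline{\lambda}(\beta,\gamma)\rangle$ \`a c\^ot\'e des termes non tordus $\langle c_x,\overline{\lambda}(\cdot,\cdot)\rangle$.

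Gr\^ace au lemme 5 j'identifie ensuite $\langle c_x,\overline{\lambda}(\alpha,\beta)\rangle=\varphi_*(\overline b)(\alpha,\beta)$, o\`u $\overline b:\Pi^2\rightarrow Z(\overline G)$ est la $2$-cocha\^\i ne ab\'elienne d\'efinie par la relation de d\'efinition du lemme 5(2). Le point central consiste alors \`a calculer le cobord de $\overline b$ relativement \`a la structure de $\Pi$-module sur $Z(\overline G)$ induite par $\overline{\Psi}$, en d\'eveloppant l'\'el\'ement $\overline{\Psi}(\alpha)(\overline b(\beta,\gamma))\,\overline b(\alpha\beta,\gamma)^{-1}\overline b(\alpha,\beta\gamma)\,\overline b(\alpha,\beta)^{-1}$ exactement comme dans la preuve de la proposition 8 : on emploie la relation du lemme 5(2), la loi de composition $\overline{\Psi}(\alpha)\circ\overline{\Psi}(\beta)=i_{_{F_x(\alpha,\beta)}}\circ\overline{\Psi}(\alpha\beta)$ du lemme 4, la formule de d\'eviation (17) de la proposition 6 et la relation (25) reliant $f$ et $f'$. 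On obtient ainsi une expression de ce cobord faisant intervenir les cocha\^\i nes $K_{_{x,\overline{\Psi}}}$ et $K_{_{x,\overline{\Psi}'}}$ ainsi que les facteurs centraux de d\'eviation comparant $c_x$ et son tordu $\Psi(\alpha)^*(c_x)$.

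Enfin, j'applique le quasi-morphisme homog\`ene $\varphi$, qui est un homomorphisme $Z(\overline G)\rightarrow\mathbb R$ v\'erifiant $\varphi\circ\overline{\Psi}(\alpha)=\varphi$, $\varphi\circ s_x=0$ et $\varphi(t)=t$ pour $t\in\mathbb R$ (lemme 4 et affirmation 4). La projection par $\varphi_*$ transforme les facteurs $K_{_{x,\overline{\Psi}}}$ et $K_{_{x,\overline{\Psi}'}}$ en les cocha\^\i nes de composition $c_{_{x,\overline{\Psi}}}$ et $c_{_{x,\overline{\Psi}'}}$, tandis que les facteurs de d\'eviation produisent pr\'ecis\'ement, au signe pr\`es, la diff\'erence $\langle\Psi(\alpha)^*(c_x),\overline{\lambda}(\beta,\gamma)\rangle-\langle c_x,\overline{\lambda}(\beta,\gamma)\rangle$ : c'est l\`a l'analogue, pour la $2$-cocha\^\i ne $\overline b$, de la formule (21) et du corollaire 4. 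En combinant avec les termes non tordus, on lit directement l'\'egalit\'e cherch\'ee des accouplements, d'o\`u $d\overline{\lambda}=\overline{\theta}_{_{\Psi',f'}}-\overline{\theta}_{_{\Psi,f}}$. Comme les deux $3$-cocha\^\i nes $\overline{\theta}_{_{\Psi,f}}$ et $\overline{\theta}_{_{\Psi',f'}}$ sont des $3$-cocycles (proposition 9) et qu'elles diff\`erent d'un cobord, elles sont cohomologues ; les noyaux abstraits $(\Psi,f)$ et $(\Psi',f')$ de $\theta$ \'etant arbitraires, la classe $[\overline{\theta}_{_{\Psi,f}}]\in H_b^3(\Pi,\overline{H}_2^{\ell_1}(G,\mathbb R))$ ne d\'epend que de la repr\'esentation ext\'erieure $\theta$.

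L'obstacle principal sera le calcul de relev\'e dans $Z(\overline G)$ du deuxi\`eme paragraphe. Il faut y suivre avec soin les facteurs centraux de d\'eviation engendr\'es par la formule (17), c'est-\`a-dire l'\'ecart entre $c_x$ et $\Psi(\alpha)^*(c_x)$ : c'est exactement cette contribution, li\'ee \`a la non-$\Pi$-invariance de $x$, qui compense le terme d'action de la diff\'erentielle $d_{_{\Pi}}$ et rend l'identit\'e compatible avec la structure de $\Pi$-module non triviale de $\overline{H}_2^{\ell_1}(G,\mathbb R)$ ; sans cette compensation, la seule \'egalit\'e $\varphi_*(d\overline b)=c_{_{x,\overline{\Psi}'}}-c_{_{x,\overline{\Psi}}}$ ne suffirait pas \`a conclure.
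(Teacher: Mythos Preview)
Your plan is correct and coincides with the paper's own argument: reduce to the real-valued identity $\langle c_x,\theta_{\Psi,f}-\theta_{\Psi',f'}\rangle=\langle c_x,d\lambda\rangle$ via Hahn--Banach, lift the computation to $Z(\overline G)$ through the cochain $\overline b$ of Lemma~5, and then project by $\varphi$. The paper organizes the $Z(\overline G)$-computation slightly differently --- instead of directly expanding the coboundary of $\overline b$, it develops the single element
\[
L'=h_0(\alpha)\,\overline{\Psi}(\alpha)\bigl[h_0(\beta)\,\overline{\Psi}(\beta)(h_0(\gamma))\,F_x(\beta,\gamma)\bigr]\,F_x(\alpha,\beta\gamma)
\]
in two ways (once towards $K_{x,\overline\Psi}$, once towards $K_{x,\overline\Psi'}$) and reads off exactly the identity you describe, with the same deviation terms $\langle\Psi(\alpha)^*(c_x)-c_x,\lambda(\beta,\gamma)\rangle$ compensating the twist in $d_\Pi$. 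Your formulation and the paper's are the same calculation packaged differently; neither presents a genuine shortcut over the other.
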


\begin{proof}[D\'emonstration] Pour d\'emontrer que le cobord
$d\overline{\lambda} : \Pi^3\rightarrow \overline{H}_2^{\ell_1}(G, \mathbb R)$
est \'egal \`a la diff\'erence $\overline{\theta}_{_{\Psi', f'}} -
\overline{\theta}_{_{\Psi, f}}$ nous d\'evelopperons ci-dessous le produit
$$L'=
h_0(\alpha)\overline{\Psi}(\alpha)[h_0(\beta)
\overline{\Psi}(\beta)(h_0(\gamma))F_x(\beta, \gamma)]F_x(\alpha, \beta\gamma)
$$ dans  le groupe $\overline{G}$ de deux fa\c cons.

\noindent 1) Dans le premier d\'eveloppement de l'\'el\'ement $L'\in
\overline{G}$, nous allons  utiliser l'expression
$\overline{\Psi}(\alpha)\circ\overline{\Psi}(\beta)= i_{_{F_x(\alpha,
\beta)}}\circ\overline{\Psi}(\alpha\beta)$ (cf.  lemme 4) et la formule (17)
de la proposition 6 qui contr\^ole la d\'eviation de la bijection
$\overline{\Psi}(\alpha) : \overline{G}\rightarrow \overline{G}$ \`a \^etre un
homomorphisme.

\begin{eqnarray}
L'&=&h_0(\alpha)\overline{\Psi}(\alpha)[h_0(\beta)
\overline{\Psi}(\beta)(h_0(\gamma))F_x(\beta, \gamma)]F_x(\alpha, \beta\gamma)\nonumber\\
&=& h_0(\alpha)\overline{\Psi}(\alpha)(h_0(\beta))
[\overline{\Psi}(\alpha)(\overline{\Psi}(\beta)(h_0(\gamma))
F_x(\beta, \gamma))]F_x(\alpha, \beta\gamma)\nonumber\\
&&[c_x(h(\alpha), \Psi(\beta)(h(\gamma))f(\beta, \gamma))
(\Psi(\alpha))^*(c_x)(h(\beta), \Psi(\beta)(h(\gamma))f(\beta, \gamma))^{-1}]
\nonumber\\
&=& h_0(\alpha)\overline{\Psi}(\alpha)(h_0(\beta))[
\overline{\Psi}(\alpha)\circ\overline{\Psi}(\beta)(h_0(\gamma))
\overline{\Psi}(\alpha)(F_x(\beta, \gamma))]F_x(\alpha, \beta\gamma)
\nonumber\\
&&[c_x(\Psi(\beta)(h(\gamma)), f(\beta, \gamma))(\Psi(\alpha))^*(c_x)(
\Psi(\beta)(h(\gamma)), f(\beta, \gamma))^{-1}]\nonumber\\
&&[c_x(h(\alpha), \Psi(\beta)(h(\gamma))f(\beta, \gamma))
(\Psi(\alpha))^*(c_x)(h(\beta), \Psi(\beta)(h(\gamma))f(\beta,
\gamma))^{-1}]\nonumber\\
&=&[h_0(\alpha)\overline{\Psi}(\alpha)(h_0(\beta))F_x(\alpha, \beta)][
\overline{\Psi}(\alpha\beta)(h_0(\gamma))F_x(\alpha, \beta)^{-1}]\nonumber\\
&&[\overline{\Psi}(\alpha)(F_x(\beta, \gamma))]F_x(\alpha, \beta\gamma)]\nonumber\\
&&[c_x(\Psi(\beta)(h(\gamma)), f(\beta, \gamma))(\Psi(\alpha))^*(c_x)(
\Psi(\beta)(h(\gamma)), f(\beta, \gamma))^{-1}]\nonumber\\
&&[c_x(h(\alpha), \Psi(\beta)(h(\gamma))f(\beta, \gamma))
(\Psi(\alpha))^*(c_x)(h(\beta), \Psi(\beta)(h(\gamma))f(\beta,
\gamma))^{-1}]\nonumber
\end{eqnarray}

Pour poursuivre ce d\'eveloppement de l'\'el\'ement $L'\in\overline{G}$ nous
allons utiliser  l'expression de la cocha\^\i ne ab\'elienne $\overline{b} :
\Pi^2\rightarrow Z(\overline G)$ du lemme 5 et  l'expression (18) i.e.
$$K_{_{x, \overline{\Psi}}}(\alpha, \beta, \gamma) =
{\overline \Psi}(\alpha)(F_x(\beta, \gamma))F_x(\alpha, \beta\gamma)
F_x(\alpha\beta, \gamma)^{-1}F_x(\alpha, \beta)^{-1}$$ associ\'ee au noyau
abstrait $(\Psi, f)$.

\begin{eqnarray}
L'&=&[\overline{b}(\alpha, \beta)F_x'(\alpha, \beta)h_0(\alpha\beta)][
\overline{\Psi}(\alpha\beta)(h_0(\gamma))F_x(\alpha, \beta)^{-1}]\nonumber\\
&&[K_{_{x, \overline{\Psi}}}(\alpha, \beta, \gamma)
F_x(\alpha, \beta)F_x(\alpha\beta, \gamma)]
\nonumber\\
&&[c_x(\Psi(\beta)(h(\gamma)), f(\beta, \gamma))(\Psi(\alpha))^*(c_x)(
\Psi(\beta)(h(\gamma)), f(\beta, \gamma))^{-1}]\nonumber\\
&&[c_x(h(\alpha), \Psi(\beta)(h(\gamma))f(\beta, \gamma))
(\Psi(\alpha))^*(c_x)(h(\beta), \Psi(\beta)(h(\gamma))f(\beta,
\gamma))^{-1}]\nonumber\\
&=&[\overline{b}(\alpha, \beta)K_{_{x, \overline{\Psi}}}(\alpha, \beta, \gamma)
]F_x'(\alpha, \beta)[h_0(\alpha\beta)\overline{\Psi}(\alpha\beta)(h_0(\gamma))F_x(\alpha\beta,
\gamma)]\nonumber\\
&&[c_x(\Psi(\beta)(h(\gamma)), f(\beta, \gamma))(\Psi(\alpha))^*(c_x)(
\Psi(\beta)(h(\gamma)), f(\beta, \gamma))^{-1}]\nonumber\\
&&[c_x(h(\alpha), \Psi(\beta)(h(\gamma))f(\beta, \gamma))
(\Psi(\alpha))^*(c_x)(h(\beta), \Psi(\beta)(h(\gamma))f(\beta,
\gamma))^{-1}]\nonumber\\
&=&[\overline{b}(\alpha, \beta)\overline{b}(\alpha\beta, \gamma)
K_{_{x, \overline{\Psi}}}(\alpha, \beta, \gamma)]
[F_x'(\alpha, \beta)F_x'(\alpha\beta, \gamma)h_0(\alpha\beta\gamma)]
\nonumber\\
&&[c_x(\Psi(\beta)(h(\gamma)), f(\beta, \gamma))(\Psi(\alpha))^*(c_x)(
\Psi(\beta)(h(\gamma)), f(\beta, \gamma))^{-1}]\nonumber\\
&&[c_x(h(\alpha), \Psi(\beta)(h(\gamma))f(\beta, \gamma))
(\Psi(\alpha))^*(c_x)(h(\beta), \Psi(\beta)(h(\gamma))f(\beta,
\gamma))^{-1}]\nonumber
\end{eqnarray}

\noindent 2) Dans le deuxi\`eme d\'eveloppement de l'\'el\'ement
$L'\in\overline{G}$, nous allons utiliser la formule (18) qui donne
l'expression des cocha\^\i nes  $K_{_{x, \overline{\Psi}}}$ et $K_{_{x,
\overline{\Psi'}}} : \Pi^3\rightarrow Z(\overline{G})$ et le fait que la
bijection $\overline{\Psi}(\alpha) : \overline{G}\rightarrow \overline{G}$
v\'erifie la relation $\overline{\Psi}(\alpha)(\overline{g}\ \overline{z}) =
\overline{\Psi}(\alpha)(\overline{g})\overline{\Psi}(\alpha)(\overline{z})$
pour tous $\overline{g}\in\overline{G}$ et $\overline{z}\in Z(\overline{G})$
(cf. lemme 4). De m\^eme, nous allons utiliser l'existence de $h_0 :
\Pi\rightarrow \overline{G}$ tel que pour tout $\alpha\in \Pi$,
$\overline{\Psi}'(\alpha)=i_{_{h_0(\alpha)}} \circ\overline{\Psi}(\alpha)$
prouv\'ee dans le lemme 5.

\begin{eqnarray}
L'&=&h_0(\alpha)\overline{\Psi}(\alpha)[h_0(\beta)
\overline{\Psi}(\beta)(h_0(\gamma))F_x(\beta, \gamma)]
F_x(\alpha, \beta\gamma)\nonumber\\
&=&h_0(\alpha)\overline{\Psi}(\alpha)[\overline{b}(\beta, \gamma)F_x'(\beta,
\gamma)h_0(\beta\gamma)]F_x(\alpha, \beta\gamma)\nonumber\\
&=&\overline{\Psi}(\alpha)(\overline{b}(\beta, \gamma))
[h_0(\alpha)\overline{\Psi}(\alpha)(F_x'(\beta, \gamma)
h_0(\beta\gamma)]F_x(\alpha, \beta\gamma)\nonumber\\
&=&\overline{\Psi}(\alpha)(\overline{b}(\beta, \gamma))
[h_0(\alpha)\overline{\Psi}(\alpha)(F_x'(\beta, \gamma))
\overline{\Psi}(\alpha)(h_0(\beta\gamma))F_x(\alpha,
\beta\gamma)\nonumber\\
&&[c_x(f'(\beta, \gamma), h(\beta\gamma)))(\Psi(\alpha))^*(c_x)(
f'(\beta, \gamma), h(\beta\gamma)))^{-1}]\nonumber\\
&=&\overline{\Psi}(\alpha)(\overline{b}(\beta, \gamma))
[h_0(\alpha)\overline{\Psi}(\alpha)(F_x'(\beta, \gamma))h_0(\alpha)^{-1}]
[h_0(\alpha)\overline{\Psi}(\alpha)(h_0(\beta\gamma))F_x(\alpha, \beta\gamma)]
\nonumber\\
&&[c_x(f'(\beta, \gamma), h(\beta\gamma)))(\Psi(\alpha))^*(c_x)(
f'(\beta, \gamma), h(\beta\gamma)))^{-1}]\nonumber\\
&=&\overline{\Psi}(\alpha)(\overline{b}(\beta, \gamma))
[\overline{\Psi}'(\alpha)(F_x'(\beta, \gamma))]
[\overline{b}(\alpha, \beta\gamma)
F_x'(\alpha, \beta\gamma)h_0(\alpha\beta\gamma)]\nonumber\\
&&[c_x(f'(\beta, \gamma), h(\beta\gamma)))(\Psi(\alpha))^*(c_x)(
f'(\beta, \gamma), h(\beta\gamma)))^{-1}]\nonumber\\
&=&\overline{\Psi}(\alpha)(\overline{b}(\beta, \gamma))
\overline{b}(\alpha, \beta\gamma)
[K_{_{x, \overline{\Psi}'}}(\alpha, \beta, \gamma)
F_x'(\alpha, \beta)F_x'(\alpha\beta, \gamma)]h_0(\alpha\beta\gamma)\nonumber\\
&&[c_x(f'(\beta, \gamma), h(\beta\gamma)))(\Psi(\alpha))^*(c_x)(
f'(\beta, \gamma), h(\beta\gamma)))^{-1}]\nonumber
\end{eqnarray}

\noindent 3) Si on  simplifie par  $F_x'(\alpha, \beta)F_x'(\alpha\beta,
\gamma)h_0(\alpha\beta\gamma)\in\overline{G}$ qui appara\^\i t dans les deux
d\'eveloppements pr\'ec\'edents  de  $L' \in\overline{G}$ on pourra  \'ecrire
dans le sous-groupe ab\'elien additif $(Z(\overline{G}), +)$  que :
\begin{eqnarray}
K_{_{x, \overline{\Psi}}}(\alpha, \beta, \gamma)&=&
K_{_{x, \overline{\Psi}'}}(\alpha, \beta, \gamma) +
\overline{\Psi}(\alpha)(\overline{b}(\beta, \gamma))-
\overline{b}(\alpha\beta, \gamma) + \overline{b}(\alpha, \beta\gamma)
-\overline{b}(\alpha, \beta)\nonumber\\
&+& c_x(f'(\beta, \gamma), h(\beta\gamma))-
(\Psi(\alpha))_*(c_x)(f'(\beta, \gamma), h(\beta\gamma))\nonumber\\
&-& c_x(\Psi(\beta)(h(\gamma)), f(\beta, \gamma)) +
(\Psi(\alpha))_*(c_x)(\Psi(\beta)(h(\gamma)), f(\beta, \gamma))\nonumber\\
& -& c_x(h(\beta), \Psi(\beta)(h(\gamma))f(\beta, \gamma)) +
(\Psi(\alpha))_*(c_x)(h(\beta), \Psi(\beta)(h(\gamma))f(\beta,\gamma))
\nonumber \\
&=& K_{_{x, \overline{\Psi}'}}(\alpha, \beta, \gamma)
+ \overline{\Psi}(\alpha)(\overline{b}(\beta, \gamma))-
\overline{b}(\alpha\beta, \gamma) + \overline{b}(\alpha, \beta\gamma)
-\overline{b}(\alpha, \beta)\nonumber\\
&-&<c_x, \lambda(\beta, \gamma)> + <(\Psi(\alpha))^*(c_x), \lambda(\beta, \gamma)>
\nonumber
\end{eqnarray}
o\`u $\lambda : \Pi^2\rightarrow Z(\overline G)$ d\'esigne la cocha\^\i ne
d\'efinie dans le lemme 5 par, $\varphi_*(\overline b)=<c_x, \lambda>$.

Enfin,  puisque le quasi-morphisme homog\`ene $\varphi :
\overline{G}\rightarrow \mathbb R$  est $\overline{\Psi}$-invariant (cf. lemme
4),
$$\varphi(\overline{\Psi}(\alpha)(\overline{g}))=\varphi(\overline{g}), \quad \forall \alpha\in\Pi, \overline{g}\in
 \overline{G}$$
 et que pour tous $\alpha, \beta$ et $\gamma\in\Pi$ on a $\varphi(K_{_{x, \overline{\Psi}}}(\alpha,
\beta,\gamma))=<c_x,\theta_{_{\psi, f}}(\alpha, \beta, \gamma)>$ on obtient,
\begin{eqnarray}
<c_x,\theta_{_{\psi, f}}(\alpha, \beta, \gamma) - \theta_{_{\Psi',
f'}}(\alpha, \beta, \gamma)>&=&
\varphi(K_{_{x, \overline{\Psi}}}(\alpha,\beta,\gamma))-\varphi(K_{_{x, \overline{\Psi}'}}(\alpha,
\beta,\gamma)) \nonumber \\
&=&<c_x, \lambda(\beta, \gamma)>-
<c_x, \lambda(\alpha\beta, \gamma)> \nonumber \\
&+& <c_x, \lambda(\alpha, \beta\gamma)>
-<c_x, \lambda(\alpha, \beta)>\nonumber\\
& -&   <c_x, \lambda(\beta, \gamma)> +
<c_x, (\Psi(\alpha))_*(\lambda)(\beta, \gamma)>
\nonumber\\
&=& -<c_x, \lambda(\alpha\beta, \gamma)> +
<c_x, \lambda(\alpha, \beta\gamma)> \nonumber\\
&-&<c_x, \lambda(\alpha, \beta)>
 + <c_x, (\Psi(\alpha))_*(\lambda)(\beta, \gamma)>\nonumber\\
&=& <c_x, d\lambda(\alpha, \beta, \gamma)>.\nonumber
\end{eqnarray}

Ainsi,  en passant  dans l'espace d'homologie $\ell_1$-r\'eduite
$\overline{H}_2^{\ell_1}(G, \mathbb R)$ on d\'eduit de ce qui pr\'ec\`ede qu'on
a finalement $\overline{\theta}_{_{\Psi, f}}- \overline{\theta}_{_{\Psi', f'}}
= d\overline{\lambda}$.\end{proof}

\section{Expression de la diff\'erentielle $d_3$}

Consid\'erons  une extension de groupes discrets $1 \longrightarrow
G\stackrel{i}{\longrightarrow}\Gamma\stackrel{\sigma}{\longrightarrow}
\Pi\longrightarrow 1$ et fixons  une section ensembliste $s : \Pi\rightarrow
\Gamma$ pour l'homomorphisme surjectif $\sigma$. Rappelons qu'un  noyau
abstrait $(\Psi_s, f)$ peut \^etre associ\'e \`a la section $s : \Pi\rightarrow
\Gamma$  par les expressions suivantes,
$$f(\alpha, \beta)=s(\alpha)s(\beta)[s(\alpha\beta)]^{-1}
\quad \textrm{ et } \quad
\Psi_s(\alpha)(g)=s(\alpha)gs(\alpha)^{-1}, \quad  \forall\alpha, \beta\in\Pi, g\in G$$

Ainsi, puisque pour tous les \'el\'ements  $\alpha$ et $\beta\in \Pi$ on a
$\Psi_s(\alpha)\circ \Psi_s(\beta)=i_{_{f(\alpha, \beta)}}\circ
\Psi_s(\alpha\beta)$ on d\'eduit que l'application $\Psi_s : \Pi\rightarrow
Aut(G)$ induit un homomorphisme au niveau des automorphismes ext\'erieurs qu'on
notera $\theta : \Pi\rightarrow Out(G)$.

Dans le reste de cette section on pose $\Psi :=\Psi_s$.

Rappelons aussi que dans la section 4, en munissant l'espace d'homologie
$\ell_1$-r\'eduite $\overline{H}_2^{\ell_1}(G, \mathbb R)$ par la structure de
$\Pi$-module de Banach qui est induite par $\theta : \Pi\rightarrow Out(G)$,
nous avons d\'emontr\'e que pour tout noyau abstrait $(\Psi, f)$  l'expression
(23),
$$\overline{\theta}_{_{\Psi, f}}(\alpha, \beta, \gamma) =
\overline{\mathbf{m}}_{_{2}}(\Psi(\alpha)(f(\beta, \gamma), f(\alpha, \beta\gamma))-
\overline{\mathbf{m}}_{_{2}}(f(\alpha, \beta), f(\alpha\beta, \gamma)), \qquad \forall\alpha, \beta, \gamma\in \Pi
$$
d\'efinit  un  $3$-cocycle born\'e $\overline{\theta}_{_{\Psi, f}} : \Pi^3
\longrightarrow\overline{H}_2^{\ell_1}(G, \mathbb R)$ dont la classe de
cohomologie born\'ee associ\'ee $[\overline{\theta}_{_{\Psi, f}}]\in H_b^3(\Pi,
\overline{H}_2^{\ell_1}(G, \mathbb R))$   d\'epend seulement de  $\theta :
\Pi\rightarrow Out(G)$.

De m\^eme, rappelons que dans le  th\'eor\`eme principal A de la section 3 nous
avons d\'emontr\'e que l'expression  (13)
$$ \mathbf{m}_{_{2}}(g, h) = (g, h) - \mathbf{m}(g) + \mathbf{m}(gh) - \mathbf{m}(h)
= (g, h) - \mathbf{m}\circ\partial_2(g, h), \qquad \forall g, h\in G  $$
induit un  $2$-cocycle born\'e homog\`ene $\overline{\mathbf{m}}_{_{2}} :
G^2\rightarrow \overline{H}_2^{\ell_1}(G, \mathbb R)$ qui repr\'esente  une
classe de cohomologie born\'ee $\mathbf{g}_{_{2}}\in H_b^2(G,
\overline{H}_2^{\ell_1}(G, \mathbb R))$  unique pour la relation suivante,
$$x\cup \mathbf{g}_{_{2}}=x, \qquad \forall x\in H_b^2(G, \mathbb R).$$

\subsection{La diff\'erentielle $d_{_{3, \ell_1}}$ envoie   $\mathbf{g}_{_{2}}$ sur    $[\theta]$}
Soient $1 \longrightarrow
G\stackrel{i}{\longrightarrow}\Gamma\stackrel{\sigma}{\longrightarrow}
\Pi\longrightarrow 1$ une extension de groupes discrets et  $s : \Pi\rightarrow
\Gamma$ une section ensembliste de l'homomorphisme surjectif $\sigma$. Puisque
pour tout \'el\'ement $\gamma\in\Gamma$ on a $\sigma(\gamma)=
\sigma(s\circ\sigma(\gamma))$ on peut  d\'efinir une application $h :
\Gamma\rightarrow G$ en posant pour tout $\gamma\in \Gamma$,
$$ \gamma=h(\gamma).s\circ\sigma(\gamma)$$

Il r\'esulte de la d\'efinition de l'application  $h : \Gamma\rightarrow G$ que
$$h(g)=g, \ \forall g\in G$$

\begin{proposition}
L'image r\'eciproque $\sigma^*(\overline{\theta}_{_{\Psi, f}}) :
\Gamma^3\rightarrow\overline{H}_2^{\ell_1}(G, \mathbb R)$ est \'egale au cobord
de la concha\^\i ne born\'ee $\overline{T} : \Gamma^2\rightarrow
\overline{H}_2^{\ell_1}(G, \mathbb R)$  qui est d\'efinie par l'expression,
\begin{eqnarray}
\quad \overline{T}(\gamma_{_{1}}, \gamma_{_{2}}) =
\overline{\mathbf{m}}_{_{2}}(f(\sigma(\gamma_{_{1}}), \sigma(\gamma_{_{2}})),
h(\gamma_{_{1}}\gamma_{_{2}})^{-1}) - \overline{\mathbf{m}}_{_{2}}(\Psi(\sigma(\gamma_{_{1}}))(
h(\gamma_{_{2}})^{-1}), h(\gamma_{_{1}})^{-1})
\end{eqnarray}
En cons\'equence, la classe de cohomologie born\'ee $\sigma_b([\theta_{_{\Psi,
f}}])\in H_b^3(\Gamma, \overline{H}_2^{\ell_1}(G, \mathbb R))$ est nulle.
\end{proposition}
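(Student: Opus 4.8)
The plan is to reduce the equality $\sigma^*(\overline{\theta}_{\Psi,f}) = d\overline{T}$ to a purely real-valued statement by pairing with an arbitrary bounded $2$-cocycle on $G$, and then to invoke Hahn--Banach exactly as in the proofs de l'affirmation 6 et de la proposition 10. Write $\alpha_i = \sigma(\gamma_i)$ and lift $\overline{T}$ to the cochain $T : \Gamma^2 \to Z_2^{\ell_1}(G,\mathbb{R})$ obtained by replacing $\overline{\mathbf{m}}_{_{2}}$ by $\mathbf{m}_{_{2}}$ in (27). It suffices to show that the $3$-cochain $w := \sigma^*(\theta_{\Psi,f}) - dT : \Gamma^3 \to Z_2^{\ell_1}(G,\mathbb{R})$ takes its values in $\overline{B_2^{\ell_1}(G,\mathbb{R})}$; passing to reduced homology then gives $\sigma^*(\overline{\theta}_{\Psi,f}) = d\overline{T}$. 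Since $\mathbf{m}_{_{2}}$ is bounded, $\overline{T}$ is bounded, so once this equality holds the class $\sigma_b([\theta_{\Psi,f}]) = [\sigma^*(\overline{\theta}_{\Psi,f})] = [d\overline{T}]$ vanishes, which is the announced consequence.

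First I would record the cocycle relation satisfied by $h$. From the definitions $\gamma = h(\gamma)\,s(\sigma(\gamma))$, $\Psi(\alpha)(g) = s(\alpha)g s(\alpha)^{-1}$ and $f(\alpha,\beta) = s(\alpha)s(\beta)s(\alpha\beta)^{-1}$, a direct manipulation of $\gamma_1\gamma_2 = h(\gamma_1)s(\alpha_1)h(\gamma_2)s(\alpha_2)$ yields
\[
h(\gamma_1\gamma_2) = h(\gamma_1)\,\Psi(\alpha_1)(h(\gamma_2))\,f(\alpha_1,\alpha_2), \qquad \alpha_i = \sigma(\gamma_i).
\]
This is the single algebraic input coming from the extension; it is the analogue for $h$ of the non-abelian $2$-cocycle relation (14) satisfied by $f$.

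Next, fix $x \in H_b^2(G,\mathbb{R})$ with homogeneous representative $c_x$ and pair everything against it. Using $\langle c_x, \mathbf{m}_{_{2}}(g,h)\rangle = c_x(g,h)$ (cf. aff. 5), the $\Pi$-equivariance of the duality bracket, and the fact that $\Psi(\alpha_1)^*(c_x) = c_{\Psi(\alpha_1)_b(x)}$ is again homogeneous (pr. 5), the coboundary $\langle c_x, dT(\gamma_1,\gamma_2,\gamma_3)\rangle$ becomes a real four-term expression whose leading term carries $\Psi(\alpha_1)^*(c_x)$ in place of $c_x$, exactly as in the proof de la proposition 10; while on the other side $\langle c_x, \sigma^*(\theta_{\Psi,f})(\gamma_1,\gamma_2,\gamma_3)\rangle = c_{_{x,\overline{\Psi}}}(\alpha_1,\alpha_2,\alpha_3)$ by (24). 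The claim is then the real identity
\[
c_{_{x,\overline{\Psi}}}(\alpha_1,\alpha_2,\alpha_3) = \langle c_x, dT(\gamma_1,\gamma_2,\gamma_3)\rangle,
\]
which I would verify by substituting the relation for $h$ above into the two summands of each $T(\gamma_i,\gamma_j)$, expanding every term $c_x(\cdot,\cdot)$, and telescoping with the $2$-cocycle identity $dc_x = 0$ and the relation (14). Equivalently, and in closer parallel with les propositions 6, 8 et 10, one may lift all the quantities to the central extension $\overline{G}$, form a suitable element built from the $F_x(\alpha_i,\alpha_j) = s_x(f(\alpha_i,\alpha_j))$ and the $s_x(h(\gamma_i))^{\pm 1}$, develop it in two ways using $\overline{\Psi}(\alpha)\circ\overline{\Psi}(\beta) = i_{F_x(\alpha,\beta)}\circ\overline{\Psi}(\alpha\beta)$ and la formule (17), and read off the identity by applying the homogeneous quasimorphism $\varphi$, whose restriction to $Z(\overline{G})$ is a homomorphism equal to the identity on $\mathbb{R}$.

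The bulk of the work, and the main obstacle, is the bookkeeping in this real computation: one must match the sign pattern of the four coboundary terms with the two summands of $c_{_{x,\overline{\Psi}}}$, and repeatedly use $dc_x = 0$ to cancel the parasitic $c_x$-terms created by the inverses $h(\cdots)^{-1}$ appearing in (27). Once the real identity holds for every $x \in H_b^2(G,\mathbb{R})$, the difference $w = \sigma^*(\theta_{\Psi,f}) - dT$ pairs to zero against every bounded $2$-cocycle on $G$; by the Hahn--Banach separation argument (as in aff. 6) it therefore takes its values in the closed subspace $\overline{B_2^{\ell_1}(G,\mathbb{R})}$, so that in $\overline{H}_2^{\ell_1}(G,\mathbb{R})$ we obtain $\sigma^*(\overline{\theta}_{\Psi,f}) = d\overline{T}$. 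The boundedness of $\overline{T}$ then yields $\sigma_b([\theta_{\Psi,f}]) = 0$.
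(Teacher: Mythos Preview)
Your proposal is correct and follows essentially the same strategy as the paper: reduce to the real identity $\langle c_x,\sigma^*(\theta_{\Psi,f})\rangle=\langle c_x,dT\rangle$ for every $x\in H_b^2(G,\mathbb R)$, then apply Hahn--Banach. The paper carries out the computation along the second of your two suggested routes, lifting everything to $\overline G$: it first isolates as a separate statement (Affirmation~7) the formula $F_x(\sigma(\gamma_1),\sigma(\gamma_2))=\overline\Psi(\sigma(\gamma_1))(h_x(\gamma_2)^{-1})h_x(\gamma_1)^{-1}h_x(\gamma_1\gamma_2)\langle c_x,T(\gamma_1,\gamma_2)\rangle$ (whose proof uses exactly your relation $h(\gamma_1\gamma_2)=h(\gamma_1)\Psi(\alpha_1)(h(\gamma_2))f(\alpha_1,\alpha_2)$ together with $dc_x=0$), and then expands $F_x(\sigma(\gamma_1),\sigma(\gamma_2))F_x(\sigma(\gamma_1\gamma_2),\sigma(\gamma_3))$ in $\overline G$ using (17), the relation $\overline\Psi(\alpha)\circ\overline\Psi(\beta)=i_{F_x(\alpha,\beta)}\circ\overline\Psi(\alpha\beta)$, and the vanishing of the classical obstruction $K_{\Psi,f}$, before applying $\varphi$ to obtain the real identity.
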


Tout le reste de la section sera consacr\'e \`a la d\'emonstration de la
proposition 11.

Soit $x\in H_b^2(G, \mathbb R)$ une classe de cohomologie born\'ee et $0
\longrightarrow \mathbb R\stackrel{j}{\longrightarrow}\overline
G{\rfl{\hfl{s_x}} {p}}G \longrightarrow 1$ l'extension centrale qui lui est
associ\'ee (cf. 3.4.2). Posons, $$h_{{x}} = s_x\circ h :
\Gamma{\longrightarrow}\overline{G} \qquad \textrm{  et } \qquad F_x = s_x\circ
f : \Pi^2\longrightarrow \overline{G}$$ les rel\`evements respectifs  sur
$\overline G$ des applications $h : \Gamma\longrightarrow G$ et $f : \Pi\times
\Pi\longrightarrow G$.

\begin{affirmation} Avec les notations ci-dessus la cocha\^\i ne non ab\'elienne $F_x(\sigma, \sigma) : \Gamma^2\longrightarrow\overline{G}$ s'\'ecrit sous la forme,
\begin{eqnarray}F_x(\sigma(\gamma_{_{1}}), \sigma(\gamma_{_{2}}))=
\overline{\Psi}(\sigma(\gamma_{_{1}}))(h_{{x}}(\gamma_{_{2}})^{-1})h_{{x}}(\gamma_{_{1}})^{-1}
h_{{x}}(\gamma_{_{1}}\gamma_{_{2}})<c_x, T(\gamma_{_{1}}, \gamma_{_{2}})>
\end{eqnarray}
o\`u $T:\Gamma^2\longrightarrow Z_2^{\ell_1}(G, \mathbb R)$ est un
repr\'esentant de la $2$-cocha\^\i ne  born\'ee $\overline{T} : \Gamma
\longrightarrow \overline{H}_2^{\ell_1}(G, \mathbb R)$ d\'efinie par
l'expression (27) et o\`u $\overline{\Psi} : \Pi\longrightarrow
Sym(\overline{G}, \mathbb R)$ d\'esigne la famille de bijections d\'efinies
dans le lemme 4 par l'expression (16).
\end{affirmation}

\begin{proof}[D\'emonstration] Avant d'\'etablir l'expression (28) nous allons d'abord
exprimer l'application $f(\sigma, \sigma) : \Gamma^2\longrightarrow G$ en
fonction des applications  $\Psi : \Pi\longrightarrow \textrm{Aut}(G)$ et $h :
\Gamma\longrightarrow G$.

D'abord observons que pour tous les \'el\'ements $\gamma_{_{1}}$ et
$\gamma_{_{2}}\in \Gamma$ on peut \'ecrire
\begin{eqnarray}
f(\sigma(\gamma_{_{1}}), \sigma(\gamma_{_{2}}))&=&
s\circ\sigma(\gamma_{_{1}})s\circ\sigma(\gamma_{_{2}})
s\circ\sigma(\gamma_{_{1}}\gamma_{_{2}})^{-1}\nonumber\\
&=& [s(\sigma(\gamma_{_{1}}))h(\gamma_2)^{-1} s(\sigma(\gamma_{_{1}}))^{-1}]
s\circ\sigma(\gamma_{_{1}})  \gamma_2 s\circ\sigma(\gamma_{_{1}}\gamma_{_{2}})^{-1}
\nonumber\\
&=&\Psi(\sigma(\gamma_{_{1}}))(h(\gamma_{_{2}})^{-1})h(\gamma_{_{1}})^{-1}
h(\gamma_{_{1}}\gamma_{_{2}}).
\nonumber
\end{eqnarray}

Ensuite, d\'eveloppons l'expression $s_x\circ  f(\sigma(\gamma_{_{1}}),
\sigma(\gamma_{_{2}}))\in\overline{G}$  tout en utilisant le fait que le
d\'efaut de la section $s_x : G\rightarrow \overline{G}$ est \'egal \`a
l'unique $2$-cocycle born\'e homog\`ene $c_x : G^2\longrightarrow \mathbb R$
qui repr\'esente la classe de cohomologie $x\in H_b^2(G, \mathbb R)$ :
\begin{eqnarray}
F_x(\sigma(\gamma_{_{1}}), \sigma(\gamma_{_{2}}))&=&
s_x\circ f(\sigma(\gamma_{_{1}}), \sigma(\gamma_{_{2}}))\nonumber \\
&=&s_x[\Psi(\sigma(\gamma_{_{1}}))
(h(\gamma_{_{2}})^{-1})h(\gamma_{_{1}})^{-1}h(\gamma_{_{1}}\gamma_{_{2}})]
\nonumber\\
&=&s_x[\Psi(\sigma(\gamma_{_{1}}))(h(\gamma_{_{2}})^{-1}]
s_x[h(\gamma_{_{1}})^{-1}h(\gamma_{_{1}}\gamma_{_{2}})]\nonumber\\
&&c_x(\Psi(\sigma(\gamma_{_{1}}))(h(\gamma_{_{2}})^{-1}),
h(\gamma_{_{1}})^{-1}h(\gamma_{_{1}}\gamma_{_{2}}))^{-1}\nonumber\\
&=& s_x[\Psi(\sigma(\gamma_{_{1}}))(h(\gamma_{_{2}})^{-1}]
s_x(h(\gamma_{_{1}})^{-1})s_x(h(\gamma_{_{1}}\gamma_{_{2}}))\nonumber
\\
&&c_x(\Psi(\sigma(\gamma_{_{1}}))(h(\gamma_{_{2}})^{-1}),
h(\gamma_{_{1}})^{-1}h(\gamma_{_{1}}\gamma_{_{2}}))^{-1}
c_x(h(\gamma_{_{1}})^{-1}, h(\gamma_{_{1}}\gamma_{_{2}}))^{-1}.\nonumber
\end{eqnarray}

Par cons\'equent, si on se rappelle que la bijection $\overline{\Psi}(\alpha) :
\overline{G} \longrightarrow \overline{G}$ est d\'efinie par la formule (16),
$$\overline{\Psi}(\alpha)(\overline g)= s_x(\Psi(\alpha)(p(\overline
g))\varphi(\overline g)$$ et que $\varphi\circ s_x=0$, on pourra r\'e\'ecrire
l'expression pr\'ec\'edente sous la forme  :
\begin{eqnarray}
F_x(\sigma(\gamma_{_{1}}), \sigma(\gamma_{_{2}}))
&=&\overline{\Psi}(\sigma(\gamma_{_{1}}))(h_{_{x}}(\gamma_{_{2}})^{-1})
s_x(h(\gamma_{_{1}})^{-1})s_x(h(\gamma_{_{1}}\gamma_{_{2}}))
k(\gamma_{_{1}}, \gamma_{_{2}})\nonumber\\
&=&
\overline{\Psi}(\sigma(\gamma_{_{1}}))(h_{_{x}}(\gamma_{_{2}})^{-1})h_{_{x}}
(\gamma_{_{1}})^{-1}h_{_{x}}(\gamma_{_{1}}\gamma_{_{2}})k(\gamma_{_{1}},
\gamma_{_{2}})\nonumber
\end{eqnarray}
o\`u $k : \Gamma^2\longrightarrow \mathbb R$ d\'esigne la cocha\^\i ne born\'ee
d\'efinie par,
\begin{eqnarray}
k(\gamma_{_{1}}, \gamma_{_{2}})=
-c_x(h(\gamma_{_{1}})^{-1}, h(\gamma_{_{1}}\gamma_{_{2}}))-
c_x(\Psi(\sigma(\gamma_{_{1}}))(h(\gamma_{_{2}})^{-1}),
h(\gamma_{_{1}})^{-1}h(\gamma_{_{1}}\gamma_{_{2}}))
\nonumber
\end{eqnarray}

Il est clair que pour aboutir \`a la formule (28), il suffit qu'on d\'emontre
que la $2$-cocha\^\i ne born\'ee $k : \Gamma^2\longrightarrow \mathbb R$ est
\'egale au crochet de dualit\'e, $k(\gamma_{_{1}}, \gamma_{_{2}})= <c_x,
{T}(\gamma_{_{1}}, \gamma_{_{2}})>$, o\`u $T:\Gamma^2\longrightarrow
Z_2^{\ell_1}(G, \mathbb R)$ d\'esigne  un repr\'esentant de la cocha\^\i ne
born\'ee $\overline{T}$ qui est d\'efinie par l'expression (27).

En effet, puisque $c_x : G^2\longrightarrow \mathbb R$ est un $2$-cocycle
born\'e homog\`ene ceci nous permet  d'\'ecrire  pour tous
$\alpha=\Psi(\sigma(\gamma_{_{1}}))(h(\gamma_{_{2}})^{-1}),
\beta=h(\gamma_{_{1}})^{-1}$ et $\gamma=h(\gamma_{_{1}}\gamma_{_{2}})\in G$ qu'on a :
\begin{eqnarray}
0&=&dc_x(\alpha, \beta, \gamma)=dc_x(\Psi(\sigma(\gamma_{_{1}}))(h(\gamma_{_{2}})^{-1}),
h(\gamma_{_{1}})^{-1},
h(\gamma_{_{1}}\gamma_{_{2}}))\nonumber\\
&=&c_x(h(\gamma_{_{1}})^{-1}, h(\gamma_{_{1}}\gamma_{_{2}}))-
c_x(\Psi(\sigma(\gamma_{_{1}}))(h(\gamma_{_{2}})^{-1})h(\gamma_{_{1}})^{-1},
h(\gamma_{_{1}}\gamma_{_{2}})) \nonumber\\
&+&
c_x(\Psi(\sigma(\gamma_{_{1}}))(h(\gamma_{_{2}})^{-1}),
h(\gamma_{_{1}})^{-1}
h(\gamma_{_{1}}\gamma_{_{2}})) -
c_x(\Psi(\sigma(\gamma_{_{1}}))(h(\gamma_{_{2}})^{-1}),
h(\gamma_{_{1}})^{-1})
\nonumber\\
&=&[c_x(h(\gamma_{_{1}})^{-1}, h(\gamma_{_{1}}\gamma_{_{2}})) +
c_x(\Psi(\sigma(\gamma_{_{1}}))(h(\gamma_{_{2}})^{-1}), h(\gamma_{_{1}})^{-1}
h(\gamma_{_{1}}\gamma_{_{2}})]\nonumber\\
&-&[c_x(\Psi(\sigma(\gamma_{_{1}}))(h(\gamma_{_{2}})^{-1})
h(\gamma_{_{1}})^{-1}, h(\gamma_{_{1}}\gamma_{_{2}})) +
c_x(\Psi(\sigma(\gamma_{_{1}}))(h(\gamma_{_{2}})^{-1}),
h(\gamma_{_{1}})^{-1})] \nonumber \\
&=&-k(\gamma_{_{1}}, \gamma_{_{2}}) \nonumber\\
&-&[c_x(\Psi(\sigma(\gamma_{_{1}}))(h(\gamma_{_{2}})^{-1})
h(\gamma_{_{1}})^{-1}, h(\gamma_{_{1}}\gamma_{_{2}})) +
c_x(\Psi(\sigma(\gamma_{_{1}}))(h(\gamma_{_{2}})^{-1}),
h(\gamma_{_{1}})^{-1})]. \nonumber\end{eqnarray}

Enfin, en remarquant que  le $2$-cocycle born\'e $c_x$ est  homog\`ene il en
r\'esulte que  pour tous $g$ et $h\in G$ on a $c_x(gh^{-1}, h)=- c_x(g,
h^{-1})$. De m\^eme, en remarquant que nous avons \'etabli au d\'ebut de cette
d\'emonstration que
$$\Psi(\sigma(\gamma_{_{1}}))(h(\gamma_{_{2}})^{-1})h(\gamma_{_{1}})^{-1}=f(\sigma(\gamma_{_{1}}),
\sigma(\gamma_{_{2}}))h(\gamma_1\gamma_2)^{-1}$$   ceci entra\^\i ne :
\begin{eqnarray}
k(\gamma_{_{1}}, \gamma_{_{2}})&=&
-[c_x(f(\sigma(\gamma_{_{1}}), \sigma(\gamma_{_{2}}))h(\gamma_{_{1}}\gamma_{_{2}})^{-1},
h(\gamma_{_{1}}\gamma_{_{2}})) +
c_x(\Psi(\sigma(\gamma_{_{1}}))(h(\gamma_{_{2}})^{-1}),
h(\gamma_{_{1}})^{-1})] \nonumber \\
&=&
- [-c_x(f(\sigma(\gamma_{_{1}}), \sigma(\gamma_{_{2}})),
h(\gamma_{_{1}}\gamma_{_{2}})^{-1}) +
c_x(\Psi(\sigma(\gamma_{_{1}}))(h(\gamma_{_{2}})^{-1}),
h(\gamma_{_{1}})^{-1})]. \nonumber
\end{eqnarray}

D'o\`u,  pour tous les \'el\'ements $\gamma_{_{1}}$ et $
\gamma_{_{2}}\in\Gamma$ on a $k(\gamma_{_{1}}, \gamma_{_{2}})= <c_x,
T(\gamma_{_{1}}, \gamma_{_{2}})>$.\end{proof}

\begin{proof}[D\'emonstration de la proposition 11]
Pour \'etablir  l'\'egalit\'e : $\sigma^*(\overline{\theta}_{_{\Psi, f}})
=d\overline{T}$, nous allons d\'evelopper l'expression de la cocha\^\i ne
born\'ee r\'eelle $<c_x, \sigma^*({\theta}_{_{\Psi, f}})>$.

Pour cela, nous allons utiliser la formule (28) \'etablie dans l'affirmation
pr\'ec\'edente et le fait que $F_x=s_x\circ f : \Pi^2\rightarrow \overline{G}$
contr\^ole la d\'eviation de l'application $\overline{\Psi} : \Pi\rightarrow
\mathrm{Sym}(\overline{G}, \mathbb R)$ \`a \^etre un homomorphisme (cf.  lemme 4 formule
(4)).

D'abord, pour tous $\gamma_{_{1}}$, $\gamma_{_{2}}$ et  $\gamma_{_{3}}\in
\Gamma$ appliquons l'expression (28) au couple $\gamma_{_{1}}\gamma_{_{2}}$
et $\gamma_{_{3}}$  :
\begin{eqnarray}
F_x(\sigma(\gamma_{_{1}}\gamma_{_{2}}), \sigma(\gamma_{_{3}}))&=&
\overline{\Psi}(\sigma(\gamma_{_{1}})\sigma(\gamma_{_{2}}))(h_{{x}}(\gamma_{_{3}})^{-1})
h_{{x}}(\gamma_{_{1}}\gamma_{_{2}})^{-1}
h_{{x}}(\gamma_{_{1}}\gamma_{_{2}}\gamma_{_{3}})
<c_x, T(\gamma_{_{1}}\gamma_{_{2}},\gamma_{_{3}})>
\nonumber\\
&=&F_x(\sigma(\gamma_{_{1}}), \sigma(\gamma_{_{2}}))^{-1}
\overline{\Psi}(\sigma(\gamma_{_{1}}))\circ\overline{\Psi}(\sigma(\gamma_{_{2}}))(
h_{_{x}}(\gamma_{_{3}})^{-1})
[F_x(\sigma(\gamma_{_{1}}), \sigma(\gamma_{_{2}})) \nonumber\\
&&h_{_{x}}(\gamma_{_{1}}\gamma_{_{2}})^{-1}]
h_{_{x}}(\gamma_{_{1}}\gamma_{_{2}}\gamma_{_{3}})
<c_x, T(\gamma_{_{1}}\gamma_{_{2}}, \gamma_{_{3}})>\nonumber\\
&=&F_x(\sigma(\gamma_{_{1}}), \sigma(\gamma_{_{2}}))^{-1}
\overline{\Psi}(\sigma(\gamma_{_{1}}))\circ\overline{\Psi}(\sigma(\gamma_{_{2}}))(
h_{_{x}}(\gamma_{_{3}})^{-1}) \nonumber\\
&&[\overline{\Psi}(\sigma(\gamma_{_{1}}))(h_{_{x}}(\gamma_{_{2}})^{-1})h_{_{x}}(
\gamma_{_{1}})^{-1}]
h_{{x}}(\gamma_{_{1}}\gamma_{_{2}}\gamma_{_{3}}) \nonumber\\
&&<c_x, T(\gamma_{_{1}}, \gamma_{_{2}})>
<c_x, T(\gamma_{_{1}}\gamma_{_{2}}, \gamma_{_{3}})>\nonumber
\end{eqnarray}

Observons que si   on multiplie  la derni\`ere ligne ci-dessus par
l'\'el\'ement $F_x(\sigma(\gamma_{_{1}}), \sigma(\gamma_{_{2}}))$ (depuis la
gauche) on obtient l'expression,
\begin{eqnarray}
F_x(\sigma(\gamma_{_{1}}), \sigma(\gamma_{_{2}}))F_x(\sigma(\gamma_{_{1}}
\gamma_{_{2}}), \sigma(\gamma_{_{3}}))&=&
\overline{\Psi}(\sigma(\gamma_{_{1}}))[
\overline{\Psi}(\sigma(\gamma_{_{2}}))(h_{{x}}(\gamma_{_{3}})^{-1})]\nonumber \\
&&\overline{\Psi}(\sigma(\gamma_{_{1}}))(h_{{x}}(\gamma_{_{2}})^{-1})
h_{{x}}(\gamma_{_{1}})^{-1}
h_{{x}}(\gamma_{_{1}}\gamma_{_{2}}\gamma_{_{3}})\nonumber\\
&&<c_x, T(\gamma_{_{1}}, \gamma_{_{2}})
<c_x, T(\gamma_{_{1}}\gamma_{_{2}}, \gamma_{_{3}})>\nonumber
\end{eqnarray}

Ainsi, en utilisant le fait que la d\'eviation de la bijection
$\overline{\Psi}(\sigma(\gamma_{_{1}})) : \overline{G}\rightarrow \overline{G}$
\`a \^etre un homomorphisme est contr\^ol\'ee par l'expression (11), $\forall
\bar g, \bar h\in \overline{G},$
$$\overline{\Psi}(\sigma(\gamma_{_{1}}))(\bar g)\overline{\Psi}(\sigma(\gamma_{_{1}}))(\bar h) =
\overline{\Psi}(\sigma(\gamma_{_{1}}))(\bar{g}\bar{h})[(\Psi_s(\sigma(\gamma_{_{1}})))^*(c_x)(p(\bar g), p(\bar h))][
c_x(p(\bar g), p(\bar h))^{-1}]
$$
on pourra \'ecrire, \begin{eqnarray} F_x(\sigma(\gamma_{_{1}}),
\sigma(\gamma_{_{2}}))F_x(\sigma(\gamma_{_{1}} \gamma_{_{2}}),
\sigma(\gamma_{_{3}})) &=&\overline{\Psi}(\sigma(\gamma_{_{1}}))[
\overline{\Psi}(\sigma(\gamma_{_{2}}))(h_{_{x}}(\gamma_{_{3}})^{-1})h_{_{x}}(
\gamma_{_{2}})^{-1}]\nonumber\\
&&h_{_{x}}(\gamma_{_{1}})^{-1}
h_{_{x}}(\gamma_{_{1}}\gamma_{_{2}}\gamma_{_{3}})<c_x, T(\gamma_{_{1}},
\gamma_{_{2}})> \nonumber\\
&&<c_x, T(\gamma_{_{1}}\gamma_{_{2}}, \gamma_{_{3}})>X(\gamma_{_{1}},
\gamma_{_{2}}, \gamma_{_{3}})
\end{eqnarray}
o\`u $X : \Gamma^3\rightarrow \mathbb R$ d\'esigne la cocha\^\i ne born\'ee
r\'eelle d\'efinie par l'expression suivante :
\begin{eqnarray}
X(\gamma_{_{1}}, \gamma_{_{2}}, \gamma_{_{3}})&=&
(\Psi_s(\sigma(\gamma_{_{1}})))^*(c_x)(\Psi_s(\sigma(\gamma_{_{2}}))(h(\gamma_{_{3}})^{-1}),
h(\gamma_{_{2}})^{-1})\nonumber \\
&& -c_x(\Psi_s(\sigma(\gamma_{_{2}}))(h(\gamma_{_{3}})^{-1}),
h(\gamma_{_{2}})^{-1}) \nonumber
\end{eqnarray}

Remarquons que dans l'expression  (29) si on applique la formule (28) on
pourra  remplacer l'\'el\'ement
$\overline{\Psi}(\sigma(\gamma_{_{2}}))(h_{_{x}}(\gamma_{_{3}})^{-1})h_{_{x}}(
\gamma_{_{2}})^{-1}$ (mis entre crochets dans (29)) par l'\'el\'ement
$F_x(\sigma(\gamma_{_{2}}),
\sigma(\gamma_{_{3}}))h_{_{x}}(\gamma_{_{2}}\gamma_{_{3}})^{-1}<c_x,
T(\gamma_{_{2}}, \gamma_{_{3}})>^{-1} $ on obtient :
\begin{eqnarray}
F_x(\sigma(\gamma_{_{1}}), \sigma(\gamma_{_{2}}))F_x(\sigma(\gamma_{_{1}}
\gamma_{_{2}}),
\sigma(\gamma_{_{3}}))&=&
\overline{\Psi}(\sigma(\gamma_{_{1}}))[F_x(\sigma(\gamma_{_{2}}),
\sigma(\gamma_{_{3}}))h_{_{x}}(\gamma_{_{2}}\gamma_{_{3}})^{-1}<c_x,
T(\gamma_{_{2}}, \gamma_{_{3}})>^{-1}]
\nonumber\\
&&h_{_{x}}(\gamma_{_{1}})^{-1}
h_{_{x}}(\gamma_{_{1}}\gamma_{_{2}}\gamma_{_{3}})
<c_x, T(\gamma_{_{1}},
\gamma_{_{2}})>
\nonumber\\
&&<c_x, T(\gamma_{_{1}}\gamma_{_{2}}, \gamma_{_{3}})>
X(\gamma_{_{1}}, \gamma_{_{2}}, \gamma_{_{3}})\nonumber\\
&=&
\overline{\Psi}(\sigma(\gamma_{_{1}}))[F_x(\sigma(\gamma_{_{2}}),
\sigma(\gamma_{_{3}}))h_{_{x}}(\gamma_{_{2}}\gamma_{_{3}})^{-1}]
h_{_{x}}(\gamma_{_{1}})^{-1}
\nonumber\\
&&
h_{_{x}}(\gamma_{_{1}}\gamma_{_{2}}\gamma_{_{3}})
<c_x, T(\gamma_{_{2}}, \gamma_{_{3}})>^{-1}
<c_x, T(\gamma_{_{1}}, \gamma_{_{2}})>\nonumber\\
&&<c_x, T(\gamma_{_{1}}\gamma_{_{2}}, \gamma_{_{3}})>
X(\gamma_{_{1}}, \gamma_{_{2}}, \gamma_{_{3}})
\nonumber\\
&=&\overline{\Psi}(\sigma(\gamma_{_{1}}))(F_x(\sigma(\gamma_{_{2}}),
\sigma(\gamma_{_{3}}))
[\overline{\Psi}(\sigma(\gamma_{_{1}}))(h_{_{x}}(\gamma_{_{2}}\gamma_{_{3}})^{-1})
\\
&& h_{_{x}}(\gamma_{_{1}})^{-1}
h_{_{x}}(\gamma_{_{1}}\gamma_{_{2}}\gamma_{_{3}})]Y(\gamma_{_{1}}, \gamma_{_{2}},
\gamma_{_{3}})<c_x, T(\gamma_{_{2}}, \gamma_{_{3}})>^{-1}\nonumber\\
&&<c_x, T(\gamma_{_{1}}, \gamma_{_{2}})>
<c_x, T(\gamma_{_{1}}\gamma_{_{2}}, \gamma_{_{3}})>
X(\gamma_{_{1}}, \gamma_{_{2}}, \gamma_{_{3}})
\nonumber
\end{eqnarray}
o\`u $Y : \Gamma^3\rightarrow \mathbb R$ d\'esigne la  cocha\^\i ne born\'ee
r\'eelle d\'efinie par l'expression,
\begin{eqnarray}
Y(\gamma_{_{1}}, \gamma_{_{2}}, \gamma_{_{3}})&=&
c_x(f(\sigma(\gamma_{_{2}}), \sigma(\gamma_{_{3}})), h(\gamma_{_{2}}
\gamma_{_{3}})^{-1}) \nonumber\\
&&-
(\Psi_s(\sigma(\gamma_{_{1}})))^*(c_x)(f(\sigma(\gamma_{_{2}}), \sigma(\gamma_{_{3}})),
h(\gamma_{_{2}}\gamma_{_{3}})^{-1}) \nonumber
\end{eqnarray}
d\'eduite de la formule (17) qui contr\^ole la d\'eviation de la bijection
$\overline{\Psi}(\sigma(\gamma_{_{1}})) : \overline{G} \rightarrow
\overline{G}$ \`a \^etre un homomorphisme.

Ci-dessous, pour d\'evelopper l'expression de  la cocha\^\i ne ab\'elienne
$\sigma^*(K_{_{x, \overline{\Psi}}}) : \Gamma^3\rightarrow Z(\overline G)$ qui
est \'egale \`a,
$$\overline{\Psi}(\sigma(\gamma_{_{1}})(
F_x(\sigma(\gamma_{_{2}}), \sigma(\gamma_{_{3}}))) F_x(\sigma(\gamma_{_{1}}),
\sigma(\gamma_{_{2}}\gamma_{_{3}})) F_x(\sigma(\gamma_{_{1}}\gamma_{_{2}}),
\sigma(\gamma_{_{3}}))^{-1} F_x(\sigma(\gamma_{_{1}}),
\sigma(\gamma_{_{2}}))^{-1}$$ nous allons remplacer  l'\'el\'ement
$\overline{\Psi}(\sigma(\gamma_{_{1}}))(h_{_{x}}(\gamma_{_{2}}\gamma_{_{3}})^{-1})
h_{_{x}}(\gamma_{_{1}})^{-1} h_{_{x}}(\gamma_{_{1}}\gamma_{_{2}
}\gamma_{_{3}})$ (mis entre crochets dans (30)) par l'\'el\'ement
$F_x(\sigma(\gamma_{_{1}}), \sigma(\gamma_{_{2}}\gamma_{_{3}}))) <c_x,
T(\gamma_{_{1}}, \gamma_{_{2}}\gamma_{_{3}})>^{-1}$ pour obtenir  :
\begin{eqnarray}
F_x(\sigma(\gamma_{_{1}}), \sigma(\gamma_{_{2}}))F_x(\sigma(\gamma_{_{1}}
\gamma_{_{2}}),
\sigma(\gamma_{_{3}}))&=&
\overline{\Psi}(\sigma(\gamma_{_{1}}))(F_x(\sigma(\gamma_{_{2}}),
\sigma(\gamma_{_{3}}))
F_x(\sigma(\gamma_{_{1}}), \sigma(\gamma_{_{2}}\gamma_{_{3}})))\nonumber\\
&&<c_x,  T(\gamma_{_{1}}, \gamma_{_{2}}\gamma_{_{3}})>^{-1}
<c_x, T(\gamma_{_{1}},
\gamma_{_{2}})> \nonumber \\
&& <c_x, T(\gamma_{_{2}}, \gamma_{_{3}})>^{-1}
<c_x, T(\gamma_{_{1}}\gamma_{_{2}}, \gamma_{_{3}})>\nonumber\\
&& X(\gamma_{_{1}}, \gamma_{_{2}}, \gamma_{_{3}})
Y(\gamma_{_{1}}, \gamma_{_{2}}, \gamma_{_{3}})
\end{eqnarray}

Mais comme la repr\'esentation ext\'erieure $\theta : \Pi\rightarrow Out(G)$
est d\'efinie  \`a partir d'une extension de groupes,  son $3$-cocycle
d'obstruction $K_{_{\Psi, f}}$ est nul. Ainsi, si on applique la formule (19)
qui donne
$$K_{_{x, \overline{\Psi}}} = \varphi_*(K_{_{x, \overline{\Psi}}}) +
(s_x)_*(K_{_{\Psi, f}}) \in Z(\overline G)\simeq \mathbb R\oplus Z(G)$$ on en
d\'eduit qu'en fait la cocha\^\i ne  $K_{_{x,
\overline{\Psi}}}=\varphi_*(K_{_{x, \overline{\Psi}}}) : \Pi^3\rightarrow
\mathbb R$. D'autre part, en utilisant simultan\'ement l'expression (31) et
l'expression  (20) de la proposition 7 on peut \'ecrire dans le groupe
commutatif additif $(Z(\overline{G}), +)$  que :
\begin{eqnarray}
<c_x, \sigma^*(\theta_{_{\Psi, f}})(\gamma_{_{1}}, \gamma_{_{2}}, \gamma_{_{3}})>
&=&\varphi(K_{_{x, \overline{\Psi}}}(\sigma(\gamma_{_{1}}), \sigma(\gamma_{_{2}}),
\sigma(\gamma_{_{3}}))) \nonumber \\
&=& K_{_{x, \overline{\Psi}}}(\sigma(\gamma_{_{1}}), \sigma(\gamma_{_{2}}),
\sigma(\gamma_{_{3}}))   \nonumber \\
&=& <c_x,  T(\gamma_{_{1}}, \gamma_{_{2}}\gamma_{_{3}})> -
<c_x, T(\gamma_{_{1}}, \gamma_{_{2}})> +
<c_x, T(\gamma_{_{2}}, \gamma_{_{3}})>
\nonumber \\
&-& <c_x, T(\gamma_{_{1}}\gamma_{_{2}}, \gamma_{_{3}})>
- X(\gamma_{_{1}}, \gamma_{_{2}}, \gamma_{_{3}}) -
Y(\gamma_{_{1}}, \gamma_{_{2}}, \gamma_{_{3}})
\end{eqnarray}

Il devient maintenant clair que pour achever  la preuve de la proposition 11 il
suffit qu'on d\'emontre que le second membre de l'expression  (32) est \'egal
au crochet de dualit\'e  $$<c_x, dT(\gamma_{_{1}}, \gamma_{_{2}},
\gamma_{_{3}})>$$

En effet, si on  fait la somme des deux cocha\^\i nes $X :
\Gamma^3\longrightarrow \mathbb R$ et $Y : \Gamma^3\longrightarrow \mathbb R$
tout en appliquant l'expression (27) qui d\'efinit la cocha\^\i ne $\overline
T$ on voit facilement que pour tous les \'el\'ements $\gamma_{_{1}}$,
$\gamma_{_{2}}$ et $\gamma_{_{3}}\in \Gamma$ on a,
\begin{eqnarray}
X(\gamma_{_{1}}, \gamma_{_{2}}, \gamma_{_{3}})+
Y(\gamma_{_{1}}, \gamma_{_{2}}, \gamma_{_{3}})
&=& <c_x, T(\gamma_{_{2}}, \gamma_{_{3}})-
 \Big(\Psi_s(\sigma(\gamma_{_{1}}))\Big)_*(T(\gamma_{_{2}},
\gamma_{_{3}}))>\nonumber
\end{eqnarray}

Maintenant, gr\^ace \`a cette remarque on peut r\'e\'ecrire le second membre de
l'expression (32) sous la forme :

\begin{eqnarray}
<c_x, \sigma^*(\theta_{_{\Psi, f}})(\gamma_{_{1}}, \gamma_{_{2}}, \gamma_{_{3}})>&=&
<c_x,  T(\gamma_{_{1}}, \gamma_{_{2}}\gamma_{_{3}})> -
<c_x, T(\gamma_{_{1}}, \gamma_{_{2}})> \nonumber \\
&+& <c_x, T(\gamma_{_{2}}, \gamma_{_{3}})>
- <c_x, T(\gamma_{_{1}}\gamma_{_{2}}, \gamma_{_{3}})> \nonumber \\
&-& <c_x, T(\gamma_{_{2}}, \gamma_{_{3}}) -
\Big(\Psi_s(\sigma(\gamma_{_{1}}))\Big)_*(T)(\gamma_{_{2}},
\gamma_{_{3}})>\nonumber \\
&=& <c_x,  T(\gamma_{_{1}}, \gamma_{_{2}}\gamma_{_{3}})> -
<c_x, T(\gamma_{_{1}}, \gamma_{_{2}})>
\nonumber \\
&-& <c_x, T(\gamma_{_{1}}\gamma_{_{2}}, \gamma_{_{3}})>
+
<c_x, \Big(\Psi_s(\sigma(\gamma_{_{1}}))\Big)_*(T)(\gamma_{_{2}},
\gamma_{_{3}})>\nonumber \\
 &=& <c_x, dT(\gamma_{_{1}}, \gamma_{_{2}}, \gamma_{_{3}})>\nonumber
 \end{eqnarray}

Finalement, en passant  dans l'espace d'homologie $\ell_1$-r\'eduite
$\overline{H}_2^{\ell_1}(G, \mathbb R)$  nous  obtenons l'expression
recherch\'ee
 $\sigma^*(\overline{\theta}_{_{\Psi, f}}) =
 d\overline{T}$.\end{proof}

Le r\'esultat de la proposition 11 nous permet maintenant de d\'eduire les deux
corollaires importants suivants.

\begin{corollary}
La restriction  de la cocha\^\i ne born\'ee $\overline{T} : \Gamma^2\rightarrow
\overline{H}_2^{\ell_1}(G, \mathbb R)$  sur le sous-groupe normal $i(G)\subset
\Gamma$ repr\'esente la classe de cohomologie  born\'ee $\mathbf{g}_{_{2}}$,
$$[i^*(\overline{T})]=
[\overline{\mathbf{m}}_{_{2}}]=\mathbf{g}_{_{2}}\in H_b^2(G, \overline{H}_2^{\ell_1}(G, \mathbb R)).$$
\end{corollary}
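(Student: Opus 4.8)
The plan is to show directly that the restricted cochain $i^*(\overline{T})\colon G^2\to\overline{H}_2^{\ell_1}(G,\mathbb R)$ coincides, value by value, with $\overline{\mathbf{m}}_{_{2}}$ in $\overline{H}_2^{\ell_1}(G,\mathbb R)$; the equality of cohomology classes then follows at once. Since $i(G)=\ker(\sigma)$, for $g_1,g_2\in i(G)$ one has $\sigma(g_1)=\sigma(g_2)=1$, and the normalisations $s(1)=1$, $\Psi_s(1)=\mathrm{id}_G$ together with the property $h(g)=g$ for $g\in G$ reduce formula $(27)$ to
\[
i^*(\overline{T})(g_1,g_2)=\overline{\mathbf{m}}_{_{2}}(1,(g_1g_2)^{-1})-\overline{\mathbf{m}}_{_{2}}(g_2^{-1},g_1^{-1}),
\]
because $f(1,1)=s(1)s(1)s(1)^{-1}=1$ and $\Psi_s(1)(g_2^{-1})=g_2^{-1}$. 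Note also that the inner action of $G$ on $\overline{H}_2^{\ell_1}(G,\mathbb R)$ is trivial (Proposition~2), so that $i^*(\overline{T})$ really is a cochain with trivial coefficients, living in the same complex $C_b^2(G,\overline{H}_2^{\ell_1}(G,\mathbb R))$ as $\overline{\mathbf{m}}_{_{2}}$.

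First I would pair $i^*(\overline{T})(g_1,g_2)$ with an arbitrary class $x\in H_b^2(G,\mathbb R)$, represented by its unique homogeneous bounded $2$-cocycle $c_x$. Using the second assertion of Affirmation~5, namely $\langle c_x,\overline{\mathbf{m}}_{_{2}}(a,b)\rangle=c_x(a,b)$, together with the normalisation $c_x(1,\cdot)=0$, this yields $\langle c_x,i^*(\overline{T})(g_1,g_2)\rangle=-c_x(g_2^{-1},g_1^{-1})$.

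The key computational step, and the one I expect to carry the real content, is the antisymmetry identity $c_x(g,h)=-c_x(h^{-1},g^{-1})$. I would prove it through the homogeneous quasi-morphism $\varphi$ attached to $c_x$: from $s_x(g)s_x(h)=j(c_x(g,h))\,s_x(gh)$ and $\varphi\circ s_x=0$ one obtains $c_x(g,h)=\varphi(s_x(g)s_x(h))$, while Affirmation~4 gives $s_x(g^{-1})=s_x(g)^{-1}$, so that $c_x(h^{-1},g^{-1})=\varphi\big((s_x(g)s_x(h))^{-1}\big)$. Since $\varphi$ is homogeneous it is odd, $\varphi(u^{-1})=-\varphi(u)$, whence $c_x(h^{-1},g^{-1})=-c_x(g,h)$. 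Here it is crucial that $c_x$ is the homogeneous representative of its class (Proposition~5); the identity would fail for an arbitrary cocycle in the class, so this is exactly the point where the special structure of section~3.4 enters. Applying the identity with $(g,h)=(g_1,g_2)$ turns the previous line into $\langle c_x,i^*(\overline{T})(g_1,g_2)\rangle=c_x(g_1,g_2)=\langle c_x,\overline{\mathbf{m}}_{_{2}}(g_1,g_2)\rangle$.

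Finally I would conclude exactly as in the proof of Affirmation~6 and of the Théorème principal~A. For fixed $g_1,g_2$ the $2$-cycle $i^*(T)(g_1,g_2)-\mathbf{m}_{_{2}}(g_1,g_2)\in Z_2^{\ell_1}(G,\mathbb R)$ pairs to zero with every bounded $2$-cocycle $c_x$; were it not contained in the closed subspace $\overline{B_2^{\ell_1}(G,\mathbb R)}$, the Hahn–Banach separation theorem would furnish a bounded $2$-cocycle separating it, a contradiction. Hence $i^*(\overline{T})(g_1,g_2)=\overline{\mathbf{m}}_{_{2}}(g_1,g_2)$ in $\overline{H}_2^{\ell_1}(G,\mathbb R)$ for all $g_1,g_2\in G$, and therefore $[i^*(\overline{T})]=[\overline{\mathbf{m}}_{_{2}}]=\mathbf{g}_{_{2}}$, as claimed.
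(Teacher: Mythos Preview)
Your proof is correct and follows essentially the same route as the paper: restrict $\overline{T}$ to $i(G)$ using $\sigma(g)=1$, $s(1)=1$, $h(g)=g$ to obtain $i^*(\overline{T})(g_1,g_2)=-\overline{\mathbf m}_2(g_2^{-1},g_1^{-1})$ (your extra term $\overline{\mathbf m}_2(1,(g_1g_2)^{-1})$ vanishes by homogeneity), then use the antisymmetry $c_x(g_1,g_2)=-c_x(g_2^{-1},g_1^{-1})$ of the homogeneous representative together with the duality/Hahn--Banach argument to conclude equality in $\overline{H}_2^{\ell_1}(G,\mathbb R)$. Your derivation of the antisymmetry via $c_x(g,h)=\varphi(s_x(g)s_x(h))$ and oddness of $\varphi$ is in fact more carefully stated than the paper's, which simply quotes the relation.
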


\begin{proof}[D\'emonstration] En effet, puisque pour tout $g\in G$ on a $h(g)=g$ ;
donc en \'evaluant la cocha\^\i ne  $\overline{T} : \Gamma^2\rightarrow
\overline{H}_2^{\ell_1}(G, \mathbb R)$ sur tous les couples d'\'el\'ements
$(g_{_{1}}, g_{_{2}})\in i(G)\times i(G)\subset \Gamma\times \Gamma$ on voit
ais\'ement que $ i^*(\overline{T})(g_{_{1}}, g_{_{2}}) = -
\overline{\mathbf{m}}_{_{2}}(g_{_{2}}^{-1},  g_{_{1}}^{-1}).$

D'autre part, puisque pour toute classe de cohomologie born\'ee $x=[c_x]\in
H_b^2(G, \mathbb R)$ et pour tout couple d'\'el\'ements $g_{_{1}}$ et
$g_{_{2}}\in G$ on a les deux relations,
$$c_x(g_{_{1}}^{-1},  g_{_{2}}^{-1})=-c_x(g_{_{1}},  g_{_{2}}) \quad \textrm{ et } \quad
<c_x, \mathbf{m}_{_{2}}(g_{_{1}},  g_{_{2}})>=c_x(g_{_{1}},  g_{_{2}}) $$ on en
d\'eduit l'\'egalit\'e  $i^*(\overline{T})(g_{_{1}}, g_{_{2}}) =-
\overline{\mathbf{m}}_{_{2}}(g_{_{2}}^{-1},
g_{_{1}}^{-1})=\overline{\mathbf{m}}_{_{2}}(g_{_{1}}, g_{_{2}})$ qui entra\^\i
ne l'\'egalit\'e en cohomologie :
$[i^*(\overline{T})]=[\overline{\mathbf{m}}_{_{2}}]=\mathbf{g}_{_{2}}$.
\end{proof}

\begin{corollary}
La classe de cohomologie born\'ee $\mathbf{g}_{_{2}}\in H_b^2(G,
\overline{H}_2^{\ell_1}(G, \mathbb R))$ est invariante par l'action d\'efinie
par la repr\'esentation ext\'erieure $\theta : \Pi\rightarrow Out(G)$
associ\'ee \`a l'extension de groupes $1\rightarrow G\stackrel{i}{\rightarrow}
\Gamma\stackrel{\sigma}{\rightarrow}\Pi\rightarrow1$.
\end{corollary}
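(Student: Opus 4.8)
The plan is to exploit the strict functoriality of the representing cocycle $\overline{\mathbf{m}}_{_{2}}$ under automorphisms of $G$; no estimate is needed, only careful bookkeeping of the action. First I recall how $\Pi$ acts on $H_b^2(G,\overline{H}_2^{\ell_1}(G,\mathbb R))$. By Corollaire 2 this action is the one induced by conjugation in $\Gamma$: for $\alpha\in\Pi$ one chooses a representative automorphism $\Psi(\alpha)\in\mathrm{Aut}(G)$ (as supplied by the set-theoretic section $s:\Pi\to\Gamma$ of the extension) and lets it act simultaneously on the arguments of a cochain and, through $\Psi(\alpha)_*$, on the coefficient space $\overline{H}_2^{\ell_1}(G,\mathbb R)$. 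On a cocycle $c:G^2\to\overline{H}_2^{\ell_1}(G,\mathbb R)$ this reads $(\alpha\cdot c)(g,h)=\Psi(\alpha)_*\,c(\Psi(\alpha)^{-1}(g),\Psi(\alpha)^{-1}(h))$. Proposition 2 guarantees that $\Psi(\alpha)_*$ is insensitive to the inner part of $\Psi(\alpha)$, and Proposition 3 gives the same for the action on $H_b^2(G,-)$; hence the action is well defined and depends only on $\theta(\alpha)\in\mathrm{Out}(G)$.

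By the th\'eor\`eme principal A (equivalently, by Corollaire 5) the class $\mathbf{g}_{_{2}}$ is represented by $\overline{\mathbf{m}}_{_{2}}$, so it suffices to show that this cocycle is fixed by each $\alpha\cdot(-)$. The crucial point is a naturality statement: for every $\phi\in\mathrm{Aut}(G)$ the chain automorphism $\phi_*$ of $C_*^{\ell_1}(G,\mathbb R)$ satisfies $\phi_*(\mathbf{m}(g))=\mathbf{m}(\phi(g))$, which is immediate from the defining series (5) because $\phi_*[g^{2^{n-1}}\mid g^{2^{n-1}}]=[\phi(g)^{2^{n-1}}\mid \phi(g)^{2^{n-1}}]$. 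Since moreover $\phi_*$ commutes with $\partial_2$ and carries $(g,h)$ to $(\phi(g),\phi(h))$, formula (13) at once yields $\phi_*(\mathbf{m}_{_{2}}(g,h))=\mathbf{m}_{_{2}}(\phi(g),\phi(h))$, and passing to reduced $\ell_1$-homology $\phi_*(\overline{\mathbf{m}}_{_{2}}(g,h))=\overline{\mathbf{m}}_{_{2}}(\phi(g),\phi(h))$.

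Specializing this identity to $\phi=\Psi(\alpha)$ and inserting it into the action formula gives
$$(\alpha\cdot\overline{\mathbf{m}}_{_{2}})(g,h)=\Psi(\alpha)_*\,\overline{\mathbf{m}}_{_{2}}(\Psi(\alpha)^{-1}(g),\Psi(\alpha)^{-1}(h))=\overline{\mathbf{m}}_{_{2}}(g,h),$$
so that the representing cocycle is \emph{literally} fixed, and therefore $\mathbf{g}_{_{2}}$ is $\Pi$-invariant. The main obstacle is thus not computational but conceptual: one must pin down that the simultaneous transformation of arguments and coefficients is exactly the conjugation action of Corollaire 2, and that changing the lift $\Psi(\alpha)$ by an inner automorphism does not alter it — both settled by Propositions 2 and 3, after which the invariance is purely formal from the functoriality of $\overline{\mathbf{m}}_{_{2}}$. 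It is worth stressing that the invariance genuinely requires this functoriality and cannot be obtained from Proposition 4 by writing $\mathbf{g}_{_{2}}$ as a restriction $i_b(y)$ of a class on $\Gamma$: the cochain $\overline{T}$ of Proposition 11 is not a $\Gamma$-cocycle (its coboundary is $\sigma^*(\overline{\theta}_{_{\Psi,f}})$), and in fact $\mathbf{g}_{_{2}}$ need not lie in the image of $i_b$, as the ensuing computation of $d_{_{3}}$ will make clear.
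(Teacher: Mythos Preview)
Your argument is correct and takes a genuinely different route from the paper's.

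The paper does \emph{not} argue via functoriality of $\overline{\mathbf{m}}_{_{2}}$. Instead it uses the cochain $\overline{T}:\Gamma^2\to\overline{H}_2^{\ell_1}(G,\mathbb R)$ from Proposition~11: viewing $\overline{T}$ as a $\Gamma$-invariant homogeneous $3$-cochain and pushing it through the restriction
\[
(\mathcal{L}(\mathbb R[\Gamma^3],\overline{H}_2^{\ell_1}(G,\mathbb R)))^\Gamma
\hookrightarrow
(\mathcal{L}(\mathbb R[\Gamma^3],\overline{H}_2^{\ell_1}(G,\mathbb R)))^G
\stackrel{i^*}{\longrightarrow}
(\mathcal{L}(\mathbb R[G^3],\overline{H}_2^{\ell_1}(G,\mathbb R)))^G,
\]
one obtains a $\Pi$-invariant cochain on $G$, which by Corollaire~5 equals $\overline{\mathbf{m}}_{_{2}}$. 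This is precisely the mechanism of Proposition~4, but run at the level of cochains rather than cohomology classes --- exactly because, as you note, $\overline{T}$ is not a cocycle on $\Gamma$ and $\mathbf{g}_{_{2}}$ need not lie in $\mathrm{Im}(i_b)$.

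Your approach is more elementary and in a sense stronger: the identity $\phi_*\overline{\mathbf{m}}_{_{2}}(g,h)=\overline{\mathbf{m}}_{_{2}}(\phi(g),\phi(h))$ holds for \emph{every} automorphism $\phi$ of $G$, so $\mathbf{g}_{_{2}}$ is fixed by the full $\mathrm{Out}(G)$-action, not merely by the image of $\theta$; in particular no extension of $G$ by $\Pi$ is needed. The paper's route, while less direct, is tailored to what comes next: by exhibiting $\overline{\mathbf{m}}_{_{2}}$ as the restriction of a $\Gamma$-cochain whose coboundary is already known to be $\sigma^*(\overline{\theta}_{_{\Psi,f}})$, it sets up the computation of $d_{_{3,\ell_1}}(\mathbf{g}_{_{2}})$ in the th\'eor\`eme principal~B in one stroke.
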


\begin{proof}[D\'emonstration] On proc\`ede comme dans la d\'emonstration de la
proposition 4 rappel\'ee ci-dessus (cf. section 3) et qui correspond au
corollaire 2 de \cite{Bou3}.

Plus pr\'ecis\'ement, il suffit qu'on regarde la $2$-cocha\^\i ne born\'ee
$\overline{T} : \Gamma^2 \rightarrow \overline{H}_2^{\ell_1}(G, \mathbb R)$ qui
est d\'efinie par l'expression (27) comme \'etant une $3$-cocha\^\i ne
homog\`ene $\Gamma$-invariante~:
$$
\overline{T}\in(\mathcal{L}(\mathbb R[\Gamma^3],
\overline{H}_2^{\ell_1}(G, \mathbb R)))^\Gamma \simeq \mathcal{L}(\mathbb R[\Gamma^2],
\overline{H}_2^{\ell_1}(G, \mathbb R))$$

Ensuite, en utilisant l'homomorphisme compos\'e
$$
\begin{array}
{ccccccccc}
(\mathcal{L}(\mathbb R[\Gamma^3], \overline{H}_2^{\ell_1}(G, \mathbb R)))^\Gamma
&\hookrightarrow&(\mathcal{L}(\mathbb R[\Gamma^3], \overline{H}_2^{\ell_1}(G, \mathbb
R)))^G&  \\
&\stackrel{i^*}{\rightarrow}& (\mathcal{L}(\mathbb R[G^3],
\overline{H}_2^{\ell_1}(G, \mathbb R)))^G&\simeq \mathcal{L}(\mathbb R[G^2],
\overline{H}_2^{\ell_1}(G, \mathbb R))
\end{array}
$$
on  d\'eduit que l'image  de la cocha\^\i ne $\overline{T}$ {\it via } la
composition de ces deux morphismes induit  une $2$-cocha\^\i ne
$\Pi$-invariante sur le groupe $G$. Ainsi, comme d'apr\`es le corollaire 5 on
sait que $i^*(\overline{T})= \overline{\mathbf{m}}_{_{2}}$ est un $2$-cocycle
ceci implique  que finalement la classe de cohomologie born\'ee
$\mathbf{g}_{_{2}}=[\overline{\mathbf{m}}_{_{2}}]\in H_b^2(G,
\overline{H}_2^{\ell_1}(G, \mathbb R))^\Pi$ est $\Pi$-invariante.
\end{proof}

Si on d\'esigne par $(E_{_{r, \ell_1}}^{p, q}, d_{_{r, \ell_1}})$ la suite
sp\'ectrale de Hochschild-Serre associ\'ee \`a l'extension $1 \longrightarrow
G\stackrel{i}{\longrightarrow}\Gamma\stackrel{\sigma}{\longrightarrow}
\Pi\longrightarrow 1$  en cohomologie born\'ee \`a coefficients dans le
$\Pi$-module de Banach $\overline{H}_2^{\ell_1}(G, \mathbb R)$, on voit  que la
classe  $\mathbf{g}_{_{2}}$ induit un \'el\'ement du terme $E_{_{3,
\ell_1}}^{0, 2}$, et que la classe  $[\theta]$ induit aussi un \'el\'ement du
terme $E_{_{3, \ell_1}}^{3, 0}$.

Pour justifier  ces deux faits  observons que puisque le terme $E_{_{2, \ell_1}}^{0,2}=E_{_{3, \ell_1}}^{0,2}$ (cf. lemme 4) et puisque la classe de cohomologie
$\mathbf{g}_{_{2}}\in E_{_{2, \ell_1}}^{0,2}\stackrel{\sim}{\rightarrow}H_b^2(G,\overline{H}_2^{\ell_1}(G,
\mathbb R))^\Pi$, donc $\mathbf{g}_{_{2}}$ repr\'esente une classe de cohomologie qu'on notera aussi
$\mathbf{g}_{_{2}}\in E_{_{3, \ell_1}}^{0, 2}$. De m\^eme,
puisque le terme $E_{_{2, \ell_1}}^{3, 0}=E_{_{3, \ell_1}}^{3, 0}$ (cf. lemme 4) avec $E_{_{2, \ell_1}}^{3,
0}\stackrel{\sim}{\rightarrow}H_b^3(\Pi, \overline{H}_2^{\ell_1}(G, \mathbb R))$ donc la classe de cohomologie $[\theta]\in H_b^3(\Pi, \overline{H}_2^{\ell_1}(G, \mathbb R))$  repr\'esente une classe de cohomologie qu'on notera aussi
$[\theta]\in E_{_{3, \ell_1}}^{3, 0}$.

Rappelons aussi que d'apr\`es la d\'efinition  des termes $E_3^{n, 0}$ et $E_3^{n, 2}$ (cf. 3.2.1) on a~:
$$ E_3^{n, 0} = \displaystyle\frac{Z_3^{n, 0}}{Z_2^{n+1, -1} + B_2^{n, 0}}  \quad \textrm{ et } \quad E_3^{n, 2} =
  \displaystyle\frac{Z_3^{n, 2}}{Z_2^{n+1, 1} + B_2^{n,2}}$$

Ainsi, puisque la diff\'erentielle totale $d_* = d_\Pi + (-1)^pd_U$ du complexe diff\'erentiel double  filt\'e verticalement,  $$ \forall p, q\in\mathbb N, \quad K^{p, q}:=C_b^p(\Pi, U^q) \qquad \textrm{ o\`u }\qquad U^q:= {\mathcal L}_{G}
(\mathbb R[\Gamma^{q+1}], V)$$
 envoie  le sous-espace  $Z_3^{n, 2}$ dans le sous-espace  $Z_3^{n+3, 0}$ (i.e. $d_{_{n+2}}(Z_3^{n, 2}) \subseteq Z_3^{n+3, 0}$) nous avons d\'efini
la diff\'erentielle $d_3^{n,2} : E_3^{n, 2}\longrightarrow E_3^{n+3, 0}$  en  passant  aux espaces  quotients par l'expression (cf. 3.2.1), $$\forall x\in Z_3^{n, 2}, \qquad d_3^{n,2}([x]) := [d_{_{n+2}}(x)]$$

Donc, si en particulier on prend $V=\overline{H}_2^{\ell_1}(G, \mathbb R)$ et $n=0$ on voit  que la diff\'erentielle   $d_{_{3, \ell_1}}^{0,2} : E_{_3, \ell_1}^{0,2}\longrightarrow E_{_{3, \ell_1}}^{3, 0}$ est induite par la diff\'erenielle totale $d_3 : Z_3^{0, 2} \longrightarrow Z_3^{3, 0}$ o\`u
\begin{eqnarray*}Z_3^{0, 2} &:=& \{x\in F_v^0\textrm{Tot}(K^{*,*})^{2} \ ; d_{_{2}}(x)\in F_v^{3}\textrm{Tot}(K^{*,*})^{3}\}\\
&=&\{x\in \textrm{Tot}(K^{*,*})^{2}\ ; d_{_{2}}(x)\in C_b^3(\Pi, U^0)\}\end{eqnarray*}
et
\begin{eqnarray*}Z_3^{3, 0}&:=& \{x\in F_v^3\textrm{Tot}(K^{*,*})^{3} \ ; d_{_{3}}(x)\in F_v^{6}\textrm{Tot}(K^{*,*})^{4}\} \\
&=& \{x\in C_b^3(\Pi, U^0)\ ; d_{_{3}}(x)=0\}\end{eqnarray*}

Avec les discussions pr\'ec\'edentes on peut maintenant d\'emontrer le th\'eor\`eme principal B.

\newtheorem*{prB}{Th\'eor\`eme principal B}\begin{prB}
La diff\'erentielle $d_{_{3, \ell_1}}^{0,2} : E_{_3, \ell_1}^{0,2}\longrightarrow E_{_{3, \ell_1}}^{3, 0}$ de la
suite spectrale de Hochschild-Serre associ\'ee \`a l'extension de groupes
discrets $1 \longrightarrow
G\stackrel{i}{\longrightarrow}\Gamma\stackrel{\sigma}{\longrightarrow}
\Pi\longrightarrow 1$ en cohomologie born\'ee \`a coefficients dans le
$\Pi$-module de Banach $\overline{H}_2^{\ell_1}(G, \mathbb R)$,  envoie la
classe $\mathbf{g}_{_{2}}\in E_{_{3, \ell_1}}^{0,2}$  sur la classe $[\theta]\in
E_{_{3, \ell_1}}^{3, 0}$.
\end{prB}

\begin{proof}[D\'emonstration] D'abord,  notons que la cocha\^\i ne
    born\'ee $\overline{T} : \Gamma^2\rightarrow \overline{H}_2^{\ell_1}(G,
    \mathbb R)$ (cf. pr. 11) peut \^etre vue comme \'el\'ement de l'espace vectoriel $C_b^0(\Pi, U^2)$  parce que on a~:
$$
\overline{T}\in\mathcal{L}(\mathbb R[\Gamma^2],
\overline{H}_2^{\ell_1}(G, \mathbb R))\simeq (\mathcal{L}(\mathbb R[\Gamma^3],
\overline{H}_2^{\ell_1}(G, \mathbb R)))^\Gamma \Longrightarrow \overline{T}\in  (\mathcal{L}(\mathbb R[\Gamma^3], \overline{H}_2^{\ell_1}(G, \mathbb
R)))^G=U^2$$

Ainsi,  comme le cobord   $d_{_{2}}\overline{T}=\sigma^*(\overline{\theta}_{_{\Psi, f}})$ (cf. pr. 11) induit un \'el\'ement de l'espace
vectoriel $C_b^3(\Pi, U^0)$ il s'ensuit  que  la cocha\^\i ne born\'ee  $\overline{T}\in Z_3^{0, 2}$, son cobord $d_{_{2}}\overline{T}\in Z_3^{3, 0}$  et que par cons\'equent
$$d_{_{3, \ell_1}}^{0, 2}([\overline{T}]) = [d_{_{2}}\overline{T}]\in E_{_{3, \ell_1}}^{3, 0}$$

D'autre part, puisque  d'apr\`es les corollaires 5 et 6,  la restriction de la cocha\^\i ne
    born\'ee $\overline{T} : \Gamma^2\rightarrow \overline{H}_2^{\ell_1}(G,
    \mathbb R)$ au sous-groupe normal $i(G)\subset \Gamma$ repr\'esente la
    classe de cohomologie born\'ee $\Pi$-invariante $\mathbf{g}_{_{2}}\in E_{3, \ell_1}^{0,
    2}=E_{2, \ell_1}^{0, 2}\stackrel{\sim}{\rightarrow}
    H_b^2(G, \overline{H}_2^{\ell_1}(G, \mathbb R))^\Pi$, et comme d'apr\`es la proposition 10 le cobord de la cocha\^\i ne
    born\'ee $\overline{T} : \Gamma^2\rightarrow \overline{H}_2^{\ell_1}(G,
    \mathbb R)$  repr\'esente la classe de cohomologie born\'ee $[\theta]\in E_{_{3, \ell_1}}^{3, 0}\stackrel{\sim}{\rightarrow}
    H_b^3(\Pi, \overline{H}_2^{\ell_1}(G, \mathbb R))$ on conclut finalement que la diff\'erentielle  $d_{3,
\ell_1}(\mathbf{g}_{_{2}}) = [\theta]\in E_{3, \ell_1}^{3,
0}$.
 \end{proof}

\subsection{Preuve du th\'eor\`eme principal C}
Soit $1 \longrightarrow
G\stackrel{i}{\longrightarrow}\Gamma\stackrel{\sigma}{\longrightarrow}
\Pi\longrightarrow 1$ une extension de groupes discrets et $\theta :
\Pi\rightarrow Out(G)$ sa repr\'esentation ext\'erieure. D'apr\`es \cite{Bou3},
il existe  trois suites spectrales de Hochschild-Serre en cohomologie born\'ee
\`a coefficients dans les  $\Pi$-modules de Banach $H_b^2(G, \mathbb R)$,
$\overline{H}_2^{\ell_1}(G, \mathbb R)$ et $\mathbb R$ (qui est trivial) dont
les termes sont d\'esign\'es respectivement par $(E_{_{r, \infty}}^{p, q},
d_{_{r, \infty}}^{p, q})$, $(E_{_{r, \ell_1}}^{p, q}, d_{_{r, \ell_1}}^{p, q})$ et
$(E_{_{r}}^{p, q}, d_{_{r}}^{p, q})$. Rappelons aussi qu'au paragraphe 3.3 nous avons
fait remarquer que les diff\'erentielles de ces trois suites spectrales
commutent avec le cup-produit dans la relation suivante,
\begin{eqnarray}
d_r^{p+p', q+q'}(x_{_{\infty}}^{p, q}\cup x_{_{\ell_1}}^{p', q'}) =
d_{_{r, \infty}}^{p, q}(x_{_{\infty}}^{p, q})\cup x_{_{\ell_1}}^{p', q'} +
(-1)^{p+q}x_{_{\infty}}^{p, q}\cup d_{_{r, \ell_1}}^{p', q'}(x_{_{\ell_1}}^{p',
q'}).\nonumber
\end{eqnarray}

De m\^eme, notons que l'identit\'e \'etablie dans le th\'eor\`eme principal A,
$$x = x\cup \mathbf{g}_{_{2}}, \qquad \forall x\in H_b^2(G, \mathbb R) $$ permet  d'obtenir un isomorphisme canonique
\begin{displaymath}
\begin{array}
{cccccc}
\cup\mathbf{g}_{_{2}} : &E_{_{2, \infty}}^{n, 0}&\rightarrow& E_{_{2}}^{n,
2}\\
 &x_{_{\infty}}& \rightarrow & x_{_{\infty}}\cup \mathbf{g}_{_{2}}
\end{array}
\end{displaymath}
dont le morphisme inverse associe \`a tout  vecteur $x\in E_{_{2}}^{n, 2}$  un
unique vecteur $x_{_{\infty}}\in E_{_{2, \infty}}^{n, 0} $ tel que, $x =
x_{_{\infty}}\cup \mathbf{g}_{_{2}}$.

Finalement, observons que puisque en cohomologie born\'ee r\'eelle le terme
$E_{_{2}}^{n, 2}$ se surjecte sur le terme $E_{_{3}}^{n, 2}$ (cf. lemme 3) et comme on a
aussi $E_{_{2}}^{n+3, 0}=E_{_{3}}^{n+3, 0}$ (cf. lemme 3),  on en d\'eduit que la
diff\'erentielle $d_3^{n, 2} : E_3^{n, 2}\rightarrow E_3^{n+3, 0}$ transforme
l'expression $x = x_{_{\infty}}\cup \mathbf{g}_{_{2}}$ comme suit,
\begin{eqnarray}
d_{_{3}}^{n, 2}(x) &=& d_{_{3}}^{n, 2}(x_{_{\infty}}\cup \mathbf{g}_{_{2}}) \nonumber \\
&=& d_{_{3, \infty}}^{n, 0}(x_{_{\infty}})\cup\mathbf{g}_{_{2}} +
(-1)^nx_{_{\infty}}\cup d_{_{3, \ell_1}}^{0, 2}(\mathbf{g}_{_{2}}) \nonumber \\
&=&0\cup\mathbf{g}_{_{2}}
+(-1)^nx_{_{\infty}}\cup [\theta] = (-1)^nx\cup[\theta].\nonumber
\end{eqnarray}

\newtheorem*{CA}{Corollaire A}\begin{CA}
L'op\'erateur de transgression $\delta : H_b^2(G, \mathbb R)^\Pi\rightarrow
H_b^3(\Pi, \mathbb R)$ associ\'e \`a la repr\'esentation ext\'erieure $\theta :
\Pi\rightarrow Out(G)$ de l'extension $1 \longrightarrow
G\stackrel{i}{\longrightarrow}\Gamma\stackrel{\sigma}{\longrightarrow}
\Pi\longrightarrow 1$ est \'egal \`a la diff\'erentielle $d_{_{3}}^{0,2} :
E_{_{3}}^{0, 2}\rightarrow E_{_{3}}^{3, 0}$.
\end{CA}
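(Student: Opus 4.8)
The plan is to obtain this corollary as a direct specialization of the Théorème principal C to $n=0$, combined with the explicit form of the transgression $\delta$ recalled after Corollaire 4. Once the canonical identifications of the Lemme 3 are in place, both $\delta$ and $d_3^{0,2}$ are maps $H_b^2(G,\mathbb R)^\Pi\to H_b^3(\Pi,\mathbb R)$, so everything reduces to showing that, for $n=0$, the cup-product expression furnished by the Théorème principal C coincides with the composition cocycle defining $\delta$.

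First I would record the identifications. By the Lemme 3, assertion (2), the term $E_3^{0,2}=E_2^{0,2}$ is canonically and continuously bijective onto $H_b^0(\Pi,H_b^2(G,\mathbb R))=H_b^2(G,\mathbb R)^\Pi$, and by assertion (1) one has $E_3^{3,0}=E_2^{3,0}\xrightarrow{\sim}H_b^3(\Pi,\mathbb R)$. Under these identifications $d_3^{0,2}$ and $\delta$ share source and target, and the Théorème principal C at $n=0$ gives
$$d_3^{0,2}(x)=x\cup[\theta],\qquad \forall x\in E_3^{0,2},$$
the cup-product pairing $x\in H_b^0(\Pi,H_b^2(G,\mathbb R))$ with $[\theta]\in H_b^3(\Pi,\overline{H}_2^{\ell_1}(G,\mathbb R))$ through the duality bracket and landing in $H_b^3(\Pi,\mathbb R)$.

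Next I would compute $x\cup[\theta]$ at the cochain level. Represent $x$ by the homogeneous bounded $2$-cocycle $c_x$ of the Proposition 5, regarded as the constant $0$-cochain on $\Pi$ valued in $H_b^2(G,\mathbb R)$, and represent $[\theta]$ by the $3$-cocycle $\overline\theta_{\Psi,f}$ of (23). For $p=0$, $q=3$ the cup-product formula (6) collapses, the empty product $g_1\cdots g_p$ being the identity, to $(f\cup h)([\alpha|\beta|\gamma])=\mu(f([])\otimes h([\alpha|\beta|\gamma]))$ with $\mu=\langle\cdot,\cdot\rangle$. Hence $x\cup[\theta]$ is represented by the real $3$-cochain $(\alpha,\beta,\gamma)\mapsto\langle c_x,\overline\theta_{\Psi,f}(\alpha,\beta,\gamma)\rangle=\langle c_x,\theta_{\Psi,f}(\alpha,\beta,\gamma)\rangle$, the last equality because the bracket of $c_x$ with a reduced $\ell_1$-homology class is computed on any $\ell_1$-cycle representative.

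Finally I would invoke formula (24), which identifies this bracket with the composition cochain, $\langle c_x,\theta_{\Psi,f}(\alpha,\beta,\gamma)\rangle=c_{x,\overline\Psi}(\alpha,\beta,\gamma)$. Since $\delta(x)=[\varphi_*(K_{x,\overline\Psi})]=[c_{x,\overline\Psi}]$ by the description recalled after Corollaire 4, one gets $d_3^{0,2}(x)=x\cup[\theta]=[c_{x,\overline\Psi}]=\delta(x)$, the asserted equality. There is no analytic difficulty; the only point to watch is the bookkeeping of the bijections of the Lemme 3, that is, verifying that for $n=0$ the cup-product on the $E_3$-page coming from the double-complex pairing of the paragraphe 3.3 reduces to the genuine bounded-cohomology cup-product of formula (6), after which the proof is a one-line substitution.
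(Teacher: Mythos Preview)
Your proposal is correct and follows essentially the same route as the paper: apply Th\'eor\`eme principal~C at $n=0$ to get $d_3^{0,2}(x)=x\cup[\theta]$, then compute this cup-product at the cochain level as $\langle c_x,\theta_{\Psi,f}\rangle$ via formula~(24), which coincides with the composition cocycle $c_{x,\overline\Psi}=\varphi_*(K_{x,\overline\Psi})$ representing $\delta(x)$. The paper's proof is terser but identical in substance; your extra care with the identifications of Lemme~3 and the specialization of the cup-product formula~(6) at $p=0$ is appropriate.
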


\begin{proof}[D\'emonstration] Il suffit de remarquer que d'apr\`es la proposition 7 et l'expression
(22) (cf. cor. 4), si  $c_x  : G^2\rightarrow \mathbb R$ d\'esigne un $2$-cocycle born\'e
homog\`ene  invariant par
 $\theta : \Pi\rightarrow Out(G)$ il en r\'esulte
que pour tout noyau abstrait $(\Psi, f)$ de $\theta$ l'expression suivante (cf.
(20)),
$$\varphi_*(K_{_{x, \overline{\Psi}}})= <c_x, \theta_{_{\Psi, f}}(\alpha, \beta, \gamma)>=
c_x(\Psi(\alpha)(f(\beta, \gamma), f(\alpha, \beta\gamma)) -
c_x(f(\alpha, \beta), f(\alpha\beta, \gamma))$$
d\'efinit un $3$-cocycle r\'eel born\'e sur le groupe $\Pi$ qui repr\'esente
\`a la fois les classes de cohomologie born\'ee :
$\delta([c_x])=[\varphi_*(K_{_{x, \overline{\Psi}}})]$ (cf. \cite{Bou} et
\cite{Bou1}) et $d_3([c_x])=(-1)^0[c_x]\cup [\theta]$.  \end{proof}

\noindent{\bf Remerciement-. } Je remercie le r\'ef\'er\'e anonyme de l'article pour ses pr\'ecieuses remarques qui m'ont permis de refaire l'article et de le completer par des paragraphes visant \`a clarifier  les passages de certaines d\'emonstrations,  et par cons\'equent  rendent  le contenu de l'article ind\'ependant et auto suffisant. Je tiens aussi \`a le remercier pour ses quesions int\'eressantes qui seront d\'evelopp\'ees dans de futures papiers.

\end{document}